\documentclass[reqno]{amsart}

\usepackage{changes}

\usepackage[round,comma]{natbib}                %

\usepackage{amsmath, amsthm}
\usepackage{amsfonts,amssymb,dsfont}
\usepackage{nicefrac,mathrsfs}
\usepackage{bm}                                 %
\usepackage[backgroundcolor=white,bordercolor=orange]{todonotes}

\newcommand{\cA}{{\mathcal A}}  
\newcommand{\ccL}{{\mathscr L}}
\newcommand{\ccB}{{\mathscr B}}
\newcommand{\ccO}{{\mathscr O}}

\newcommand{\ccF}{{\mathscr F}}\newcommand{\cF}{{\mathcal F}}
\newcommand{\cG}{{\mathcal G}}
\newcommand{\cH}{{\mathcal H}}

\newcommand{\cQ}{{\mathcal Q}}
\newcommand{\cK}{{\mathcal K}}

\newcommand{\cX}{{\mathcal X}}

\newcommand{\cT}{{\mathcal T}}

\newcommand{\Ind}{{\mathds 1}}
\newcommand{\ind}[1]{\Ind_{\{#1\}}}

\newcommand{\R}{\mathbb{R}}

\newcommand{\bbF}{\mathbb{F}}

\newtheorem{theorem}{Theorem}[section]
\newtheorem{lemma}[theorem]{Lemma}              %
\newtheorem{proposition}[theorem]{Proposition}  %
\newtheorem{remark}[theorem]{Remark}  %
\newtheorem{assumption}[theorem]{Assumption}

\theoremstyle{definition}
\newtheorem{example}[theorem]{Example}
\newtheorem{definition}[theorem]{Definition}

\usepackage{enumerate,overpic,booktabs}

\usepackage{bbm}
\usepackage{stmaryrd}

\usepackage[colorlinks,urlcolor=red,citecolor=blue,linkcolor=red]{hyperref}

\newcommand{\fP}{\mathfrak{P}}
\DeclareMathOperator{\sem}{sem}

\DeclareMathOperator{\ac}{ac} 

\usepackage{geometry}
\geometry{a4paper,hcentering,vcentering,outer=3.5cm,top=2.5cm}

\definecolor{darkgreen}{rgb}{0,0.6,0}
\definecolor{col2}{rgb}{0,0,0.6}
\definecolor{col3}{rgb}{.50,0,98}
\definecolor{col4}{rgb}{.60,0,0}
\makeatletter
\@ifpackagelater{changes}{2012/01/11}{
  \definechangesauthor[color=green]{Green}
  \definechangesauthor[name={Thorsten Schmidt}, color=darkgreen]{ts}
 }{
  \definechangesauthor[Thorsten Schmidt]{ts}{darkgreen}  %
   \setauthormarkup[right]{}                       %
 }
 \makeatother

\begin{document}

\title{Affine processes under parameter uncertainty}

\author{Tolulope Fadina}
   \address{Freiburg University, www.stochastik.uni-freiburg.de}\email{tolulope.fadina@stochastik.uni-freiburg.de}
\author{Ariel Neufeld}
   \address{Nanyang Technological University, Division of Mathematical Sciences, Singapore} \email{ariel.neufeld@ntu.edu.sg}
   \author {Thorsten Schmidt}
\address{Freiburg  Institute of Advanced Studies (FRIAS), Germany. 
 University of Strasbourg Institute for Advanced Study (USIAS), France. 
 Department of Mathematical Stochastics, University of Freiburg, Eckerstr.1, 79104 Freiburg, Germany.}
 \email{thorsten.schmidt@stochastik.uni-freiburg.de}
 \date{\today}

\thanks{\textbf{Acknowledgements:} We want to thank  Tom Bielecki, Sam Cohen, Benedikt Geuchen, Mete Soner, Josef Teichmann, the participants of the ICERM workshop on Robust Methods in Probability and Finance,  the participants of the FWZ seminar and two anonymous referees for helpful comments. The numerical experiments were implemented by Jan Blechschmidt, TU Chemnitz. Financial support by Carl-Zeiss-Stiftung and the DFG, as well as ETH RiskLab  and the NAP Grant from NTU is gratefully acknowledged.  We also thank the 
Freiburg  Institute of Advanced Studies (FRIAS) for its hospitality and financial support.}

\maketitle

\begin{abstract}
We develop a one-dimensional  notion of  affine processes under parameter uncertainty, which we call \emph{non-linear affine processes}. This is done as follows:  given a set $\Theta$ of parameters for the process, we construct a corresponding non-linear expectation on the path space of continuous processes. By a general dynamic programming principle we link this non-linear expectation to a variational form of the Kolmogorov equation, where the generator of a single affine process is replaced by the supremum over all corresponding generators of affine processes with parameters in $\Theta$. This non-linear affine process yields a tractable model for Knightian uncertainty, especially for modelling interest rates under ambiguity. 

We then develop an appropriate It\^o-formula, the respective term-structure equations and study the non-linear versions of the Vasi\v cek and the Cox-Ingersoll-Ross (CIR) model. Thereafter we introduce the non-linear Vasi\v cek-CIR model. 
This model is particularly suitable for modelling interest rates when one does not want to restrict the state space a priori and hence the approach solves this modelling issue arising with negative interest rates.

\noindent
{\bf Keywords:} affine processes,  Knightian uncertainty, Riccati equation, Vasi\v cek model,  Cox-Ingersoll-Ross model, non-linear Vasi\v cek/CIR model, Heston model, It\^o-formula, Kolmogorov equation, fully nonlinear PDE, semimartingale.
	
\end{abstract}

\newcommand {\PP}{\mathbb P}
\newcommand {\QQ}{\mathbb Q}

\newcommand {\half}{\frac{1}{2}}

\section{Introduction} 

The modelling of a dynamic and unpredictable phenomenon like stock markets or interest rate markets 
is often approached via chosing an appropriate stochastic model. 
In many cases, the choice of the model is a delicate and difficult question. 
In complex dynamic environments like financial markets it is rather the rule than the exception that
unforeseen events lead to difficulties with the a-priori chosen model and improvements of the model 
have to be developed and implemented.

A promiment example in this direction is the role of affine short-rate models in the last 20 years: while around 2000, the property of the Vasi\v cek model that interest rates can become negative was heavily critizied and the non-negative Cox-Ingersoll-Ross (CIR) model was preferred, the consequences of the financial crises 2007-2008 leading to negative interest rates in the Euro zone rendered the CIR model no longer applicable and led to a renaissance of the Vasi\v cek model. 

This example illustrates the important question of \emph{model uncertainty}, which is  one of the most important topics in applied sciences and in particular plays a prominent role in finance, not only since the financial crisis.
The apparent risk of losses due to model mis-specification, called \emph{model risk} fostered the development of strategies which are \emph{robust} against model risk, typically leading to non-linear pricing rules. 
These robust strategies play a prominent role in the literature, see \cite{Martini,cont-06,eberlein2014bid,madan2016benchmarking,AcciaioBeiglbockPenknerSchachermayer.16,muhle2018risk,bielecki2018arbitrage} and the book \cite{guyon2013nonlinear}, to name just a few references in this direction.

A key observation in these works is that the single probability measure used in the classical approaches to specify a model has to be replaced by a family of probability measures (i.e.~a full class of models). 
Such an approach is very natural from the statistical viewpoint: when a model has certain parameters to be estimated, the estimators carry statistical uncertainty and  one considers confidence intervals instead, corresponding to a family of probability measures. The latter formulation of model risk is typically referred to as \emph{parameter uncertainty}, see \cite{AvellanedaLevyParas.95,wilmott1998uncertain,FouqueRen2014}, and is a major motivation for our research.

Examples in this direction are the notions of $g$-Brownian motion and $G$-Brownian motion referring to a Brownian motion with drift or volatility uncertainty, see \cite{Peng.97,D-Peng,PengG} and references therein. Most recently, this theory has been extended to more general approaches, so-called non-linear L\'evy processes, see \cite{NeufeldNutz2017} and \cite{DenkKupperNendel2017} in this regard.

Here we generalize this notion to affine processes under parameter uncertainty (called non-linear affine processes).
While a classical affine process corresponds to a single semimartingale law, we represent 
the  affine process under parameter uncertainty by a \emph{family} of  semimartingale laws whose differential characteristics are bounded from above and below by affine functions of the current states. Nonlinear L\'evy processes constitute the special case where the bounds do not depend on the state of the process. 
It seems important to stretch that for affine processes the bounds on drift and volatility are allowed to depend on the state of the process (in an affine way, however).
On the contrary, this state dependence leads to a number of additional difficulties and we therefore restrict ourselves to the simplest case, namely the one-dimensional case without jumps. 

It is our aim to provide the appropriate tools for incorporating parameter uncertainty in the prominent class of affine models. This naturally leads to a non-linear version of affine processes and associated non-linear expectations. 
After having established a dynamic programming principle, we establish the connection to the non-linear Kolmogorov equation.  
This allows us to study a number of interesting further steps, a non-linear version of the It\^o-formula, and non-linear affine term structure equations.

We also provide a number of examples: besides a non-linear variant of the Black-Scholes model and non-linear Vasi\v cek and Cox-Ingersoll-Ross (CIR) models we also introduce a non-linear Vasi\v cek-CIR model. In the latter model one can incorporate negative interest rates in combination with a CIR-like behaviour, solving the problem raised in many practical applications when the state space needed to be restricted to positive interest rates (see \cite{Carver2012}).

The paper is organized as follows: in Section \ref{sec:setup} we introduce non-linear affine processes. In Section \ref{sec:DPP-PDE} we prove that a dynamic programming principle holds. Section \ref{sec:Kolmogorov} provides the non-linear Kolmogorov equation and as examples the non-linear Vasi\v cek model and the non-linear CIR model. In Section \ref{sec:Ito} we provide a non-linear It\^o-formula together with some examples and in Section \ref{sec.:ATSM} we study (non-linear) affine term structure models. Section \ref{sec:modelrisk} studies the application to model risk and Section \ref{sec:conclusion} concludes.

\section{Setup}\label{sec:setup}
We begin with a short review of continuous affine processes in one dimension. For a detailed exposition we refer to \cite{DuffieFilipovicSchachermayer} and \cite{Filipovic2009}. Consider the canonical state space, which is either $\mathcal{X} = \mathbb{R}$ or $\cX=\R_{>0}$. 
A (time-homogeneous) Markov process $X$ with values in the state space $\cX$ is called \emph{affine} if the conditional
characteristic function of $X$ is exponential affine. This means that there exist $\mathbb{C}$-valued functions $\phi(t,u)$ and $\psi(t,u)$, respectively, such that 
$$\mathbb{E}[e ^{uX_T}\mid X_t] = e^{\phi (T-t,u)+\psi(T-t,u)X_t }$$
for all complex $u \in  \{ ix: x \in \mathbb{R}\}$, $ 0 \le t \le T$. 
The key for our non-linear formulation will be a characterization of $X$ in terms of  stochastic differential equations: more precisely, the affine process $X$ is the unique strong solution of the stochastic differential equation 
\begin{equation}
\label{SDE}
dX_t = (b^{0}+b^{1} X_t)dt + \sqrt{a^{0}+a^{1} X_t} dW_t, \qquad X_0=x
\end{equation}
where the drift parameter $b^{0}+b^{1} X_t$ and the diffusion parameter $a^{0}+a^{1} X_t$ depend on the current value of $X$ in an affine way.  Here, the process $W$ is a standard Brownian motion. It should be noted that, depending on the state space, not all parameter combinations are possible, but only those combinations which are admissible in the sense made precise in Theorem 10.2 in \cite{Filipovic2009}. For our case this implies that if on the one side $\cX=\R$ we necessarily have $a^1=0$ and $a^0 > 0$ and, on the other side, if $\cX=\R_{>0}$ we obtain $a^0=0$, $a^1>0$ and $b^0>0$. 
In addition, the coefficients $\phi$ and $\psi$ solve ODEs (classified as Riccati equations) which is the essence for the high degree of  tractability of  affine processes in the sense that explicit calculations are possible or efficient numerical methods are obtainable;  see \cite{DuffieFilipovicSchachermayer,Filipovic2009} for details and applications in this regard.

\subsection{Non-linear affine processes}
In this section we introduce the necessary tools for defining affine processes under parameter uncertainty. To this end, fix a final time horizon $T>0$ and let $\Omega =  C([0,T])$
 be the canonical space of continuous, one-dimensional paths. 
 We endow $\Omega$ with the topology of uniform convergence and denote by $\ccF$ its Borel $\sigma$-field. 
Let $X$ be the canonical process $X_t(\omega) = \omega_t$, 
 and let  $\mathbb{F}= (\ccF_t )_{t \geq 0}$ with $\ccF_t = \sigma (X_s, 0 \leq s \leq t)$ be the (raw) filtration generated by $X$.

As we are interested in semimartingale laws on $\Omega$ we begin by denoting by $\fP (\Omega)$  the Polish space of all probability measures on $\Omega$ equipped with the topology of weak convergence\footnote{The weak topology is the topology induced by the bounded continuous functions on $\Omega$. Then, $\fP(\Omega)$ is a separable metric space and we denote the associated Borel $\sigma$-field by $\ccB(\fP(\Omega))$. }. 
The process $X$ will be called a (continuous) $P$-$\bbF$-semimartingale, for $P \in \fP(\Omega)$, if there exist processes $B=B^P$ and $M=M^P$ such that $X=X_0+B+M$, where $B$ has continuous paths of (locally) finite variation $P$-a.s.,  $M$ is a continuous $P$-$\bbF$-local martingale and $B_{0}=M_0=0$.

It will be important in the following that, by Proposition (2.2) in \cite{NeufeldNutz2014}, $X$ is a $P$-$\bbF$-semimartingale if and only if it is a $P$-semimartingale with respect to the right-continuous filtration $\bbF_+=(\ccF_{t+})_{t \ge 0}$ or with respect to  the usual augmentation $\bbF_+^P$; here $\ccF_{t+}=\cap_{s > t}\ccF_s$. Hence, in the following we can consider semimartingales with respect to the raw filtration $\bbF$.

The $P$-$\bbF$-characteristics of a continuous semimartingale $X=X_0+B^P+M^P$ in the above representation is the pair $(B^P,C)$ where $C=\langle M^P \rangle$. The non-negative process $C$ does not depend on $P$, as the quadratic variation is a path property\footnote{This is because $C$ can be constructed as a single process not depending on $P$; that is, two measures under which $X$ has different diffusion are necessarily singular, see Proposition 6.6 in \cite{NeufeldNutz2014}, for the construction of $C$.}. 
For the following, we will focus on semimartingales where the semimartingale characteristics are absolutely continuous (a.c.), i.e.\ there exist predictable processes $\beta^P$ and $\alpha\ge 0$, such that 
	\begin{align}\label{eq:semimartchar}
		B^P &= \int_0^\cdot \beta^P_s ds, \quad
		C = \int_0^\cdot \alpha_s ds.
	\end{align}

A probability measure $P \in \fP(\Omega)$ is called a \emph{semimartingale law} for $X$, if $X$ is a $P$-$\bbF$-semimartingale. We denote by 
\begin{equation*}
\fP_{\sem}^{\ac}= \left\{  P \in \fP(\Omega) \,|\, X \text{ is a } P\text{-}\bbF\text{-semimartingale with a.c.\  characteristics}\right\}
\end{equation*} 
the set of all semimartingale laws of $X$ which have absolutely continuous characteristics. In the following we will always denote by $(\beta^P,\alpha)$ as in \eqref{eq:semimartchar} the differential characteristics of $X$ under $P\in\fP_{\sem}^{\ac}$.
 
Our main goal is to allow for a specific version of \emph{model risk} in the sense that there is uncertainty on the parameter vector $\theta=(b^{0},b^{1},a^{0},a^{1})$ of the affine process. %
We assume that there is additional information on bounds on the parameter vector $\theta$ and denote these finite bounds by  $\underline b^i$, $\bar b^i$, $\underline a^i$, $\bar a^i$, $i=0,1$, respectively. This leads to the compact set
\begin{equation}
\label{Theta}
\Theta = [\underline b^{0},\bar b^{0}] \times [\underline b^{1},\bar b^{1}] \times [\underline a^{0},\bar a^{0}] \times [\underline a^{1}, \bar a^{1}] \subset \R^2 \times \R_{\ge 0}^2. 
\end{equation}
We are interested in the intervals generated by the associated affine functions. In this regard, let
$B:=[\underline b^{0},\bar b^{0}] \times [\underline b^{1},\bar b^{1}]$ and $A:=[\underline a^{0},\bar a^0] \times [\underline a^{1},\bar a^{1}]$. Moreover, we denote for $a \in \R^2$,  $a:=(a^0,a^1)$ and similarly for $b\in \R^2$. Furthermore, let %
\begin{align} \label{def:AB}
	\begin{aligned} 
	b^*(x) & := \{ b^{0}+b^{1}x: b \in B \}, \\
	a^*(x) & := \{ a^{0}+a^{1}x^+: a \in A \}
\end{aligned} \end{align} 
for $x \in \R$ denote the associated set-valued functions.
As the state space will, in general, be $\R$ we have to ensure non-negativity of the quadratic variation  which is achieved using $(\cdot)^+:=\max\{\cdot,0\}$ in the definition of $a^*$.
Due to the nice structure of $\Theta$  the sets are always intervals: indeed,
	\begin{align} \label{eq:affineb*}\begin{aligned}
		b^*(x) &= [\underline b^{0}+ (\underline b^{1} \ind{x \ge 0} + \bar b^1 \ind{x <0}) x , 
		                              \bar b^{0}+ (\bar b^{1} \ind{x \ge 0} + \underline b^1 \ind{x <0}) x  ], \\
		a^*(x) &= [\underline a^{0}+\underline a^{1} x^+, \bar a^{0}+  \bar a^{1} x^+]. \end{aligned}
	\end{align} 

Clearly, it would be possible to consider more general $\Theta$, which is, however, not our focus here.
\begin{definition}
Let $\Theta$ be a set as in \eqref{Theta} with associated  $a^*$, $b^*$ as in \eqref{def:AB}, let $t \in [0,T]$, and let $P\in\fP_{\sem}^{\ac}$ be a semimartingale law. 
We call $P$ \emph{affine-dominated  on $(t,T]$} by $\Theta$, if $(\beta^P,\alpha)$ satisfy
\begin{align}\label{def:dominated}
     \beta^P_s \in b^*(X_s) , \quad \text { and }\quad \alpha_s \in a^*(X_s), 
\end{align}
for $dP\otimes dt$-almost all $(\omega,s) \in \Omega \times (t,T]$. If $t=0$, we call $P$ \emph{affine-dominated} by $\Theta$.
\end{definition}

An affine process can uniquely be characterized by its transition probabilities. We will make use of this fact for characterizing affine processes under model uncertainty. Moreover, we denote by $\ccO$ the considered state space, which will be either $\R$, $\R_{\ge 0}$ or $\R_{>0}$.

\begin{definition}\label{def:nlaffine}
Let $\Theta$ be a set as in \eqref{Theta} with associated $a^*$, $b^*$ as in \eqref{def:AB}.  
A \emph{non-linear affine process} starting at $x\in\ccO$  is the family of  semimartingale laws $P \in \fP_{\sem}^{\ac}$, such that
\begin{enumerate}[(i)]
  \item  $P(X_0=x)=1$, 
  \item  $P$ is affine-dominated by $\Theta$.
\end{enumerate}
\end{definition}
As explained in the introduction, parameter uncertainty is represented by a family of models replacing the one single model in the approaches without uncertainty: according to Definition \ref{def:nlaffine}, the affine process under parameter uncertainty is represented by a \emph{family} of semimartingale laws instead of a single one. 
We denote the semimartingale laws $P\in \fP_{\sem}^{\ac}$, satisfying $P(X_0=x)=1$ and being affine-dominated by $\Theta$  by $\cA(x,\Theta)$. Intuitively, this corresponds to a non-linear affine process starting in $x$.

It is well-known that the state space $\ccO$ needs to be chosen in correspondence with the choice of $\Theta$: indeed, the squared Bessel process is an affine process with state space $\R_{>0}$ (see, for example \cite{KaratzasShreve1988}, Prop. 3.22 of Ch. 3) and the set $\cA(x,\Theta)$ will be empty for $x <0$. 
To exclude additional difficulties in this direction, we call a family of non-linear affine processes $(\cA(x,\Theta))_{x \in \ccO}$ with state space $\ccO$ \emph{proper}, if either $a^0>0$ holds, or $\underline a^0=\bar a^0=0$ and $\underline b^0 \ge \nicefrac{\bar a^1} 2>0$. 

It is clear that in the case with $\ccO=\R$ the assumption $\underline a^0 > 0$ is sufficient for reaching the full state space. The case with non-negative state spaces is more delicate. We concentrate on the case $\ccO=\R_{>0}$. The following proposition gives a sufficient condition in this regard. It moreover shows that the non-linear affine process does not reach zero, in the sense that the event of reaching $0$ has zero probability under all $P \in \cA(x,\Theta)$.

\begin{proposition}\label{prop:positivity}
Let $x>0$,   and assume that $\underline a^0=\bar a^0=0$ and $\underline b^0 \ge \nicefrac{\bar a^1} 2>0$. %
 Then  for any $P\in \cA(x,\Theta)$ it holds that
$$ P(X_t >0, \ 0  \le t \le T) = 1. $$
\end{proposition}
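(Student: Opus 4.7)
The plan is to adapt the classical Feller boundary test to the non-linear setting by a Lyapunov-function argument with $f(x) = -\log x$, exploiting the assumption $\underline{b}^0 \geq \bar{a}^1/2$ uniformly in the parameter choice. Fix $P \in \cA(x,\Theta)$; under $P$ the canonical process admits the semimartingale decomposition $X = x + \int_0^\cdot \beta^P_s\,ds + M^P$ with $\langle M^P \rangle = \int_0^\cdot \alpha_s\, ds$, where by \eqref{eq:affineb*} and the hypotheses $\underline a^0=\bar a^0=0$ one has $\beta^P_s \ge \underline b^0 + \underline b^1 X_s$ and $\alpha_s \le \bar a^1 X_s$ on $\{X_s > 0\}$. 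I will introduce the stopping times $\tau_n := \inf\{t \in [0,T] : X_t \le 1/n\}$ and $\sigma_k := \inf\{t \in [0,T] : X_t \ge k\}$ (with $\inf \emptyset = +\infty$), so that on $[0, T \wedge \tau_n \wedge \sigma_k]$ the process $X$ is bounded away from $0$ and $\infty$ and $f$ has bounded derivatives.

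The central step is to apply It\^o's formula to $f(X)$ on this interval, yielding the drift integrand $-\beta^P_s/X_s + \alpha_s/(2 X_s^2)$. Using the above bounds together with $\underline b^0 \ge \bar a^1/2$, this integrand is bounded pointwise above by the constant $-\underline b^1$. Moreover, since $\alpha_s/X_s^2 \le \bar a^1/X_s \le \bar a^1 n$ on $[0,\tau_n]$, the local-martingale part arising from $\int f'(X_s)\, dM^P_s$ has bounded quadratic variation and is hence a true martingale, so taking $P$-expectations gives
\[
  \E_P\!\left[-\log X_{T \wedge \tau_n \wedge \sigma_k}\right] \le -\log x + |\underline b^1|\, T =: C.
\]

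Next, the lower bound $-\log X_{T\wedge\tau_n\wedge\sigma_k} \ge -\log k$ (valid since $X \le k$ on the stopped interval) together with the identity $-\log X_{\tau_n} = \log n$ on $\{\tau_n \le T \wedge \sigma_k\}$ yields
\[
  (\log n)\, P(\tau_n \le T \wedge \sigma_k) - \log k \le C,
\]
so that $P(\tau_n \le T \wedge \sigma_k) \le (C + \log k)/\log n$. To conclude I pass first to the limit $k\to\infty$ and then $n\to\infty$. The first limit requires a non-explosion bound $P(\sigma_k \le T)\to 0$, which follows from the linear affine upper bounds $\beta^P \le \bar b^0 + \bar b^1 X$ and $\alpha \le \bar a^1 X$ via a standard Gronwall argument applied to $\E_P[X_{t\wedge \sigma_k}]$. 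For arbitrary $\varepsilon>0$ one then chooses $k$ with $P(\sigma_k\le T)<\varepsilon/2$ and $n$ with $(C+\log k)/\log n < \varepsilon/2$, giving $P(\tau_n \le T) < \varepsilon$. Since $\tau_n \uparrow \tau_0 := \inf\{t : X_t = 0\}$ by continuity of $X$, this delivers $P(\tau_0 > T)=1$, which is the claim.

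The main obstacle I expect is the careful handling of the localization: verifying that on the stopped interval the stochastic integral is a genuine zero-mean martingale (not merely a local one) and establishing the non-explosion estimate. Both are standard once one uses the a.c.\ characteristic property built into $\fP_{\sem}^{\ac}$ together with the fact that $\alpha$ and $\beta^P$ are predictable processes controlled linearly in $X$; it is precisely this linear-affine control that makes the It\^o-Gronwall machinery applicable in the present setting.
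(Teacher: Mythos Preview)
Your argument is correct and follows a genuinely different route from the paper. The paper applies It\^o's formula to the Lyapunov function $x\mapsto x^{-m}$ with $m:=(2\underline b^0-\bar a^1)/\bar a^1$, obtains the Gronwall bound $E^P[(X_{t\wedge\tau_\varepsilon})^{-m}]\le x^{-m}e^{m|\underline b^1|t}$, and concludes via Chebyshev. Because $x^{-m}\ge 0$, no upper localization is needed there; the martingale property of the stochastic integral is obtained directly from a BDG estimate on $[\![0,\tau_\varepsilon]\!]$.

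Your choice $f(x)=-\log x$ leads to the same drift cancellation $\tfrac{1}{X}(\tfrac{\bar a^1}{2}-\underline b^0)\le 0$, but since $-\log x$ is unbounded below you need the additional stopping at $\sigma_k$ to extract a lower bound on the expectation. Two remarks on that step: first, the non-explosion input $P(\sigma_k\le T)\to 0$ is actually immediate here without any Gronwall argument, because $\Omega=C([0,T])$ and hence every path is bounded, so $\sigma_k=+\infty$ eventually $\omega$-by-$\omega$. Second, your approach has the advantage of covering the boundary case $\underline b^0=\bar a^1/2$ (i.e.\ $m=0$), where the paper's power function degenerates to a constant and its Chebyshev step becomes vacuous; the logarithmic test function is precisely the correct scale at this critical exponent.
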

\begin{proof}
Let $P \in \mathcal{A}(x,\Theta)$, denote by $\beta^P$ and $\alpha$ the associated processes from Equation \eqref{eq:semimartchar},
	and denote by $M^P$ the $P$-local martingale part of the $P$-semimartingale $X$. Moreover, for any $c \geq 0$ define
	\begin{equation*}
	\tau_c:=\inf\{t\geq 0\colon X_t=c\}.
	\end{equation*}
We need to show for any time $\cT>0$ that $P[\tau_0\leq\cT]=0$. To that end, fix an arbitrary $\cT>0$.
We adopt the method in \cite{gikhman2011Feller} to our setting:
let $\varepsilon$ such that $0<\varepsilon<x$.
Notice that  by continuity of the paths of $X$, we have that $X\geq\varepsilon>0$ on $[\![0,\tau_\varepsilon]\!]$. 
Moreover, 
by assumption,
\begin{align} m := \frac{2\underline b^0 - \bar a^1}{\bar a^1} > 0. \label{eq:m} \end{align}
It\^o's formula yields for any $t\geq 0$ that
\begin{align*}
(X_{\tau_\varepsilon \wedge t})^{-m} & = (X_0)^{-m} - \int_0^{\tau_\varepsilon \wedge t} m X_s^{-(m+1)} \beta^P_s \,ds 
+ \half \int_0^{\tau_\varepsilon \wedge t} m(m+1) X_s^{-(m+2)} \alpha_s  \,ds \\
& \quad 
-\int_0^{\tau_\varepsilon \wedge t} m X_s^{-(m+1)} \,dM^P_s
\end{align*}
Define the processes $M$ and $\tilde M$ by
\begin{align*}
M_t & =-\int_0^{t} m X_s^{-(m+1)} \,dM^P_s, \qquad t\geq 0,\\
\tilde M_t & =  \int_0^t   (X_{\tau_\varepsilon \wedge s})^{-(m+1)} dM^P_s,
\qquad t\geq 0.
\end{align*}
Clearly, both $M$ and $\tilde M$ are local martingales  and $M=-m\tilde M$ on $\llbracket 0,\tau_\varepsilon \rrbracket$. We now show that $\tilde M$ is a true martingale.
By the Burkholder-Davis-Gundy inequality, since $X\ge \varepsilon$ on $[\![0,\tau_\varepsilon]\!]$, we obtain that
\begin{align*}
E\bigg[\sup_{0 \le s \le T} |\tilde M_s|\bigg] 
&\le C E\bigg[ \bigg(\int_0^T X_{ \tau_\varepsilon\wedge s}^{-2(m+1)} \alpha_{ \tau_\varepsilon\wedge s} ds \bigg)^{\nicefrac 1 2 }\bigg] \\
&\le C E\bigg[ \bigg(\bar a^1  \int_0^T X_{ \tau_\varepsilon\wedge s}^{-2(m+1)+1} ds \bigg)^{\nicefrac 1 2 }\bigg] \\
&\le C (\bar a^1 \varepsilon^{-2(m+1)+1}T)^{\nicefrac 1 2}< \infty
\end{align*}
and hence $\tilde M$  is indeed a martingale. Therefore,  $M$ is  a true martingale on $\llbracket 0,\tau_\varepsilon \rrbracket$.
Next, since $P\in \cA(x,\Theta)$ and $m>0$, we obtain  the estimate
\begin{align*}
& X_{\tau_\varepsilon \wedge t}^{-m}\\ 
& \le  x^{-m} - \int_0^{\tau_\varepsilon \wedge t} m X_s^{-(m+1)} (\underline b^0 + \underline b^1 X_s) \, ds + \half \int_0^{\tau_\varepsilon \wedge t} m(m+1) X_s^{-(m+2)} \bar a^1 X_s \, ds + M_{\tau_\varepsilon \wedge t}\\
& =
x^{-m} - m \underline b^1 \int_0^{\tau_\varepsilon \wedge t}  X_s^{-m}  \, ds +  \int_0^{\tau_\varepsilon \wedge t} \big(\tfrac{m(m+1)\bar a^1}{2}-m\underline b^0\big) X_s^{-(m+1)}  \, ds + M_{\tau_\varepsilon \wedge t}.
\end{align*}
Taking expectations and using that $X\geq \varepsilon>0$ on $[\![0,\tau_\varepsilon]\!]$ yields, in view of \eqref{eq:m}, for all $t\geq 0$ that
\begin{align}\label{eq:Gronwall-prep1}
E[ (X_{ \tau_\varepsilon\wedge t})^{-m} ]
 & \le 
 x^{-m} 
- m \underline b^1 \int_0^{\tau_\varepsilon \wedge t} E[(X_{s})^{-m}] \,ds
& \le
 x^{-m} 
+ m |\underline b^1| \int_0^{\tau_\varepsilon \wedge t} E[(X_{s})^{-m}] \,ds
\end{align}
Moreover, 
 as $X\geq \varepsilon>0$ on $[\![0,\tau_\varepsilon]\!]$, we obtain that
\begin{equation*}
m |\underline b^1| \int_0^{\tau_\varepsilon \wedge t} E[(X_{s})^{-m}] \,ds 
=  m |\underline b^1| \int_0^{t} \ind{s \le \tau_\epsilon} E[(X_{\tau_\varepsilon\wedge s})^{-m}] \,ds
\leq
 m |\underline b^1|\int_0^t E[(X_{ \tau_\varepsilon \wedge s })^{-m}] \,ds.
\end{equation*}
This together with \eqref{eq:Gronwall-prep1} yields for any $t\geq 0$ that
\begin{align*}
E[ (X_{\tau_\varepsilon  \wedge t })^{-m} ] 
   \le 
   x^{-m} + m |\underline b^1|\int_0^t E[(X_{ \tau_\varepsilon\wedge s})^{-m} ] \,ds. 
\end{align*}
By means of Gronwall's inequality we obtain for all $t\geq 0$ that
\begin{align} \label{temp260}
    E[ (X_{t \wedge \tau_\varepsilon})^{-m} ] 
    & \le x^{-m}e^{m |\underline b^1| t}.
\end{align}
In particular, we obtain by Tschebyscheffs inequality for our fixed time $\cT>0$ that
\begin{align*}
   P(\tau_\varepsilon < \cT) &= P(X_{\tau_\varepsilon \wedge \cT} \le \varepsilon )
      = P((X_{\tau_\varepsilon \wedge \cT})^{-m} \ge \varepsilon^{-m} ) 
       \le \varepsilon^m E[(X_{\tau_\varepsilon \wedge \cT})^{-m}].
\end{align*}
 Inserting \eqref{temp260} and letting $\varepsilon$ tend to zero yields the claim.
The proof of Proposition~\ref{prop:positivity} is thus complete.
\end{proof}

\section{Dynamic programming}\label{sec:DPP-PDE}

One of the key insights for Markov processes is the deep link between Markov processes and their expectations to partial differential equations given by the \emph{Kolmogorov} equation. In this section we generalize this relation to the case with parameter uncertainty, i.e.~we develop the relation of the  non-linear affine process to a non-linear version of the Kolmogorov equation. The path we detail in this section uses dynamic programming and the results obtained in \cite{Nutz} and \cite{ElKarouiTan.13a}. The key to dynamic programming is a certain stability property under conditioning and pasting. As the non-linear affine processes we considered up to now always start from time $t=0$, we introduce the appropriate conditional formulations first.

For the remainder of the section, fix $\Theta$ as in \eqref{Theta} with associated $a^*$ and $b^*$ as in \eqref{def:AB}.
Denote by $\cA(t,x,\Theta)$ the semimartingale laws $P\in \fP_{\sem}^{\ac}$, such that
\begin{enumerate}[(i)]
    \item $P(X_t =x)=1$, and
    \item $P$ is affine-dominated on $(t,T]$ by $\Theta$.
\end{enumerate}

The following result yields measurability of the non-linear affine process starting at $t$ in $\omega(t)$.
\begin{lemma}\label{le:meas}
	The set $\big\{(\omega,t,P) \in \Omega\times [0,T]\times \fP(\Omega)\,\big|\, P\in \cA(t,\omega(t),\Theta) \big\}$ is Borel. 
\end{lemma}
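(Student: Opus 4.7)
The plan is to decompose the set $\Sigma := \bigl\{(\omega,t,P)\in\Omega\times[0,T]\times\fP(\Omega) : P\in \cA(t,\omega(t),\Theta)\bigr\}$ into a finite intersection whose pieces correspond to the defining conditions and to verify Borel measurability of each piece separately. Write $\Sigma = \Sigma_1\cap\Sigma_2\cap\Sigma_3$ where
\[
\Sigma_1 := \Omega\times[0,T]\times\fP_{\sem}^{\ac},\qquad \Sigma_2 := \bigl\{(\omega,t,P) : P(X_t=\omega(t))=1\bigr\},
\]
and $\Sigma_3$ encodes the affine-domination condition \eqref{def:dominated} on $(t,T]$.

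For $\Sigma_1$, I would invoke the fact (established in \cite{NeufeldNutz2014}) that $\fP_{\sem}^{\ac}$ is a Borel subset of $\fP(\Omega)$. For $\Sigma_2$, observe that $(\omega,t,\omega')\mapsto \omega'(t)-\omega(t)$ is continuous on $\Omega\times[0,T]\times\Omega$, so $h(\omega,t,\omega'):=\ind{\omega'(t)=\omega(t)}$ is Borel; then Borel measurability of $(\omega,t,P)\mapsto \int h(\omega,t,\omega')\,P(d\omega')$ follows from the standard fact that $P\mapsto E^P[f]$ is Borel for every bounded Borel $f$, extended parametrically by a monotone-class argument.

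The main step is $\Sigma_3$, which requires jointly Borel versions of the differential characteristics. From \cite{NeufeldNutz2014} one extracts Borel-measurable maps $\beta: \fP_{\sem}^{\ac}\times\Omega\times[0,T]\to\R$ and $\alpha: \Omega\times[0,T]\to\R_{\ge 0}$ serving as versions of the drift density and of the Lebesgue density of $\langle M^P\rangle$ under each $P\in\fP_{\sem}^{\ac}$ (recall that $C=\langle M^P\rangle$ does not depend on $P$, so $\alpha$ can be chosen path-by-path). Since the interval-valued maps $b^*$ and $a^*$ from \eqref{eq:affineb*} have Borel endpoints in $x$, the function
\[
H(\omega',s,P):=\ind{\beta(P,\omega',s)\notin b^*(\omega'(s))}+\ind{\alpha(\omega',s)\notin a^*(\omega'(s))}
\]
is Borel. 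The condition \eqref{def:dominated} on $(t,T]$ is then equivalent to $\int_\Omega\int_t^T H(\omega',s,P)\,ds\,P(d\omega')=0$. Fubini together with the kernel measurability of parametric expectations shows that $(s,P)\mapsto E^P[H(\cdot,s,P)]$ is Borel, hence $(t,P)\mapsto \int_t^T E^P[H(\cdot,s,P)]\,ds$ is Borel, and so $\Sigma_3$ is a Borel cylinder in $\omega$ over a Borel subset of $[0,T]\times\fP(\Omega)$.

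The hard part is the existence of the jointly Borel selection $(\omega,s,P)\mapsto\beta(P,\omega,s)$ for the drift density, which is where the Borelness of $\fP_{\sem}^{\ac}$ is essential; once this measurable selection is granted, the remainder consists of routine Fubini and monotone-class bookkeeping to assemble $\Sigma_1\cap\Sigma_2\cap\Sigma_3$.
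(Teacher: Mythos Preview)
Your proposal is correct and follows essentially the same route as the paper: both decompose the set into the initial-condition piece and the affine-domination piece, invoke \cite{NeufeldNutz2014} for the Borelness of $\fP_{\sem}^{\ac}$ and for the jointly Borel version $(P,\widetilde\omega,s)\mapsto(\beta^P_s(\widetilde\omega),\alpha_s(\widetilde\omega))$ of the differential characteristics, and then combine Fubini with a monotone-class argument for the measurability of parametric expectations. The only cosmetic difference is the order in which the $ds$-integration and the $P$-expectation are taken; you also spell out the argument for $\Sigma_2$ a bit more explicitly than the paper does.
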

\begin{proof}
	By Theorem 2.6 in \cite{NeufeldNutz2014}, the set $\fP_{\sem}^{\ac}$ is Borel, 
	which proves that 
	\begin{align*}
	& \ \big\{(\omega,t,P) \in \Omega\times [0,T]\times \fP_{\sem}^{\ac}\,\big|\, P(X_t =\omega(t)) =1 \big\}
	\end{align*}
	is Borel. 
	Moreover, Theorem 2.6 in \cite{NeufeldNutz2014}, also grants the existence of a Borel-measurable map
	$$ (P,\widetilde\omega,s) \mapsto (\beta_s^P(\widetilde\omega),\alpha_s(\widetilde \omega))$$
	such that $(\beta^P,\alpha)$ are the differential characteristics   of $X$ under $P$. Therefore, we obtain the Borel measurability of the set 
	\begin{align*}
	G&:=\bigg\{(\omega,t,P,\widetilde\omega,s) \in \Omega \times [0,T]\times \fP_{\sem}^{\ac}\times\Omega \times[0,T]\,
	   \bigg|\, 
	     s>t,\,   \\
	& \qquad \qquad P(X_t =\omega(t))=1,  (\beta^P_s(\widetilde\omega),\alpha_s(\widetilde\omega)) \in b^*(X_s(\widetilde\omega))\times a^*(X_s(\widetilde\omega))\bigg\}.
	\end{align*}
	Applying Fubini's theorem yields the Borel measurability of the set 
	\begin{equation*}
	G':=\Big\{(\omega,t,P,\widetilde\omega) \in \Omega\times [0,T]\times \fP_{\sem}^{\ac}\times\Omega\,\Big|\,\int_0^T \mathbf{1}_G(\omega,t,P,\widetilde\omega,s)\,\mathbf{1}_{(t,T]}(s)\,ds =T-t\Big\}.
	\end{equation*}
	Now, observe that
	\begin{align*}
	    \ \big\{(\omega,t,P) \in \Omega\times &[0,T]\times \fP(\Omega)\,\big|\, P\in \cA(t,\omega(t),\Theta)\big\}\\
	      &=  \ \big\{(\omega,t,P) \in \Omega\times [0,T]\times \fP_{\sem}^{\ac}\,\big|\,E^P[\mathbf{1}_{G'}(\omega,t,P,\cdot)]=1\big\}.
	\end{align*}
	and the right hand side is Borel measurable due to a monotone class argument as in \cite[Lemma~3.1]{NeufeldNutz2014}.
\end{proof}
\begin{lemma}\label{le:DPP-help}
	Fix  $(x,t) \in \R\times [0,T]$, $P \in \cA(t,x,\Theta)$ and a stopping time $\tau$ taking values in $[t,T]$. 
	\begin{enumerate}[(i)]
		\item  There exists a family of conditional probabilities $(P_{\omega})_{\omega \in \Omega}$ of $P$ with respect to $\ccF_\tau$ such that $P_\omega \in \cA(\tau(\omega),\omega_{\tau(\omega)},\Theta)$ for $P$-a.e. $\omega \in \Omega$. 
		\item Assume that there exists a family of probability measures $(Q_{\omega})_{\omega \in \Omega}$ such that $Q_\omega \in \cA(\tau(\omega),\omega_{\tau(\omega)},\Theta)$ for $P$-a.e. $\omega \in \Omega$, and the map $\omega \to Q_\omega$ is $\ccF_\tau$ measurable. Then, the probability measure $P\otimes Q$ defined by
		\begin{equation*}
		P\otimes Q(\,\cdot\,) =\int_{\Omega} Q_\omega(\,\cdot\,)\,P(d\omega)
		\end{equation*}
		is an element of $\cA(t,x,\Theta)$.
	\end{enumerate} 
\end{lemma}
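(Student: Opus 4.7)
My plan is to derive both statements from the conditioning and pasting theorems for semimartingale laws established in \cite{NeufeldNutz2014}, exploiting crucially that the affine bounds \eqref{def:dominated} depend only pointwise on the current state $X_s(\omega)$ and are therefore stable under both pathwise operations. In each part I will separate the structural property (being in $\fP_{\sem}^{\ac}$) from the affine-domination bounds.

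For part (i), since $\Omega = C([0,T])$ is Polish and $\cF_\tau$ is countably generated, a regular conditional probability family $(P_\omega)_{\omega\in\Omega}$ of $P$ given $\cF_\tau$ exists. Applying the defining property of an r.c.p.d.\ to the $\cF_\tau$-measurable random variables $\tau$ and $X_\tau$ gives
\begin{equation*}
P_\omega\bigl(\tau=\tau(\omega),\ X_\tau=\omega_{\tau(\omega)}\bigr)=1 \quad\text{for $P$-a.e.\ $\omega$},
\end{equation*}
which delivers condition (i) in the definition of $\cA(\tau(\omega),\omega_{\tau(\omega)},\Theta)$. For the semimartingale property, I invoke the conditioning theorem for semimartingale laws from \cite{NeufeldNutz2014}, which yields $P_\omega\in\fP_{\sem}^{\ac}$ for $P$-a.e.\ $\omega$ together with the identification of the differential characteristics: $(\beta^{P_\omega},\alpha)$ equals the restriction of $(\beta^P,\alpha)$ to times $s>\tau(\omega)$, up to a $dP_\omega\otimes dt$-null set. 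Since $(\beta^P_s,\alpha_s)\in b^*(X_s)\times a^*(X_s)$ holds $dP\otimes dt$-a.e.\ on $(t,T]\supset(\tau(\omega),T]$, a Fubini-type disintegration transfers this containment to the $P_\omega$-picture for $P$-a.e.\ $\omega$, yielding (ii) of the affine-domination definition.

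For part (ii), set $\bar P:=P\otimes Q$. Well-definedness follows from the $\cF_\tau$-measurability of $\omega\mapsto Q_\omega$ by a monotone class argument in the spirit of Lemma~3.1 in \cite{NeufeldNutz2014}. Since $\{X_t=x\}\in\cF_t\subset\cF_\tau$ and the pasting leaves $\cF_\tau$-measurable events unchanged, $\bar P(X_t=x)=P(X_t=x)=1$. The pasting theorem for semimartingale laws in \cite{NeufeldNutz2014} shows $\bar P\in\fP_{\sem}^{\ac}$, with differential characteristics that coincide with those of $P$ on $[0,\tau]$ and with those of $Q_\omega$ on $(\tau(\omega),T]$. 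Splitting $(t,T]=(t,\tau]\cup(\tau,T]$, the required containments $\beta^{\bar P}_s\in b^*(X_s)$ and $\alpha_s\in a^*(X_s)$ $d\bar P\otimes dt$-a.e.\ then follow from the corresponding bounds for $P$ on $(t,\tau]$ and for $Q_\omega$ on $(\tau(\omega),T]$, once again by Fubini.

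The main technical obstacle I anticipate is the preservation of the \emph{state-dependent} affine bounds through the measure-theoretic operations. In the \Levy-type framework of \cite{NeufeldNutz2017} the bounds on $(\beta,\alpha)$ are constants, so this step is essentially automatic; here the set-valued maps $b^*$ and $a^*$ depend on $X_s(\omega)$ through the affine expressions \eqref{eq:affineb*}, and one must verify that the $dP\otimes dt$-a.e.\ containment survives both disintegration (for (i)) and recombination (for (ii)). Fortunately, because $(\beta^P,\alpha)$ are predictable processes constructed pathwise and the defining inequalities are pointwise in $(\omega,s)$, the transfer reduces to standard Fubini arguments once the semimartingale operations of \cite{NeufeldNutz2014} are in place.
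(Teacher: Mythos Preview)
Your proposal is correct and follows essentially the same route as the paper: the paper's proof is a one-line citation of \cite[Theorem~2.1]{NeufeldNutz2017} (with a pointer to \cite[Lemma~4.6]{GuoTanTouzi.15}), which packages precisely the conditioning and pasting results for semimartingale laws from \cite{NeufeldNutz2014} that you invoke directly. Your observation that the state-dependent bounds $b^*(X_s),a^*(X_s)$ are pointwise in $(\omega,s)$ and therefore survive disintegration and recombination via Fubini is exactly the content absorbed into that cited theorem, so you have simply unpacked what the paper leaves to the reference.
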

\begin{proof}
	This follows directly from \cite[Theorem~2.1]{NeufeldNutz2017}, see also \cite[Lemma~4.6]{GuoTanTouzi.15}. 
\end{proof}

Now, fix a Borel measurable function $\psi:\ccO \to \R$ and  define the value function $v:[0,T]\times \ccO\to \R$ by
\begin{equation*}
v(t,x):=\sup_{P\in \cA(t,x,\Theta)} E^P[\psi (X_T)].
\end{equation*}
Using the above results, we obtain  the following dynamic programming principle.
\begin{proposition}\label{prop:DPP}
	Consider a proper family of non-linear affine processes with state space $\ccO$. For any $(t,x) \in [0,T]\times \ccO $ and any stopping time $\tau$ taking values in $[t,T]$, we obtain
	\begin{equation*}
	v(t,x)= \sup_{P\in \cA(t,x,\Theta)}E^P\big[v(\tau,X_\tau)\big].
	\end{equation*}
\end{proposition}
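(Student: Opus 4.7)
The proof will establish the two inequalities $v(t,x)\le \sup_P E^P[v(\tau,X_\tau)]$ and $v(t,x)\ge \sup_P E^P[v(\tau,X_\tau)]$ separately, following the standard template of \cite{NeufeldNutz2017} and \cite{ElKarouiTan.13a}; the stability properties under conditioning and pasting from Lemma \ref{le:DPP-help} are precisely what is needed to make this work in our set-up.

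For the inequality $v(t,x)\le \sup_P E^P[v(\tau,X_\tau)]$, the plan is to fix $P\in \cA(t,x,\Theta)$, invoke Lemma \ref{le:DPP-help}(i) to obtain a regular conditional probability family $(P_\omega)_{\omega\in\Omega}$ of $P$ with respect to $\ccF_\tau$ such that $P_\omega\in \cA(\tau(\omega),\omega_{\tau(\omega)},\Theta)$ for $P$-a.e.\ $\omega$, and then compute
\begin{equation*}
E^P[\psi(X_T)]=\int_\Omega E^{P_\omega}[\psi(X_T)]\,P(d\omega)\le \int_\Omega v(\tau(\omega),\omega_{\tau(\omega)})\,P(d\omega)=E^P[v(\tau,X_\tau)],
\end{equation*}
where the inequality uses the definition of $v$ applied to each $P_\omega$. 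Taking the supremum over $P\in\cA(t,x,\Theta)$ on the left yields the desired bound. The measurability of $\omega\mapsto v(\tau(\omega),\omega_{\tau(\omega)})$ needed to make sense of the integral is ensured by Lemma \ref{le:meas} together with an upper-semianalyticity argument for $v$ as in \cite[Section~2.3]{NeufeldNutz2017}.

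For the converse inequality $v(t,x)\ge \sup_P E^P[v(\tau,X_\tau)]$, the plan is to fix $P\in \cA(t,x,\Theta)$ and $\varepsilon>0$ and construct an $\ccF_\tau$-measurable selector $\omega\mapsto Q_\omega$ such that $Q_\omega\in \cA(\tau(\omega),\omega_{\tau(\omega)},\Theta)$ and
\begin{equation*}
E^{Q_\omega}[\psi(X_T)]\ge v(\tau(\omega),\omega_{\tau(\omega)})-\varepsilon
\end{equation*}
for $P$-a.e.\ $\omega$. With this selector in hand, Lemma \ref{le:DPP-help}(ii) gives $P\otimes Q\in \cA(t,x,\Theta)$, and Fubini yields
\begin{equation*}
v(t,x)\ge E^{P\otimes Q}[\psi(X_T)]=\int_\Omega E^{Q_\omega}[\psi(X_T)]\,P(d\omega)\ge E^P[v(\tau,X_\tau)]-\varepsilon.
\end{equation*}
Letting $\varepsilon\downarrow 0$ and taking the supremum over $P$ concludes this direction.

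The main obstacle is the existence of the measurable $\varepsilon$-optimal selector $\omega\mapsto Q_\omega$. This is the classical measurable-selection step and relies on (a) the Borel measurability established in Lemma \ref{le:meas}, which shows that the graph $\{(\omega,t,P):P\in \cA(t,\omega(t),\Theta)\}$ is Borel, and (b) the upper semianalyticity of the map $(t,y,P)\mapsto E^P[\psi(X_T)]$ together with $(t,y)\mapsto v(t,y)$. Invoking the Jankov--von Neumann selection theorem (as in \cite[Proposition~7.50]{NeufeldNutz2017}-style arguments, originally from Bertsekas--Shreve) on the $\ccF_\tau$-measurable graph obtained by substituting $\tau(\omega)$ and $\omega_{\tau(\omega)}$ then yields a universally measurable $\varepsilon$-optimizer; a standard outer-regularization argument upgrades it to $\ccF_\tau$-measurability up to a $P$-null set, which is all that Lemma \ref{le:DPP-help}(ii) requires. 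Once the selector exists, the remainder of the proof is the routine pasting computation above.
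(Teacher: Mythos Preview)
Your proposal is correct and takes essentially the same approach as the paper: the paper's proof simply invokes Theorem~2.1 in \cite{ElKarouiTan.13b} (with Lemma~\ref{le:meas} supplying the required analyticity and Lemma~\ref{le:DPP-help} the stability under conditioning and pasting), and your two-inequality argument with Jankov--von~Neumann $\varepsilon$-selection is precisely what that cited theorem encapsulates. One minor slip: the bracketed Proposition~7.50 should point to Bertsekas--Shreve, not \cite{NeufeldNutz2017}, as you yourself indicate parenthetically.
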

\begin{proof}
	The result follows from Theorem 2.1 in  \cite{ElKarouiTan.13b} (with $\overline{\mathcal{P}}_{t,x}=\cA(t,x,\Theta)$ in the notation of \cite{ElKarouiTan.13b})
  by noting that analyticity is implied by measurability as shown in Lemma \ref{le:meas} and the required stability assumptions have been shown in Lemma \ref{le:DPP-help}.
\end{proof}

\subsection{Continuity of the value function}
In the following, we show the continuity of the value function $v(t,x)$. To this end, introduce the constant
\begin{equation}\label{def:constant-K}
\mathcal{K}=|\underline{b}^0|+|\underline{b}^1|+|\bar{b}^0|+|\bar{b}^1|+\bar{a}^0+\bar{a}^1,
\end{equation}
which is finite by Assumption \eqref{Theta}. The following inequality is the cornerstone of the  results in this section.

\begin{lemma}\label{le:flow-estimate}
	Consider a proper family of non-linear affine processes with state space $\ccO$ and let $q\ge 1$. There exists an $0<\varepsilon\equiv\varepsilon(q)<1$ such that for all  $0<h\leq\varepsilon$, all $t \in[0,T-h]$ 
	and all $x\in \ccO$,
	we have
	\begin{equation*}
	\sup_{P \in \cA(t,x,\Theta)}E^P\Big[\sup_{0\leq s\leq h} |X_{t+s}-x|^q\Big]\leq C(h^q+h^{q/2})
	\end{equation*}
	for some constant $C=C(x,q)>0$ which may depend on $x$, but is independent of $h$ and $t$.
\end{lemma}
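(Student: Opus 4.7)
The plan is to decompose $X_{t+s}-x$ into its drift and local-martingale parts and control each via Hölder and Burkholder--Davis--Gundy, using the affine growth bounds on the differential characteristics, and then to close the resulting estimate via an AM--GM absorption argument (rather than Gronwall, because the BDG step produces a $\sqrt{\eta}$ term).

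Fix $P\in\cA(t,x,\Theta)$ and write $X_{t+s}-x = A_s + N_s$, where $A_s := \int_t^{t+s}\beta^P_u\,du$ and $N_s := M^P_{t+s}-M^P_t$ is the local-martingale part of $X$ under $P$. From \eqref{eq:affineb*} together with the definition of $\mathcal{K}$ in \eqref{def:constant-K}, one reads off $|\beta^P_u|\le \mathcal{K}(1+|X_u|)$ and $\alpha_u\le \mathcal{K}(1+|X_u|)$. A standard localization via $\tau_n:=\inf\{u\ge t: |X_u|\ge n\}$ turns $N$ into a genuine square-integrable martingale and guarantees finiteness of the moments below; since the final constant will not depend on $n$, the full statement follows by Fatou's lemma after sending $n\to\infty$.

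Let $Y:=\sup_{0\le s\le h}|X_{(t+s)\wedge\tau_n}-x|$, so that $|X_u|\le |x|+Y$ on the stopped interval. Hölder's inequality gives $\sup_{0\le s\le h}|A_s|^q \le \mathcal{K}^q h^q(1+|x|+Y)^q$, and Burkholder--Davis--Gundy combined with $\alpha_u\le \mathcal{K}(1+|x|+Y)$ yields
\begin{equation*}
E^P\Bigl[\sup_{0\le s\le h}|N_s|^q\Bigr]\le c_q \mathcal{K}^{q/2} h^{q/2}\, E^P\bigl[(1+|x|+Y)^{q/2}\bigr].
\end{equation*}
Using $(a+b)^q\le 2^{q-1}(a^q+b^q)$, $|X_{t+s}-x|^q\le 2^{q-1}(|A_s|^q+|N_s|^q)$, and Jensen's inequality $E^P[Y^{q/2}]\le (E^P[Y^q])^{1/2}$, one obtains a quadratic inequality of the form
\begin{equation*}
\eta \;\le\; C_1(x,q)(h^q+h^{q/2}) + C_2(q)\,h^q\,\eta + C_3(x,q)\,h^{q/2}\sqrt{\eta},
\end{equation*}
with $\eta:=E^P[Y^q]$ and constants depending only on $\mathcal{K}$, $|x|$ and $q$ (in particular, uniform in $P\in\cA(t,x,\Theta)$ and in $t$).

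Choosing $\varepsilon(q)$ so small that $C_2(q)h^q\le \tfrac12$ for $h\le\varepsilon$ and absorbing the $\sqrt{\eta}$ term via $C_3 h^{q/2}\sqrt{\eta}\le \tfrac14\eta+C_3^2 h^q$ yields $\eta\le C(x,q)(h^q+h^{q/2})$, independently of $n$ and $P$, after which the supremum over $P$ and $n\to\infty$ finish the proof. The main obstacle is the self-referential $\sqrt{\eta}$ term produced by BDG together with the linear-in-$|X|$ (rather than bounded) diffusion coefficient; this is precisely why the statement takes the form $h^q+h^{q/2}$ and why the bound is restricted to small $h$. All constants depend only on the a priori data $(\mathcal{K},x,q)$, giving the required uniformity in $P$ and $t$.
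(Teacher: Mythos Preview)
Your proof is correct and follows the same overall strategy as the paper (split into drift and local-martingale parts, bound via the affine growth of $(\beta^P,\alpha)$ and BDG, then close a self-referential inequality for small $h$), but the closure mechanism differs. After BDG the paper bounds $E^P\big[(\int_t^{t+h}\alpha_u\,du)^{q/2}\big]$ by noting that $\widetilde{\mathcal K}+\widetilde{\mathcal K}\sup_{0\le s\le h}|X_{t+s}|\ge 1$ (with $\widetilde{\mathcal K}=1+\mathcal K$), so $(\cdot)^{q/2}\le(\cdot)^q$ and the resulting inequality is \emph{linear} in $\eta=E^P[\sup_{0\le s\le h}|X_{t+s}-x|^q]$; one then simply solves $\big(1-c_q^3\widetilde C_q\widetilde{\mathcal K}^q(\varepsilon^{q/2}+\varepsilon^q)\big)\eta\le C(x,q)(h^{q/2}+h^q)$. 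Your route via $E^P[Y^{q/2}]\le\sqrt{\eta}$ instead yields a quadratic inequality and needs the extra AM--GM absorption step; both close, but the paper's lift to power $q$ is slightly cleaner. Conversely, your explicit localization by $\tau_n$ and passage to the limit via Fatou is a point the paper glosses over: the paper's linear inequality is only informative once $\eta<\infty$ is known a priori, which strictly speaking also requires a stopping argument of the kind you supply.
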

\begin{proof}
   Let $q \in [1,\infty)$.
   Consider $P \in \cA(t,x,\Theta)$ and denote by $X_s=x+B^P_s+M^P_s$, $s \ge t$, the semimartingale representation of $X$ with predictable finite-variation part $B^P$ and local martingale $M^P$. In the following we will repeatedly use the elementary inequality that 
   \begin{align}\label{eq:elementaryineq} 
        (a_1+a_2)^q \le 2^{q-1}( a_1^q + a_2^q) 
   \end{align}
   and denote $c_q:=2^{q-1}$.

   The  Burkholder--Davis--Gundy (BDG) inequality (see Theorem~4.1 in  \cite{RevuzYor1999}) together with Jensen's inequality and \eqref{eq:elementaryineq} yields for any $h \in [0,T-t]$ that
	\begin{align}\label{temp:308}
	   E^P\Big[\sup_{0\leq s \leq h}|X_{t+s}-x|^q\Big] &\leq c_q E^P\Big[\sup_{0\leq s \leq h}|M^P_{t+s}|^q\Big] + c_q E^P\Big[\sup_{0\leq s \leq h}|B^P_{t+s}|^q\Big]\\
	       &\leq c_q \widetilde C_q E^P\bigg[\Big(\int_t^{t+h} \alpha_u\,du\Big)^{\nicefrac q 2}\bigg] + c_qE^P\bigg[\Big(\int_t^{t+h} |\beta^P_u|\,du\Big)^q\bigg].\nonumber
	\end{align}
    Note that the constant $\widetilde C_q \ge 1$ from the BDG inequality does depend on $q$ only.

	Let $\widetilde{\mathcal{K}}:=1+\mathcal K$, where $\mathcal K$ is the constant defined in \eqref{def:constant-K}.
	Choose any $0<\varepsilon=\varepsilon(q)<1$
	small enough such that it satisfies 
	\begin{align} \label{cond:epsilon}
	1-c_q^3\widetilde C_q \widetilde{\mathcal{K}}^q  (\varepsilon^q+\varepsilon^{q/2})>0. \end{align} 
	Let us verify that such a fixed $\varepsilon$ satisfies the desired property: 
	by the very definition of $P \in \cA(t,x,\Theta)$, we have on $[t,t+h]$ that both $\alpha$ and $|\beta^P|$ are bounded from above by $\widetilde{\mathcal{K}}+\widetilde{\mathcal{K}}\sup_{0\leq s \leq h}|X_{t+s}|\ge 1$ since they are affine dominated. 
	This, together with  Jensen's inequality yields that
	\begin{align*}
	 E^P\bigg[\Big(\int_t^{t+h} \alpha_u\,du\Big)^{\nicefrac q 2}\bigg] 
	    &\leq  h^{q/2}E^P\bigg[\Big(\widetilde{\mathcal{K}} + \widetilde{\mathcal{K}} \sup_{0\leq s \leq h}|X_{t+s}|\Big)^{\nicefrac q 2}\bigg]\\
	    &\leq  h^{q/2}E^P\bigg[\Big(\widetilde{\mathcal{K}} + \widetilde{\mathcal{K}} \sup_{0\leq s \leq h}|X_{t+s}|\Big)^{q}\bigg]\\
	    &\leq  h^{q/2}c_q\bigg(\widetilde{\mathcal{K}}^q  + \widetilde{\mathcal{K}}^q E^P\Big[\Big(\sup_{0\leq s \leq h}|X_{t+s}|\Big)^q\Big]\bigg)\\
	    &\leq  h^{q/2}c_q^2\bigg(\widetilde{\mathcal{K}}^q  + \widetilde{\mathcal{K}}^q |x|^q + \widetilde{\mathcal{K}}^q E^P\Big[\Big(\sup_{0\leq s \leq h}|X_{t+s}-x|\Big)^q\Big]\bigg).
	\end{align*}
    In a similar way we obtain
	\begin{align*}
	 E^P\bigg[\Big(\int_t^{t+h} |\beta^P_u|\,du\Big)^q \bigg] 
	    &\leq  h^qE^P\Big[\Big(\widetilde{\mathcal{K}}+\widetilde{\mathcal{K}}\sup_{0\leq s \leq h}|X_{t+s}|\Big)^q\Big]\\
	    &\leq h^{q} c_q^2\bigg(\widetilde{\mathcal{K}}^q  + \widetilde{\mathcal{K}}^q |x|^q + \widetilde{\mathcal{K}}^q E^P\Big[\Big(\sup_{0\leq s \leq h}|X_{t+s}-x|\Big)^q\Big]\bigg).
	\end{align*}
	Inserting these inequalities into \eqref{temp:308}, considering $h\leq \varepsilon$ and noting that  $\tilde C_q \ge 1$ 
	implies that
	\begin{align}\label{eq:391}
	\lefteqn{E^P\Big[\sup_{0\leq s \leq h}|X_{t+s}-x|^q\Big]}\hspace{1.5cm}\\
	\leq & 	c_q^3 \, \widetilde C_q \widetilde{\mathcal{K}}^q(h^{q/2} + h^q) E^P\Big[\sup_{0\leq s \leq h}|X_{t+s}-x|^q\Big]
	       +c_q^3 \, \widetilde C_q  \widetilde{\mathcal{K}}^q (1+|x|^q)(h^{q/2} + h^q) \nonumber\\
	\leq & 	c_q^3 \, \widetilde C_q \widetilde{\mathcal{K}}^q(\varepsilon^{q/2} + \varepsilon^q) E^P\Big[\sup_{0\leq s \leq h}|X_{t+s}-x|^q\Big]
	       +c_q^3 \, \widetilde C_q  \widetilde{\mathcal{K}}^q (1+|x|^q)(h^{q/2} + h^q).\nonumber
	\end{align}
	Since $h\leq \varepsilon$ and we chose $0<\varepsilon<1$ such that \eqref{cond:epsilon} holds, we obtain for the constant 
	$C:=\frac{c_q^3 \, \widetilde C_q  \widetilde{\mathcal{K}}^q (1+|x|^q)}{1-c_q^3 \, \widetilde C_q \widetilde{\mathcal{K}}^q(\varepsilon^{q/2} + \varepsilon^q)}>0$, being  independent of $t,h,P$, that
	\begin{equation*}
	     E^P\Big[\sup_{0\leq s \leq h}|X_{t+s}-x|^q\Big] \leq C\,\big(h^{q/2}+h^q\big).
	\end{equation*}
	As $P \in \cA(t,x,\Theta)$ was  chosen  arbitrarily, the claim is proven.
\end{proof}
\begin{remark}\label{rem:martingale}
	The proof of Lemma~\ref{le:flow-estimate}, actually shows that for the corresponding $0<\varepsilon<1$, we have  for all  $0<h\leq\varepsilon$, all $t \in[0,T-h]$ and all $x\in \R$, that the local martingale part $(M^P_{t+s})_{0\leq s\leq h}$ restricted on $[t, t+h]$ is a true martingale for any $P \in \cA(t,x,\Theta)$.
\end{remark}
\begin{lemma}\label{le:value-funct-cont}
	Consider a proper family of non-linear affine processes with state space $\ccO$ and 
	let $\psi:\ccO\to \R$ be Lipschitz continuous with Lipschitz-constant $L_\psi$. Then the value function
	\begin{equation*}
	v(t,x):=\sup_{P \in \cA(t,x,\Theta)}E^P\big[\psi(X_T)\big]
	\end{equation*} 
	is jointly continuous. In particular, $v(t,\cdot)$ is  Lipschitz continuous with constant $L_\psi$ and $v(\cdot,x)$ is locally $1/2$-H\"older continuous. %
\end{lemma}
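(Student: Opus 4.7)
The strategy is to prove spatial Lipschitz continuity first and then derive local $1/2$-H\"older regularity in time from Proposition~\ref{prop:DPP} together with Lemma~\ref{le:flow-estimate}. Joint continuity follows at once.

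For the spatial estimate, fix $t\in[0,T]$, $x,y\in\ccO$ and $\varepsilon>0$, and pick an $\varepsilon$-optimal $P_x\in\cA(t,x,\Theta)$ for $v(t,x)$. The plan is to build a coupling on an enlarged probability space carrying a Brownian motion $W$ and two processes $X^x,X^y$ driven by the same $W$, such that $X^x$ has law $P_x$, the law of $X^y$ lies in $\cA(t,y,\Theta)$, and $X^x,X^y$ stay close. Concretely, using the representation of the continuous local martingale part as a Brownian integral on an enlarged space, realize $X^x$ as the strong solution of $dX^x_s = \beta^x_s\,ds + \sqrt{\alpha^x_s}\,dW_s$, $X^x_t=x$, where $(\beta^x,\alpha^x)$ are the differential characteristics supplied by Definition~\ref{def:nlaffine}. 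Writing the interval $b^*(X^x_s)$ as $[\underline\beta(X^x_s),\bar\beta(X^x_s)]$ from \eqref{eq:affineb*} and picking a predictable weight $\lambda_s\in[0,1]$ with $\beta^x_s = \lambda_s\underline\beta(X^x_s)+(1-\lambda_s)\bar\beta(X^x_s)$, define the companion drift $\beta^y_s := \lambda_s\underline\beta(X^y_s)+(1-\lambda_s)\bar\beta(X^y_s)$; treat $\alpha$ analogously and let $X^y$ solve the resulting SDE with $X^y_t=y$. By the explicit affine form of the endpoints, the gaps are linearly controlled,
\begin{equation*}
|\beta^x_s - \beta^y_s| \le \cK\,|X^x_s - X^y_s|, \qquad |\alpha^x_s - \alpha^y_s| \le \cK\,|X^x_s - X^y_s|,
\end{equation*}
and by construction the marginal law of $X^y$ belongs to $\cA(t,y,\Theta)$.

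Setting $D_s := X^x_s - X^y_s$ and applying It\^o's formula to $D_s^2$ together with the BDG inequality produces an integral inequality of Gronwall type for $E[D_s^2]$. The chief difficulty is controlling $|\sqrt{\alpha^x_s}-\sqrt{\alpha^y_s}|$: under the properness hypothesis either $\underline a^0>0$ gives a uniform lower bound on both variances, or $\underline a^0=\bar a^0=0$ and $\underline b^0\ge\bar a^1/2>0$, in which case Proposition~\ref{prop:positivity} keeps both $X^x,X^y$ strictly positive and hence $\alpha^x,\alpha^y$ are bounded below away from zero on the relevant sets. In either case $|\sqrt{\alpha^x_s}-\sqrt{\alpha^y_s}|^2 \le K'|D_s|^2$, Gronwall yields $E[D_T^2]\le C|x-y|^2$ with $C = C(T,\Theta)$, and therefore
\begin{equation*}
v(t,x) - v(t,y) \le E[\psi(X^x_T) - \psi(X^y_T)] + \varepsilon \le L_\psi\sqrt{C}\,|x-y| + \varepsilon.
\end{equation*}
After symmetrizing and sending $\varepsilon\to 0$, this gives spatial Lipschitz continuity (with a constant depending on $L_\psi,T,\Theta$).

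For the time regularity, fix $x\in\ccO$ and $0\le t_1<t_2\le T$ with $h:=t_2-t_1\le\varepsilon$, the threshold from Lemma~\ref{le:flow-estimate} with $q=1$. Proposition~\ref{prop:DPP} gives $v(t_1,x) = \sup_{P\in\cA(t_1,x,\Theta)} E^P[v(t_2,X_{t_2})]$, so combined with the spatial Lipschitz regularity of $v(t_2,\cdot)$ just established,
\begin{equation*}
|v(t_1,x) - v(t_2,x)| \le L\sup_{P\in\cA(t_1,x,\Theta)} E^P\big[|X_{t_2}-x|\big] \le L\,C\,(h + h^{1/2}) \le L\,C'\,h^{1/2}.
\end{equation*}
For $h>\varepsilon$, split $[t_1,t_2]$ into pieces of length at most $\varepsilon$ and iterate to deduce local $1/2$-H\"older continuity; joint continuity then follows by the triangle inequality. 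The main technical obstacle in this plan is the coupling step: constructing $X^y$ via a measurable selection from the set-valued maps $b^*,a^*$ that keeps $X^y$ in $\ccO$, and arranging the square-root nonlinearity of the diffusion gap so that Gronwall closes — both issues are handled precisely by the properness hypothesis.
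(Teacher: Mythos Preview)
Your treatment of the time variable matches the paper: invoke Proposition~\ref{prop:DPP}, bound $|v(t_2,X_{t_2})-v(t_2,x)|$ by the spatial Lipschitz constant times $|X_{t_2}-x|$, and apply Lemma~\ref{le:flow-estimate} with $q=1$.

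For the spatial variable the paper proceeds very differently and very briefly: it argues in one line that
\[
|v(t,x)-v(t,y)|\le \sup_{P\in\cA(t,x,\Theta)} E^P\big[|\psi(X_T)-\psi(y-x+X_T)|\big]\le L_\psi |x-y|,
\]
i.e.\ it couples by the rigid shift $X\mapsto X+(y-x)$ and obtains the sharp constant $L_\psi$ directly. Your SDE-coupling route is considerably more elaborate and, as written, has a genuine gap. In the CIR regime ($\underline a^0=\bar a^0=0$) you appeal to Proposition~\ref{prop:positivity} to conclude that $\alpha^x,\alpha^y$ are ``bounded below away from zero on the relevant sets''. But that proposition only yields $X_s>0$ almost surely; it does \emph{not} furnish a uniform (deterministic) lower bound, so $\alpha_s=a^1X_s^+$ can be arbitrarily small along the path, and the Lipschitz estimate $|\sqrt{\alpha^x_s}-\sqrt{\alpha^y_s}|^2\le K'|D_s|^2$ is simply not available. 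Without it your Gronwall step does not close; this is exactly the square-root non-Lipschitz issue that makes the CIR case delicate throughout the paper (cf.\ the discussion preceding Proposition~\ref{thm:PDE-uniquenessII}). A secondary point: even in the Vasi\v cek regime where the argument does close, you end up with Lipschitz constant $L_\psi\sqrt{C(T,\Theta)}$ rather than the $L_\psi$ asserted in the lemma, so the statement as claimed is not recovered.
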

\begin{proof}
	For the Lipschitz-continuity of $v(t,\cdot)$, observe that for any $t$ %
	\begin{equation*}
	|v(t,x)-v(t,y)| \leq \sup_{P \in \cA(t,x,\Theta)}E^P\Big[|\psi(X_T)-\psi(y-x+X_T)|\Big]\leq L_\psi|y-x|.
	\end{equation*}
	For the locally $1/2$-H\"older continuity, let $t\in[0,T)$ and $0\leq u \leq T-t$ be small enough. Then the dynamic programming principle derived in Proposition~\ref{prop:DPP}, the Lipschitz continuity of $v(t,\cdot)$ and Lemma~\ref{le:flow-estimate} yield
	\begin{align*}
	|v(t,x)-v(t+u,x)|
	&=\Big|\sup_{P \in \cA(t,x,\Theta)}E^P\big[v(t+u,X_{t+u})-v(t+u,x)\big]\Big|\\
	&\leq L_\psi \sup_{P \in \cA(t,x,\Theta)}E^P\big[|X_{t+u}-x|\big]\\
	&\leq L_\psi C(x,1)\,\big(u^{1/2}+u\big)
	\end{align*}
	with constant $C(x,1)$ from Lemma \ref{le:flow-estimate}. Finally, consider a sequence $(t_n,x_n)$ converging to $(t,x)$. Then
	\begin{align*}
		| v(t_n,x_n) - v(t,x) | & \le |v(t_n,x_n)-v(t_n,x)| + | v(t_n,x)-v(t,x)| \\
		& \le L_\psi |x_n - x| + L_\psi C(x,1) \big( |t_n-t|^{\nicefrac 1 2} + |t_n-t| \big).
	\end{align*}
	Letting $n$ go to infinity yields the result.
\end{proof}

\section{The Kolmogorov equation}\label{sec:Kolmogorov}

In this section we provide the link between the non-linear affine process and the associated non-linear  Kolmogorov equation. More precisely, we relate the non-linear affine process to a (fully) non-linear partial differential equation (PDE). 
This is achieved by a probabilistic construction involving an optimal control problem on the canonical space of continuous paths where the controls are laws of affine-dominated semimartingales. To this end, note  that the affine process $X$ given in Equation \eqref{SDE} is uniquely characterized by its infinitesimal generator,
\begin{align}\label{eq:Kolmogorov}
   \ccL^\theta f(x)= (b^{0} + b^1x) \partial_x f(x)+ \half (a^0 + a^1 x^+)\partial_{xx}f(x). 
\end{align}
This is equivalent to $f(X_t)-f(x)-\int_0^t \ccL^\theta f(X_s)ds, \ 0 \le t \le T$ solving the martingale problem, see, e.g., Theorem 21.7 in \cite{Kallenberg2002}. 
Consider the state space $\ccO$ which will be either $\R$, $\R_{\ge 0}$ or $\R_{>0}$. Fix $\psi:\ccO \to \R$ and consider the fully non-linear PDE
\begin{equation} %
\begin{cases}-\partial_t v(t,x)-G\big(x,\partial_xv(t,x),\partial_{xx}v(t,x)\big) =0 \quad &\mbox{on }[0,T)\times \ccO, \label{eq:def:PDE}\\
\phantom{-\partial_t v(t,x)-G\big(x,\partial_xv(t,x),\partial_{xx}}\,v(T,x)=\psi(x) \quad &x \in \ccO,
\end{cases}
\end{equation}
where $G:\ccO\times \R \times \R \to \R$ is defined by
\begin{align}
G(x,p,q):=\sup_{(b^0,b^1,a^0,a^1)\in \Theta}\Big\{(b^0+b^1x)p+\frac{1}{2}(a^0+a^1x^+) q\Big\}. \label{eq:def:PDE-generator}
\end{align}
The function $-G$ satisfies the degenerate ellipticity condition and as $\Theta$ is compact, it is also continuous. Observe that the PDE defined in \eqref{eq:def:PDE} can be seen as non-linear affine PDE, since for $\theta:=(b^0,b^1,a^0,a^1)$  it is of the form
\begin{equation*}
\begin{cases}-\partial_t v(t,x)-\sup_{\theta \in \Theta}\ccL^\theta v(t,x)=0 \quad &\mbox{on }[0,T)\times \ccO, \nonumber\\
\phantom{-\partial_t v(t,x)-\sup_{\theta \in \Theta}\ccL}\,v(T,x)=\psi(x) & x \in \ccO.
\end{cases}
\end{equation*}

We write $C_b^{2,3}([0,T)\times \ccO)$ for the set of functions on $[0,T)\times \ccO$ having bounded continuous derivatives up to the second and third order in $t$ and $x$, respectively. An
upper semicontinuous function $u$ on $[0,T)\times \ccO$ will be called a viscosity subsolution of~\eqref{eq:def:PDE} if $u(T,\cdot)\leq \psi(\cdot)$ and
\begin{equation*}
-\partial_t \varphi(t,x)-G\big(x,D_x \varphi(t,x), D^2_{xx} \varphi(t,x)\big)\leq 0
\end{equation*}
whenever $\varphi \in C_b^{2,3}([0,T)\times \ccO)$ is such that $\varphi\geq u$ on $[0,T)\times \ccO$ and $\varphi(t,x)= u(t,x)$. %
The definition of a viscosity supersolution is obtained by reversing the inequalities and the semicontinuity. Finally, a 
continuous function is a viscosity solution if it is both sub- and supersolution.

We obtain a stochastic representation for the non-linear affine PDE \eqref{eq:def:PDE}.
\begin{theorem}\label{thm:PDE}
	Consider a proper family of non-linear affine processes with state space $\ccO$ and 
	let $\psi:\ccO \to \R$ be Lipschitz continuous. Then
	\begin{equation*}
	v(t,x):= \sup_{P\in \cA(t,x,\Theta)}E^P\big[\psi(X_T)\big], \quad x \in \ccO 
	\end{equation*}
	is a viscosity solution of the non-linear PDE in \eqref{eq:def:PDE}.
\end{theorem}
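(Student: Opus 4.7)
The strategy combines the dynamic programming principle (Proposition~\ref{prop:DPP}) with It\^o's formula applied to a smooth test function, which is the classical stochastic-control route to viscosity solutions. The terminal condition $v(T,x)=\psi(x)$ is immediate from the definition, and joint continuity of $v$ is already provided by Lemma~\ref{le:value-funct-cont}. It therefore suffices to verify the viscosity sub- and supersolution inequalities at an arbitrary interior point $(t_0,x_0)\in[0,T)\times\ccO$.

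For the \emph{subsolution} property, fix $\varphi\in C_b^{2,3}([0,T)\times\ccO)$ with $\varphi\ge v$ and $\varphi(t_0,x_0)=v(t_0,x_0)$. For every sufficiently small $h>0$, Proposition~\ref{prop:DPP} yields a near-optimal $P_h\in\cA(t_0,x_0,\Theta)$ with $\varphi(t_0,x_0)\le E^{P_h}[\varphi(t_0+h,X_{t_0+h})]+h^2$. I then apply It\^o's formula to $s\mapsto\varphi(t_0+s,X_{t_0+s})$ under $P_h$; by Remark~\ref{rem:martingale} and boundedness of $\partial_x\varphi$ the stochastic integral is a true martingale for $h\le\varepsilon(1)$, and by the very definitions of $b^*$, $a^*$ and $G$ in \eqref{eq:def:PDE-generator} we have the pointwise bound
\[
\beta^{P_h}_s\partial_x\varphi(s,X_s)+\tfrac{1}{2}\alpha_s\partial_{xx}\varphi(s,X_s)\le G\bigl(X_s,\partial_x\varphi(s,X_s),\partial_{xx}\varphi(s,X_s)\bigr).
\]
Dividing by $h$ yields
\[
-h\le\frac{1}{h}E^{P_h}\!\Big[\int_0^h\!\bigl(\partial_t\varphi+G(\cdot,\partial_x\varphi,\partial_{xx}\varphi)\bigr)(t_0+s,X_{t_0+s})\,ds\Big].
\]
Letting $h\downarrow 0$, continuity of $G$ (granted by compactness of $\Theta$) together with the flow estimate of Lemma~\ref{le:flow-estimate} and dominated convergence produce $0\le\partial_t\varphi(t_0,x_0)+G\bigl(x_0,\partial_x\varphi(t_0,x_0),\partial_{xx}\varphi(t_0,x_0)\bigr)$, which is precisely the subsolution inequality.

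For the \emph{supersolution} property, take $\varphi\le v$ with $\varphi(t_0,x_0)=v(t_0,x_0)$, and fix any $\theta=(b^0,b^1,a^0,a^1)\in\Theta$. The plan is to construct a semimartingale law $P_\theta$ realising the constant-parameter SDE
\[
dX_s=(b^0+b^1X_s)\,ds+\sqrt{a^0+a^1X_s^+}\,dW_s,\qquad X_{t_0}=x_0,\ s\in[t_0,T].
\]
Weak existence follows from Skorokhod's theorem, since the coefficients are continuous and of sublinear growth. The properness hypothesis ensures $X$ stays in $\ccO$ under $P_\theta$: in the case $\ccO=\R$ there is nothing to check, while in the case $\ccO=\R_{>0}$ the assumption $\underline b^0\ge\bar a^1/2$ forces the Feller condition $b^0\ge a^1/2$ for every $\theta\in\Theta$, so $P_\theta\in\cA(t_0,x_0,\Theta)$ by construction. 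Proposition~\ref{prop:DPP} then gives $\varphi(t_0,x_0)\ge E^{P_\theta}[\varphi(t_0+h,X_{t_0+h})]$; the same It\^o expansion as above, with drift and diffusion now exactly those induced by $\theta$, produces after division by $h$ and the limit $h\downarrow 0$
\[
0\ge\partial_t\varphi(t_0,x_0)+\ccL^\theta\varphi(t_0,x_0).
\]
Taking the supremum over $\theta\in\Theta$ and recalling \eqref{eq:def:PDE-generator} delivers the required supersolution inequality.

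The principal obstacle is the supersolution step, in which for every $\theta\in\Theta$ one must produce a semimartingale law $P_\theta$ whose affine characteristics are exactly those dictated by $\theta$ on $[t_0,T]$ and under which the canonical process stays inside $\ccO$; this is precisely where the \emph{proper} assumption enters, through the Feller boundary classification on the positive half-line and through the non-degeneracy $\underline a^0>0$ on the real line. A secondary technical matter, the true-martingale property of the It\^o integral over the short horizon $[t_0,t_0+h]$, is already handled by Remark~\ref{rem:martingale} once $h$ is chosen below the threshold from Lemma~\ref{le:flow-estimate}.
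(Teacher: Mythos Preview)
Your proposal is correct and follows essentially the same route as the paper: continuity from Lemma~\ref{le:value-funct-cont}, then DPP (Proposition~\ref{prop:DPP}) combined with It\^o's formula and the flow estimate of Lemma~\ref{le:flow-estimate} to obtain the sub- and supersolution inequalities. The paper actually carries out only the subsolution half in detail and dismisses the supersolution with ``proved similarly''; your explicit construction of a constant-parameter law $P_\theta\in\cA(t_0,x_0,\Theta)$ for each $\theta\in\Theta$ is exactly what that phrase hides, and your observation that the properness assumption is what guarantees $P_\theta$ respects the state space is on point. Two minor stylistic differences worth noting: (i) rather than selecting a near-optimiser $P_h$, the paper bounds $E^P[\varphi(t+u,X_{t+u})-\varphi(t,x)]$ by a quantity independent of $P$ and then passes to the supremum, which avoids the $h^2$ bookkeeping; (ii) instead of invoking ``dominated convergence'', the paper tracks explicit error terms of order $u^{3/2}$ via the Lipschitz property of $\partial_t\varphi,\partial_x\varphi,\partial_{xx}\varphi$ and Lemma~\ref{le:flow-estimate} --- this is more robust here because the measure $P_h$ varies with $h$, so a naive dominated-convergence appeal would need the uniformity in $P$ that Lemma~\ref{le:flow-estimate} provides anyway.
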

\begin{proof}
	The proof essentially follows the well-known standard arguments in stochastic control, see e.g. the proof of \cite[Proposition~5.4]{NeufeldNutz2017}.
	
	By Lemma \ref{le:value-funct-cont}, $v(t,x)$ is continuous on $[0,T)\times \R$, and we have $v(T,x)= \psi(x)$ by the definition of $v$. We show that $v$ is a viscosity subsolution of the non-linear affine PDE defined in \eqref{eq:def:PDE}; the supersolution property is proved similarly. We remark that in the subsequent lines within this proof, $C>0$ is a constant whose values may change from line to line.
	
	Let $(t,x) \in [0,T)\times \R$ and let $\varphi \in C^{2,3}_b([0,T)\times \R^d)$ be such that $\varphi\geq v$ and $\varphi(t,x)=v(t,x)$. By the dynamic programming principle obtained in Proposition~\ref{prop:DPP} we have for any $0<u<T-t$ that
	\begin{align}\label{eq:PDE-DPP}
	    0&=\sup_{P\in \cA(t,x,\Theta)}E^P\big[v(t+u,X_{t+u})-v(t,x)\big] \nonumber\\
	     &\leq \sup_{P\in \cA(t,x,\Theta)}E^P\big[\varphi(t+u,X_{t+u})-\varphi(t,x)\big]. 
	\end{align}
	Fix any $P\in \cA(t,x,\Theta)$, denote as above by  $(\beta^P,\alpha)$ the differential characteristics of the continuous semimartingale $X$ under $P$,
  and denote by $M^P$ the $P$-local martingale part of the $P$-semimartingale $X$.
	Then, It\^o's formula yields
	\begin{align}
	\varphi (t+u,&X_{t+u}) - \varphi(t,x) =
	\int_0^u \partial_t \varphi(t+s,X_{t+s}) \,ds
	+
	\int _0^u \partial_x \varphi(t+s,X_{t+s}) \,dM^P_{t+s} \nonumber\\
	& + \int_0^u \partial_x \varphi(t+s,X_{t+s}) \beta_{t+s}^P \,ds 
	+ \frac{1}{2} \int_0^u \partial_{xx} \varphi (t+s,X_{t+s}) \alpha_{t+s} \,ds. \label{eq:PDE1}
	\end{align}
	As $\varphi \in C_b^{2,3}([0,T)\times \R)$,  $\partial_x \varphi$ is uniformly bounded, thus by Remark~\ref{rem:martingale}, we see that for small enough $0<u<T-t$ the local martingale part in \eqref{eq:PDE1} is in fact a true martingale, starting at $0$. In particular, its expectation vanishes. %
	The next step is to estimate the expectation of the other terms. In this regard, note that
	\begin{align}
	 E^P\bigg[ \int_0^u &\partial_x \varphi(t+s,X_{t+s}) \beta^P_{t+s}\, ds\bigg]  \nonumber\\
	\leq & \ \int_0^u E^P \bigg[ \big| \partial_x \varphi(t+s,X_{t+s}) - \partial_x \varphi (t,x)\big| \, |\beta^P_{t+s}| +
	\partial_x \varphi(t,x) \beta^P_{t+s} \bigg]\, ds. \label{eq:PDE2}
	\end{align}
	Since $\varphi \in C^{2,3}_b$, $\partial_x \varphi$ is Lipschitz. Hence we obtain with the constant $\cK$ from Equation \eqref{def:constant-K} together with 
	 Lemma~\ref{le:flow-estimate} that for small enough $u$,
	\begin{align}
	 \ \int_0^u E^P \Big[ \big| &\partial_x \varphi(t+s,X_{t+s}) - \partial_x \varphi (t,x)\big| \cdot |\beta_{t+s}^P| \Big]\, ds \nonumber\\
	\leq & \  C\int_0^u E^P \Big[ \big(s+\sup_{0\leq v\leq u}|X_{t+v}-x|\big)  \cdot |\beta^P_{t+s}| \Big]\, ds \nonumber\\
	\leq & \  C\int_0^u E^P \Big[ \big(s+\sup_{0\leq v\leq u}|X_{t+v}-x|\big)\, \big(\mathcal{K} +\mathcal{K} \sup_{0\leq v\leq u}|X_{t+v}|\big)\Big]\, ds \nonumber\\
	\leq & \  C\int_0^u E^P \Big[ \big(s+\sup_{0\leq v\leq u}|X_{t+v}-x|\big)\, \big(\mathcal{K} +\mathcal{K} |x| +\mathcal{K} \sup_{0\leq v\leq u}|X_{t+v}-x|\big)\Big]\, ds \nonumber\\
	\leq & \ C\big(u^3 +u^{5/2}+ u^2 + u^{3/2}\big). \label{eq:PDE3}
	\end{align}
	Inserting \eqref{eq:PDE3} into \eqref{eq:PDE2} yields
	\begin{align} \label{eq:PDE4}
	  	   \lefteqn{ E^P \bigg[ \int_0^u \partial_x \varphi(t+s,X_{t+s}) \beta^P_{t+s}\, ds\bigg]} \hspace{2cm} \nonumber\\
	   & \leq \int_0^u E^P\Big[\partial_x \varphi(t,x) \,\beta^P_{t+s}\Big]\, ds + C\big(u^3 +u^{5/2}+ u^2 + u^{3/2}\big). 
	\end{align}
	The same argument applied to $\partial_{xx} \varphi$ leads to
	\begin{align} \label{eq:PDE5}
	\lefteqn{E^P\bigg[ \int_0^u \partial_{xx} \varphi(t+s,X_{t+s})\, \alpha_{t+s}\, ds\bigg] } \hspace{2cm} \nonumber\\
	    & \leq \int_0^u E^P\Big[\partial_{xx} \varphi(t,x)\, \alpha_{t+s}\Big]\, ds + C\big(u^3 +u^{5/2}+ u^2 + u^{3/2}\big).
	\end{align}
	Moreover, by a similar calculation, we have
	\begin{align}
	 E^P\bigg[ \int_0^u \partial_t &\varphi(t+s,X_{t+s})\, ds\bigg]  \nonumber\\
	\leq & \ \int_0^u \partial_t \varphi (t,x) \,ds + \int_0^u E^P \Big[ \big| \partial_t \varphi(t+s,X_{t+s}) - \partial_t \varphi (t,x)\big| \Big] \, ds \nonumber\\
	\leq & \ \int_0^u \partial_t \varphi (t,x) \,ds + C\int_0^u E^P \Big[s+\sup_{0\leq v\leq u}|X_{t+v}-x| \Big]\, ds \nonumber\\
	\leq & \ \int_0^u \partial_t \varphi (t,x) \,ds + C\big(u^2 + u^{3/2}\big). \label{eq:PDE6}
	\end{align}
	As above we write $\theta:=(b^0,b^1,a^0,a^1)$ for an element in $\Theta$.
	Then, by taking expectations in \eqref{eq:PDE1} and using \eqref{eq:PDE2}--\eqref{eq:PDE6}  yields
	\begin{align}
	\lefteqn{ E^P\Big[\varphi (t+u,X_{t+u})- \varphi(t,x)\Big] 
	      \leq  C\big(u^3 +u^{5/2}+ u^2 + u^{3/2}\big) } \quad \nonumber\\
	      &+ \int_0^u \Big(\partial_t \varphi (t,x) + E^P\big[
	       \partial_x \varphi(t,x) \,\beta^P_{t+s}
	       + \partial_{xx} \varphi(t,x)\, \alpha_{t+s}\big] \Big) \,ds \ \nonumber\\ 
          &\le  C\big(u^3 +u^{5/2}+ u^2 + u^{3/2}\big) + u \partial_t \varphi (t,x)\nonumber \\
          &+ \int_0^u E^P\Big[ \sup_{\theta \in \Theta} \Big\{(b^0+b^1 X_{t+s})\,\partial_x \varphi(t,x) + \frac{1}{2} (a^0+a^1 X_{t+s}^+)\,\partial_{xx} \varphi(t,x) \Big\}\Big] .\label{eq:PDE7}
	\end{align}
	Here the supremum turns out to be  $G(X_{t+s},\partial_x \varphi(t,x),\partial_{xx}\varphi(t,x))$. 
	Note that by the very definition of $G$,
	\begin{align*}
		G(X_{t+s},p,q) 
		& \le G(x,p,q) + \sup_{\theta \in \Theta}\Big\{ |b^1| \, |X_{t+s}-x| \, | p| +
		  |a^1| \, |X_{t+s}-x| \, | q|
		\Big\}.
	\end{align*}
	Therefore, by using that $\varphi \in C^{2,3}_b$, the definition of the constant $\cK$ in \eqref{def:constant-K} and Lemma~\ref{le:flow-estimate}, we have
	\begin{align}
    	\lefteqn{\int_0^u E^P\Big[ G(X_{t+s},\partial_x \varphi(t,x),\partial_{xx}\varphi(t,x)) \Big\}\Big] \,ds }\hspace{2cm} \nonumber \\
	    \leq & \ u  G(x,\partial_x \varphi(t,x),\partial_{xx}\varphi(t,x))
	    + u C\cK E^P\big[|X_{t+s}-x|\big] \nonumber\\
	    \leq & \ u  G(x,\partial_x \varphi(t,x),\partial_{xx}\varphi(t,x)) +  C\cK \big(u^2 +u^{3/2}\big).  \label{eq:PDE8}  
	\end{align}
	Combining \eqref{eq:PDE7}--\eqref{eq:PDE8} yields
	\begin{align}
 E^P\Big[\varphi (t+u,X_{t+u})- \varphi(t,x)\Big] 
	    &\leq  \  u \partial_t \varphi (t,x) +  u  G(x,\partial_x \varphi(t,x),\partial_{xx}\varphi(t,x))
	    \nonumber \\
	    & \ +  C\big(u^3 +u^{5/2}+ u^2 + u^{3/2}\big). \label{eq:PDE9}  
	\end{align}
	for some constant $C>0$ which is independent of $P$. As the choice of $P \in \cA(t,x,\Theta)$ was arbitrary, we deduce from
	\eqref{eq:PDE-DPP} that
	\begin{align}
	0 &\leq \sup_{P\in \cA(t,x,\Theta)}E^P\big[\varphi(t+u,X_{t+u})-\varphi(t,x)\big]\nonumber\\
	&\leq u \partial_t \varphi (t,x) +  u  G(x,\partial_x \varphi(t,x),\partial_{xx}\varphi(t,x)) 
	 +  C\big(u^3 +u^{5/2}+ u^2 + u^{3/2}\big). \label{eq:PDE10}
	\end{align}
	By dividing first in \eqref{eq:PDE10} by $-u$ and then let $u$ go to zero, we obtain that
	\begin{equation*} 
	- \partial_t \varphi (t,x) - G(x,\partial_x \varphi(t,x),\partial_{xx}\varphi(t,x)) \leq 0,
	\end{equation*}
	which proves that $v$ is indeed a viscosity subsolution as desired. 
\end{proof}

\subsection{Uniqueness}
Uniqueness in our framework is not covered by standard arguments as in \cite{fleming-soner-06} or \cite{crandall-ishii-lions-92}, since the diffusion coefficient does not satisfy a global Lipschitz condition. 
This is a well-known difficulty already discussed in \cite{Feller}. To overcome this, we have to distinguish the two cases where the state space is either $\R$ or $\R_{>0}$.

We begin with the general case which covers the non-linear Vasi\v cek-CIR model discussed in detail in Section \ref{sec:non-linearVasicekCIR}. In this case we do not decide a priori whether a Vasi\v cek or a Cox-Ingersoll-Ross (CIR) model takes place and therefore consider the full space $\R$ as state space. To achieve this, we assume throughout a non-vanishing volatility, i.e.~$\underline a^0 >0$. When the process reaches zero from above, this avoids that a deterministic behaviour on $\R_{<0}$ takes place and that singularities arise. 
On the other side, we do not need any assumptions on $a^1$.

\begin{proposition}\label{thm:PDE-uniquenessI}
	Assume that $\underline a^0>0$ and let $\ccO=\R$. Then, 
	for any given continuous and bounded function
	 $\psi:\R \to \R$, 
    the non-linear  PDE introduced in \eqref{eq:def:PDE}--\eqref{eq:def:PDE-generator}
	admits at most one viscosity solution $v(t,x)$ on $[0,T]\times\R$ satisfying the terminal conditions
	\begin{align*}
	v(T,x)&=\psi(x), \quad x\in \R.
	\end{align*} 
\end{proposition}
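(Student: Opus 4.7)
The plan is to derive uniqueness from a standard comparison principle: if $u$ is a bounded upper-semicontinuous viscosity subsolution and $w$ is a bounded lower-semicontinuous viscosity supersolution with $u(T,\cdot)\le w(T,\cdot)=\psi=v(T,\cdot)$, then $u\le w$ on $[0,T]\times\R$. Applying this with $(u,w)=(v_1,v_2)$ and with $(u,w)=(v_2,v_1)$ for any two bounded viscosity solutions gives $v_1=v_2$. Since $\psi$ is bounded, restricting attention to bounded solutions is natural.

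For the comparison principle I would use the Crandall--Ishii--Lions doubling-of-variables machinery adapted to the unbounded domain $\R$. Assuming for contradiction that $M:=\sup_{[0,T]\times\R}(u-w)>0$, I introduce for $\epsilon,\delta,\eta>0$ the auxiliary functional
\begin{equation*}
\Phi_{\epsilon,\delta,\eta}(t,s,x,y):= u(t,x)-w(s,y) - \frac{(x-y)^{2}}{2\epsilon} - \frac{(t-s)^{2}}{2\epsilon} - \delta(|x|^{2}+|y|^{2}) - \frac{\eta}{T-t}.
\end{equation*}
The $\delta$-term forces the supremum to be attained at some point $(\hat t,\hat s,\hat x,\hat y)$ (coercivity on $\R$), the $\eta/(T-t)$-term keeps $\hat t<T$, and a classical lemma (e.g.\ \cite[Lemma~3.1]{crandall-ishii-lions-92}) yields, first sending $\delta,\eta\to 0$ in the right order, that $|\hat x-\hat y|\to 0$ and, crucially, $(\hat x-\hat y)^{2}/\epsilon\to 0$ as $\epsilon\to 0$, while $u(\hat t,\hat x)-w(\hat s,\hat y)\to M$. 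Ishii's lemma then produces scalars $X,Y$ with
\begin{equation*}
\begin{pmatrix} X & 0 \\ 0 & -Y \end{pmatrix} \le \frac{3}{\epsilon}\begin{pmatrix} 1 & -1 \\ -1 & 1 \end{pmatrix} + O(\delta),
\end{equation*}
together with the viscosity inequalities $-p_{t}^{u}-G(\hat x,p_{x}^{u},X)\le 0$ and $-p_{t}^{w}-G(\hat y,p_{y}^{w},Y)\ge 0$, where $p_{x}^{u}=(\hat x-\hat y)/\epsilon+2\delta\hat x$ and $p_{y}^{w}=(\hat x-\hat y)/\epsilon-2\delta\hat y$.

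Subtracting and using $\sup_{\theta}A-\sup_{\theta}B\le\sup_{\theta}(A-B)$, the task reduces to bounding, uniformly in $\theta\in\Theta$,
\begin{equation*}
(b^{0}+b^{1}\hat x)p_{x}^{u} - (b^{0}+b^{1}\hat y)p_{y}^{w} + \tfrac{1}{2}(a^{0}+a^{1}\hat x^{+})X - \tfrac{1}{2}(a^{0}+a^{1}\hat y^{+})Y.
\end{equation*}
The drift contribution splits into $b^{1}(\hat x-\hat y)\cdot(\hat x-\hat y)/\epsilon$ plus an $O(\delta(1+|\hat x|^{2}+|\hat y|^{2}))$ term coming from the $\delta$-penalty; the first vanishes since $(\hat x-\hat y)^{2}/\epsilon\to 0$. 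For the diffusion contribution, write $\sigma(x)=\sqrt{a^{0}+a^{1}x^{+}}$ and apply the matrix inequality to the vector $(\sigma(\hat x),\sigma(\hat y))$, giving the pointwise bound
\begin{equation*}
\sigma(\hat x)^{2}X - \sigma(\hat y)^{2}Y \le \tfrac{3}{\epsilon}\bigl(\sigma(\hat x)-\sigma(\hat y)\bigr)^{2} + O(\delta).
\end{equation*}
Here is where the hypothesis $\underline a^{0}>0$ enters decisively: it gives $\sigma(x)\ge\sqrt{\underline a^{0}}>0$, hence
\begin{equation*}
|\sigma(\hat x)-\sigma(\hat y)| = \frac{a^{1}|\hat x^{+}-\hat y^{+}|}{\sigma(\hat x)+\sigma(\hat y)} \le \frac{\bar a^{1}}{2\sqrt{\underline a^{0}}}\,|\hat x-\hat y|,
\end{equation*}
so $\sigma$ is globally Lipschitz and the right-hand side is bounded by $C(\hat x-\hat y)^{2}/\epsilon\to 0$. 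Combining the two contributions, sending $\epsilon\to 0$ and then $\delta,\eta\to 0$, and using the standard time-penalty argument to handle $p_{t}^{u}-p_{t}^{w}$, one obtains $M\le 0$, contradicting $M>0$.

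The main obstacle is precisely the linear growth of the affine coefficients on the unbounded state space $\R$: without a nondegeneracy hypothesis, $\sqrt{a^{0}+a^{1}x^{+}}$ is only H\"older-$1/2$, and the standard Crandall--Ishii--Lions estimate $(\sigma(\hat x)-\sigma(\hat y))^{2}/\epsilon$ would fail to vanish. The assumption $\underline a^{0}>0$ is exactly what converts the square-root diffusion into a globally Lipschitz function of the state, restoring the applicability of the standard comparison machinery; the polynomial growth of the drift is controlled by the $\delta|x|^{2}$ penalty combined with the boundedness of $u$ and $w$. The case $\ccO=\R_{>0}$ alluded to in the introduction of the subsection is genuinely more delicate (and presumably treated separately), as there $\underline a^{0}=0$ and one has to combine Feller-type boundary arguments with the comparison principle on the open half-line.
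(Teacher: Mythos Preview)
Your proposal is correct and takes essentially the same approach as the paper: the paper's proof consists of the single observation that $\underline a^0>0$ makes the diffusion coefficient globally Lipschitz, after which it simply invokes the standard comparison procedure from Fleming--Soner (Section~V.9 and Corollary~V.8.1) for unbounded domains. You have unpacked exactly that procedure---the Crandall--Ishii--Lions doubling of variables with a quadratic penalty to handle the unbounded state space---and correctly isolated the one nontrivial point, namely the estimate
\[
|\sigma(\hat x)-\sigma(\hat y)|\le \frac{\bar a^{1}}{2\sqrt{\underline a^{0}}}\,|\hat x-\hat y|,
\]
which is precisely where the hypothesis $\underline a^{0}>0$ is consumed. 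One small remark: you restrict to \emph{bounded} viscosity solutions, which the paper's statement does not make explicit; this is harmless here since $\psi$ is bounded and the Fleming--Soner argument the paper cites operates in the same class, but it is worth noting that the uniqueness claim is implicitly within this (or an analogous growth) class.
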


\begin{proof}
   Uniqueness in this case follows by the observation that the coefficients are Lipschitz once $\underline a^0>0$. 
   Indeed, then following the standard procedure detailed in Section V.9 in \cite{fleming-soner-06} allows to extend the uniqueness results from Corollary V.8.1 therein to an unbounded domain as considered here.
\end{proof}

On the other side if we consider the case where $\R_{>0}$ is the state space, we will necessarily require $\underline a^0 = \bar a^0 = 0.$ Then, the (bounds on the) diffusion coefficient do no longer satisfy a global Lipschitz property and the standard methodology can not be applied. To the best of our knowledge, only \cite{ConstantiniPapiDIppoliti} and \cite{Amadori2007} treat this setting while we will apply here the techniques from the second article. 

In this regard, let
\begin{align}
h_\epsilon(x) &= \begin{cases}
\max\Big( \frac 1 {\log(x)}, 1 \Big) & \epsilon=0 \\
\max\Big( \frac 1 {x^\epsilon},1 \Big) & \text{otherwise.}
\end{cases}
\end{align}

\begin{proposition}\label{thm:PDE-uniquenessII}
	Assume that $\underline a^0=\bar a^0=0$, $\underline b^0 \ge \nicefrac{\bar a^1} 2>0$, let $\ccO=\R_{>0}$, and
	consider a Lipschitz-continuous  $\psi:\ccO \to \R$. 
	\begin{enumerate}[(i)]
	\item Then,  \eqref{eq:def:PDE}--\eqref{eq:def:PDE-generator}
	has a one and only one viscosity solution $v$ on $[0,T]\times\ccO$ satisfying
	\begin{align}\label{eq:694}
	\sup_{(t,x) \in [0,T]\times \ccO} \frac{|v(t,x)|}{1+x} < \infty.
	\end{align}
	\item Let $\epsilon = \nicefrac{2 \underline b^0}{\bar a^1} -1 >0$ and suppose that there is $\rho,\rho'>0$
	and $\epsilon'\in [0,\epsilon)$ such that $\frac{|\psi(x)|}{1+	|x|}$ is bounded on $(0,\infty)$ and
	\begin{align}
	  x \mapsto \frac{|\psi(x)|}{h_{\epsilon'}(x)}
	\end{align}
	is bounded from above respectively in the set $(\rho,\infty)$ and $(0,\rho')$. Then  \eqref{eq:def:PDE}--\eqref{eq:def:PDE-generator}
	has one and only one viscosity solution $v(t,x)$ on $[0,T]\times\ccO$ satisfying
	\begin{align}
	   & \limsup_{x \to \infty} \sup_{t \in [0,T]} \frac{|v(t,x)|}{x^2} = 0, \quad
        \lim_{x \to 0} \sup_{t \in [0,T]} \frac{|v(t,x)|}{h_\epsilon(x)} = 0.
	\end{align}
		\end{enumerate}
\end{proposition}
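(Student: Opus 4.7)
Existence for both parts is already granted by Theorem~\ref{thm:PDE}: the value function
\[
 v(t,x) := \sup_{P\in \cA(t,x,\Theta)} E^P[\psi(X_T)]
\]
is a viscosity solution of \eqref{eq:def:PDE}--\eqref{eq:def:PDE-generator}, so the task reduces to verifying that $v$ belongs to the prescribed growth class and to establishing a comparison principle within that class.

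For the growth at infinity I would iterate Lemma~\ref{le:flow-estimate} on a finite partition of $[0,T]$ of mesh $\le \varepsilon(1)$ and apply Gronwall to obtain $\sup_{P\in\cA(t,x,\Theta)} E^P[|X_T|] \le C(1+x)$ uniformly in $t$. Combined with the Lipschitz hypothesis (part~(i)) or the additional boundedness-at-infinity hypothesis on $\psi$ (part~(ii)), this yields the bound in \eqref{eq:694} and the $o(x^2)$ bound at infinity, respectively. For the behaviour of $v$ at $x = 0$ in part~(ii), I would reuse the inverse-moment bound \eqref{temp260} from the proof of Proposition~\ref{prop:positivity}: taking $m = \epsilon$ and letting the inner $\varepsilon\downarrow 0$ with Fatou gives
\[
\sup_{P\in\cA(t,x,\Theta)} E^P\bigl[X_T^{-\epsilon}\bigr] \le x^{-\epsilon}\,e^{\epsilon|\underline b^1|T},
\]
so the hypothesis $|\psi|/h_{\epsilon'}$ bounded near $0$ for some $\epsilon'<\epsilon$, together with the strict gap $\epsilon-\epsilon'>0$, forces $|v(t,x)|/h_\epsilon(x)\to 0$ as $x\downarrow 0$ via dominated convergence.

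The substantive step is the comparison principle, for which I would follow \cite{Amadori2007}. Given an upper-semicontinuous subsolution $u$ and a lower-semicontinuous supersolution $w$ in the same growth class, I apply doubling of variables with
\[
\Phi_{\eta,\kappa,\gamma}(t,x,y) := u(t,x)-w(t,y)-\tfrac{(x-y)^{2}}{2\eta}-\kappa\bigl(\chi(x)+\chi(y)\bigr)-\tfrac{\gamma}{T-t},
\]
where the weight $\chi$ is chosen so that (1)~the maximum of $\Phi_{\eta,\kappa,\gamma}$ is attained in a compact subset of $[0,T)\times(0,\infty)^2$ bounded away from both $0$ and $\infty$, and (2)~the product $e^{\lambda(T-t)}\chi$ is a classical supersolution of the $G$-equation for $\lambda$ large. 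For part~(i), $\chi(x) = (1+x)^{1+\delta}$ with small $\delta>0$ is enough; for part~(ii) I take $\chi(x) = h_\epsilon(x) + x^{1+\delta}$, where $h_\epsilon$ is a classical $G$-supersolution near $0$ precisely because the Feller threshold $\epsilon = 2\underline b^0/\bar a^1-1$ is calibrated so that $\underline b^0 \ge \tfrac{\bar a^1}{2}(\epsilon+1)$, making the coefficient of the leading $x^{-\epsilon-1}$ term in $\ccL^\theta h_\epsilon$ non-positive uniformly in $\theta\in\Theta$. After invoking the Crandall--Ishii--Lions lemma (\cite{crandall-ishii-lions-92}) at the maximiser $(\hat t,\hat x,\hat y)$, the critical term $G(\hat x,p,M) - G(\hat y,p,N)$ is controlled in the usual way: since $G$ depends \emph{affinely}, not merely H\"older-ly, on $x$, the residual factors through $\hat x - \hat y$ and is absorbed by the quadratic penalty $(\hat x - \hat y)^2/\eta$. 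Sending $\eta\downarrow 0$, then $\gamma\downarrow 0$, then $\kappa\downarrow 0$ in the standard order yields $u \le w$.

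\textbf{The main obstacle} is the simultaneous degeneracy of the diffusion at $x=0$ and the unboundedness of the state space at $\infty$, which rules out off-the-shelf comparison principles of the type in \cite{crandall-ishii-lions-92} or \cite{fleming-soner-06}. Both ends of the domain must be penalised by weights adapted to the operator, and at the boundary $0$ this is only possible thanks to the Feller condition $\underline b^0 \ge \bar a^1/2$, which is exactly the threshold at which $h_\epsilon$ becomes a classical $G$-supersolution; the strict inequality $\epsilon' < \epsilon$ in the hypothesis on $\psi$ then guarantees that the admissible class for $v$ leaves enough room for $h_\epsilon$ (the critical weight) to serve as the penalty. Adapting the localisation argument of \cite{Amadori2007}, originally formulated for a single linear operator, to the family $\{\ccL^\theta : \theta \in \Theta\}$ and rephrasing it through the nonlinear $G$ is where the real technical content sits.
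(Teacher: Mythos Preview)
Your overall architecture matches the paper's: existence via Theorem~\ref{thm:PDE}, uniqueness via the comparison principle of \cite{Amadori2007}. Your explicit verification that the value function actually lies in the prescribed growth class (iterating Lemma~\ref{le:flow-estimate} for the bound at infinity, and reusing the inverse-moment estimate \eqref{temp260} for the behaviour near $0$) is a genuine addition; the paper's proof is terse on this point and essentially takes it for granted.

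Where you go wrong is in your assessment of what remains to be done for the comparison step. You write that \cite{Amadori2007} is ``originally formulated for a single linear operator'' and that the ``real technical content'' is adapting it to the family $\{\ccL^\theta:\theta\in\Theta\}$. That is a mischaracterisation: Amadori's paper treats precisely singular \emph{HJB} equations, i.e.\ equations of the form $-\partial_t v-\sup_{\theta\in\Theta}\ccL^\theta v=0$ with degenerate coefficients, arising from a controlled diffusion exactly as in \eqref{app:controlleddiffusion}. The paper exploits this and, in Appendix~\ref{app:COMP}, merely checks Amadori's structural hypotheses---Assumption~1 (uniform local Lipschitz continuity of $b(\cdot,\theta)$ and $\sqrt{a(\cdot,\theta)}$ on $[\epsilon,\infty)$ for every $\epsilon>0$) and Assumption~4 (the $\limsup$ condition at $0$, which is exactly where the Feller inequality $\underline b^0\ge \bar a^1/2$ enters)---and then invokes Amadori's Theorem~4 as a black box (restated as Theorem~\ref{thm:Amadori}). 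So the doubling-of-variables argument with the weighted penalties that you sketch is already carried out in the cited reference; there is no adaptation from linear to nonlinear to perform. Your proposed weights and the role you assign to the Feller threshold are consistent with Amadori's construction, but the work you flag as the hard part has been outsourced.
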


\begin{proof}
Positivity follows readily from Proposition \ref{prop:positivity}. This will allow to treat the non-linear PDE  in \eqref{eq:def:PDE}--\eqref{eq:def:PDE-generator} on the state space $\ccO=\R_{>0}$. The key for uniqueness is that on compact subsets of $\R_{>0}$  the square-root \emph{is} Lipschitz continuous. 

To begin with, note that existence of a solution under the Lipschitz property of $\psi$ follows from Theorem \ref{thm:PDE}.
Furthermore, Theorem 4 in \cite{Amadori2007} yields the desired uniqueness in our case. We recall this result together with its assumptions in Appendix \ref{app:COMP}. The claim the follows from Theorem \ref{thm:Amadori}.
\end{proof}

\begin{remark}
Already the results in the case without uncertainty in \cite{ConstantiniPapiDIppoliti} show that these results do not generalize to $\R_{\ge 0}$, because then the Lipschitz property on compact subsets which is crucially used in the proof will no longer be satisfied. Moreover, Example 1 in \cite{Amadori2007} shows that the condition $\underline b^0 \ge \nicefrac{\bar a^1} 2>0$ is indeed necessary for uniqueness.
\end{remark}

It is time to accommodate the established results with some first  motivating examples. 
We will begin with classical affine processes under parameter uncertainty, leading to non-linear affine processes. Thereafter we show how to extend beyond this case and introduce classes of non-linear affine processes which do not have a classical counterpart. 

\subsection{The non-linear Vasi\v cek model}\label{non-linearVasicek}
The first example of an affine model is the so-called Vasi\v cek model, see for example \cite{Filipovic2009}. It is a Gaussian Ornstein-Uhlenbeck process and is obtained as the strong solution of the SDE in Equation \eqref{SDE} by considering $a^1=0$. Introducing parameter uncertainty we arrive at the non-linear affine process with 
$[\underline a^1,\bar a^1]=\{0\}$. We call this case the \emph{non-linear Vasi\v cek model}.

While in the case with no parameter uncertainty, this model can be characterized efficiently by its Fourier transforms and the associated Riccati equation, this is no longer  possible here (except for the special case where $\underline b^1 = \bar b^1$, see Remark \ref{remarktoprop:icx} below) and one has to rely on numerical techniques.  In one dimension, this does not at all pose a problem,  see for example \cite{Heider}. To illustrate this, we solve the equation for the simplest pay-off, $f(x)=e^x$. The result is shown in Figure \ref{fig:Vasicekexp}. 

\begin{figure}[h]
\begin{tabular}{cc}
	\begin{minipage}{3cm}
	\begin{tabular}{cc}\toprule & value \\ \midrule
	$\overline b_0$ & 0.15\\
	$\overline b_1$ & -0.5\\
	$\underline b_1$ & -3\\
	$\overline a_0$ & $0.02$\\
	\bottomrule
	\end{tabular} \end{minipage} & \hspace{-15mm}
	\begin{minipage}{9.5cm}
	 \begin{overpic}[width=11cm, trim=0 0 0 10, clip]{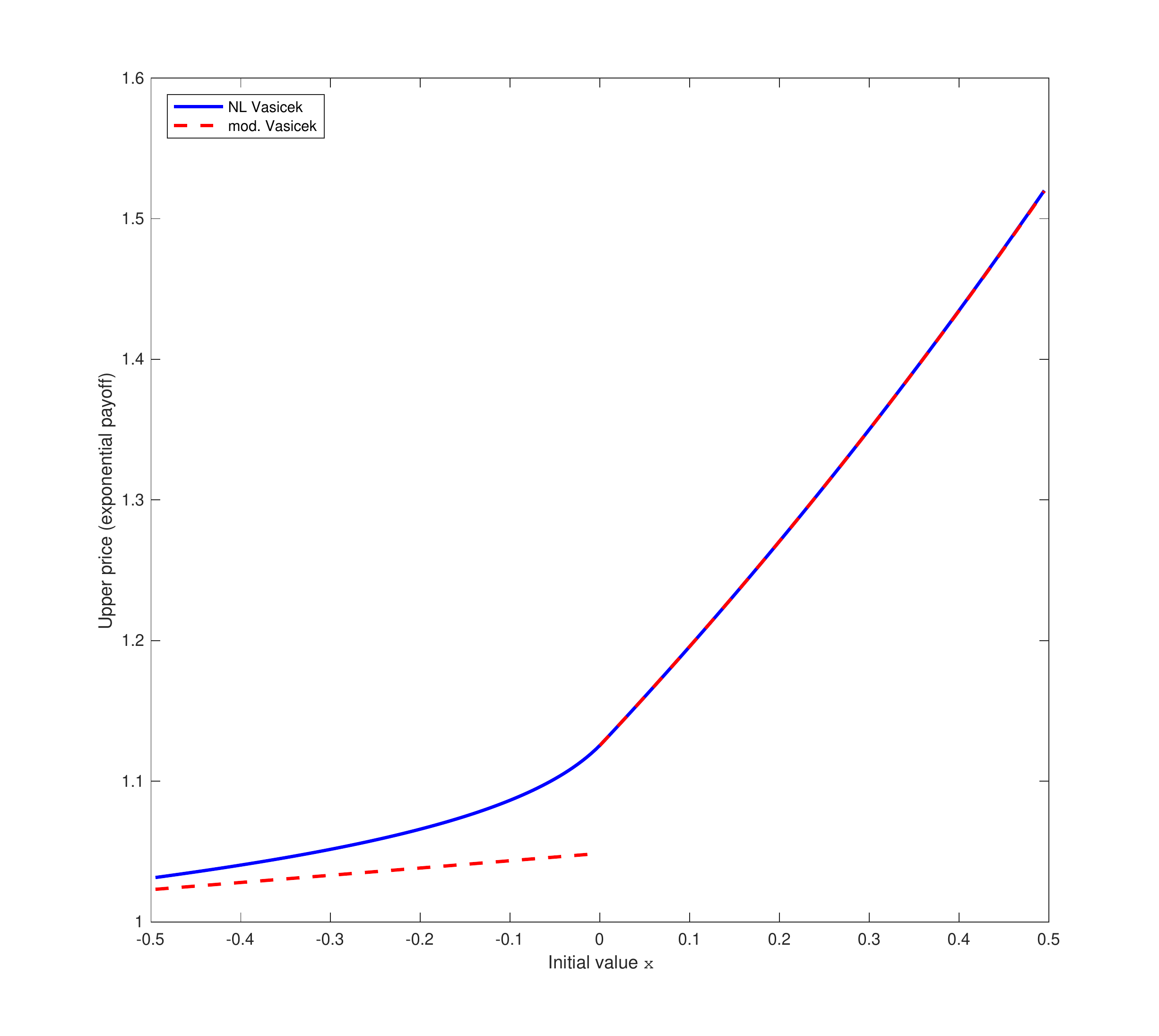} \end{overpic}  \end{minipage}
\end{tabular}
\caption{\label{fig:Vasicekexp} This figure shows the solution of the non-linear Kolmogorov equation for the non-linear Vasi\v cek-model with boundary condition $f(x)=e^x$. The dashed line shows the solutions for the Vasi\v cek model with parameter-set $(\overline b_0, \underline b_1, \overline a_0) $ (on $\R_{<0}$) and $(\overline b_0, \overline b_1, \overline a_0) $ (on $\R_{\ge 0}$) and illustrate the non-linearity in the solution due to the parameter uncertainty. Note that on the positive halfline the curves overlap in the figure.}
\end{figure}

This example combines and encompasses the following two well-known non-linear processes: $g$-Brownian motion and $G$-Brownian motion, see \cite{Peng.97} and \cite{D-Peng}, for example, and \cite{NeufeldNutz2017} for the case with jumps. In the following we will also elaborate on the case where explicit solutions can be obtained, see Proposition \ref{prop:icx} and Remark \ref{remarktoprop:icx}.

\subsection{The non-linear CIR model}\label{sec:CIR}
While the Gaussianity of the Vasi\v cek model immediately implies that the process becomes negative with positive probability, this is inappropriate for various applications, e.g.~in credit risk. There,  the considered affine process models an intensity, which by definition has to be non-negative. Also positive interest rates were a must of a model before the recent crises in 2008-2010. The Cox-Ingersoll-Ross (CIR) model serves as an affine model with state space $\R_{\ge 0}$ and therefore satisfies these needs. It is obtained by choosing $a^0=0$ and $a^1>0$ in the SDE in  \eqref{SDE}. 

The CIR model under parameter uncertainty, which we call the non-linear CIR model, is obtained by considering the state space $\ccO=\R_{>0}$ and assuming that $\underline a^0=\bar a^0=0$, $\underline b^0 \ge \nicefrac{\bar a^1} 2>0$. It is remarkable how far-reaching the positivity of $X$ will be: indeed, a first look at the non-linear Kolmogorov equation already reveals that for increasing and convex functions the supremum in Equation \eqref{eq:def:PDE-generator} will be attained by the upper bounds of the coefficients. 

The Fourier-transform which is  lacking  monotonicity (as $e^{iux}$ does) turns out to loose tractability in the non-linear case considered here. The Laplace transform does not suffer from this problem and we show in the following that in important special cases it can be computed explicitly. However, inversion techniques can no longer be applied in the non-linear setting and the Laplace transform merely serves as a prime example of an increasing convex function for which the non-linear expectations can be computed explicitly in special cases (and for decreasing concave functions of course).

\begin{remark}
In principle, the non-linear CIR model can be extended to the whole space, i.e.~ choosing $\ccO=\R$ is possible, since in Equation \eqref{def:AB}, negative values pose no problem. In the negative halfline the dynamics of the model have no diffusive part and only moves through the drift (which of course can still be stochastic). This could happen with a positive (upper) mean-reversion level and by starting from a negative value.
\end{remark}

We begin by a general result for affine processes which classifies when the classical representation via Riccati equations still holds. 

Recall from  \eqref{eq:affineb*}, that $b^*(x)$ and $a^*(x)$ are actually intervals.  
Define the upper bound of  $a^*(x)$  by
\begin{align}\label{astar}
    \bar a(x) & :=   \bar a^0 + \bar a^1 x^+. 
\end{align}
Note that the upper bound of $b^*(x)$ is given by 
\begin{align}\label{bstar} 
    \bar b(x) & :=  \bar b^{0} + \underline b^1 x \ind{x <0} + \bar b^1 x \ind{x \ge 0}. 
\end{align}
Letting $ \bar B^{1,x} =  \underline b^1  \ind{x <0} + \bar b^1  \ind{x \ge 0}, $ we obtain that 
$\bar b(x) = \bar b^{0} + \bar B^{1,x} x.$ Note that for $x \neq y$ we may no longer have that $\bar b(x) = \bar b^{0} + \bar B^{1,y} x$.

\begin{proposition}\label{prop:icx}
Consider a non-linear affine process $\cA(x,\Theta)$ and assume that for all $P \in \cA(x,\Theta)$
	\begin{align}\label{eq:barAB}
     \beta^P_t \le \bar b^{0} + \bar B^{1,x} X_t,
	\end{align}
	$dP\otimes dt$-almost everywhere for $0 \le t \le T$. Moreover, assume either that $\underline a_1 = \bar a_1 = 0$ or that for all $P\in\cA(x,\Theta)$, $X_t \ge 0$ $P \otimes dt$-a.e. 
  If there exists a $\bar P\in\cA(x,\Theta)$ and a $\bar P$-$\bbF$-Brownian motion $W$ such that the canonical process under $\bar P$ is the unique strong solution of 
	\begin{align} \label{SDEAB}
	dX_t = (\bar b^{0} + \bar B^{1,x} X_t)dt  + \sqrt{  \bar a(X_t)} dW_t, \qquad  X_0=x,\end{align}
	then, for all $u\ge 0$ and $0 \le t \le T$,
	$$ \sup_{P \in \cA(x,\Theta)}E^P\big[e^{uX_t}\big] = \exp\big( \phi(t,u) + \psi(t,u) x \big), $$
	where $\phi,\ \psi$ solve the Riccati equations
 \begin{align} \label{Ric1}
    \partial_t \phi(t,u) &=  \frac{1}{2} \bar a^{0} \psi(t,u)^2 + \bar b^{0} \psi(t,u) , \qquad \phi(0,u)= 0\\
    \partial_t \psi(t,u) &=  \frac{1}{2} \bar a^{1} \psi(t,u)^2 + \bar B^{1,x} \psi(t,u), \qquad \psi(0,u)= u.
    \label{Ric2}
 \end{align}
\end{proposition}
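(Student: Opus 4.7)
The plan is to combine a verification argument with the classical exponential-affine representation for the SDE \eqref{SDEAB}. The crucial observation is that, for $u\ge 0$, the Riccati system \eqref{Ric1}--\eqref{Ric2} preserves non-negativity of $\psi(\cdot,u)$, so the candidate function
\[ F(s,y) := \exp\bigl(\phi(s,u)+\psi(s,u)y\bigr) \]
has non-negative first and second spatial derivatives. This is what makes the dominating parameters $(\bar b^{0},\bar B^{1,x},\bar a^{0},\bar a^{1})$ maximal in the generator at every state actually visited by $X$ under any admissible $P$.

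For the lower bound $\sup_{P}E^P[e^{uX_t}]\ge e^{\phi(t,u)+\psi(t,u)x}$, I would invoke the hypothesis that $\bar P\in\cA(x,\Theta)$ makes $X$ a classical one-dimensional affine process (in the sense of \cite{DuffieFilipovicSchachermayer,Filipovic2009}) with \emph{constant} parameters $(\bar b^{0},\bar B^{1,x},\bar a^{0},\bar a^{1})$. Those parameters are constant because $x$, hence $\bar B^{1,x}$, is fixed, and the positive-part in $\bar a(X_t)$ is irrelevant either because $\bar a^1=0$ or because $X\ge 0$ $\bar P$-a.s.\ (the two alternatives of the hypothesis). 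Standard affine-process theory then delivers $E^{\bar P}[e^{uX_t}]=\exp(\phi(t,u)+\psi(t,u)x)$ with $\phi,\psi$ solving \eqref{Ric1}--\eqref{Ric2}.

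For the reverse inequality I would differentiate $F$ and use \eqref{Ric1}--\eqref{Ric2} to verify the linear PDE
\[ \partial_s F(s,y)=(\bar b^{0}+\bar B^{1,x}y)\,\partial_y F(s,y)+\tfrac{1}{2}(\bar a^{0}+\bar a^{1}y)\,\partial_{yy}F(s,y). \]
Fixing an arbitrary $P\in\cA(x,\Theta)$ and applying It\^o's formula to $s\mapsto F(t-s,X_s)$, the drift reads
\[ \partial_y F\cdot(\beta^P_s-\bar b^{0}-\bar B^{1,x}X_s)+\tfrac{1}{2}\partial_{yy}F\cdot(\alpha_s-\bar a^{0}-\bar a^{1}X_s). \]
The first summand is $\le 0$ by assumption \eqref{eq:barAB} and $\partial_y F\ge 0$; the second is $\le 0$ because either $\underline a^1=\bar a^1=0$, so $\alpha_s\le \bar a^{0}$, or $X_s\ge 0$ $P$-a.s., in which case $\alpha_s\le \bar a^{0}+\bar a^{1}X_s^+=\bar a^{0}+\bar a^{1}X_s$, combined with $\partial_{yy}F\ge 0$. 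Consequently $s\mapsto F(t-s,X_s)$ is a $P$-local supermartingale.

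To finish, I would localize with $\tau_n:=\inf\{s\colon |X_s|\ge n\}$; by path-continuity $\tau_n\nearrow\infty$ $P$-a.s., and on $\llb 0,t\wedge\tau_n\rrb$ the integrand $\partial_y F(t-s,X_s)$ is bounded, so the stopped local-martingale part is a true martingale with vanishing expectation. Taking $P$-expectations gives $E^P[F(0,X_{t\wedge\tau_n})]\le F(t,x)$, and Fatou's lemma applied to $e^{uX_{t\wedge\tau_n}}\to e^{uX_t}$ yields $E^P[e^{uX_t}]\le\exp(\phi(t,u)+\psi(t,u)x)$. Combined with the lower bound from $\bar P$ this gives equality. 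The main obstacle is the unboundedness of $F$, which is dealt with by this localization; the two alternative structural assumptions (either $\bar a^1=0$, or strict positivity of $X$ under every $P\in\cA(x,\Theta)$) are precisely what is needed to guarantee the sign of the second drift contribution in every regime.
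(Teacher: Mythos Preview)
Your proof is correct and takes a genuinely different route from the paper. The paper invokes the abstract semimartingale comparison result of \cite[Theorem~2.2]{BergenthumRueschendorf2007}, which requires verifying the \emph{propagation of order} (PO) property for the class of increasing convex functions; this is done separately in Appendix~\ref{app:PO} via total positivity of the transition densities of the dominating affine process. You instead run a direct verification argument: exploiting that the Riccati flow preserves $\psi(\cdot,u)\ge 0$, you show that the explicit candidate $F(s,y)=\exp(\phi(s,u)+\psi(s,u)y)$ is increasing and convex in $y$, and hence that $s\mapsto F(t-s,X_s)$ is a $P$-local supermartingale for every admissible $P$, followed by localization and Fatou.

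Your approach is more elementary and self-contained, and it makes transparent exactly where the two structural alternatives (either $\bar a^1=0$ or $X\ge 0$) enter. The paper's route, by contrast, is more modular: once PO is established, the comparison applies to \emph{any} increasing convex payoff (as remarked right after the proposition), and the same machinery is reused for the bond-price result in Proposition~\ref{prop:bondprices}. One minor slip in your write-up: after stopping at $\tau_n$ the supermartingale inequality reads $E^P\bigl[F(t-t\wedge\tau_n,X_{t\wedge\tau_n})\bigr]\le F(t,x)$, not $E^P[F(0,X_{t\wedge\tau_n})]$; Fatou should then be applied to $F(t-t\wedge\tau_n,X_{t\wedge\tau_n})\to F(0,X_t)=e^{uX_t}$, which is non-negative, so the conclusion is unaffected.
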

This result is obtained by showing that the supremum in the non-linear expectation is obtained by the maximal semimartingale law $\bar P$ which corresponds to an affine process with parameters $\bar a^{0}, \bar a^1, \bar b^0, \bar B^{1,x}$. Inspection of the proof shows that this property also holds when $e^{ux}$ is replaced by any other increasing and convex function (a Call payoff, for example). An analogous formulation for $u<0$ of course also holds. 
Furthermore, it is interesting to see that the Riccati equations in Equations \eqref{Ric1} and \eqref{Ric2} can be replaced by respective versions thereof. 

Note that the assumption $X_t \ge 0$ $P \otimes dt$-a.e.~is implied by assuming  $x>0$ together with $\underline a^0=\bar a^0=0$ and  $\underline b^0 \ge \nicefrac{\bar a^1} 2>0$ due to Proposition \ref{prop:positivity}.

\begin{proof}
The claim follows  by an application of Theorem 2.2 in \cite{BergenthumRueschendorf2007}. This needs validity of the so-called propagation of order (PO) property and we give a detailed account of this in Appendix \ref{app:PO}. Proposition \ref{prop:PO} in particular yields that the PO property is satisfied for increasing and convex (decreasing and concave) functions when compared to an affine process. 

Let $P\in\cA(x,\Theta)$. By assumption there exists a $\bar P \in \cA(x,\Theta)$ and a $\bar P$-$\bbF$-Brownian motion $W$, such that $X$ is the unique strong solution of the  stochastic differential equation  \eqref{SDEAB} under $\bar P$.
Since $e^{ux}$ for $u \ge 0$ is increasing and convex and \eqref{eq:barAB} holds, Theorem 2.2 in \cite{BergenthumRueschendorf2007} yields that
$$ E^P\big[e^{uX_t}] \le  E^{\bar P}\big[ e^{uX_t}\big]. $$
Since $P\in\cA(x,\Theta)$ was arbitrary and, under $\bar P$, $X$ is an affine process with parameters $ \bar a^{0}, \bar a^1, \bar b^0, \bar B^{1,x}$,  the affine representation follows and the Riccati equations in \eqref{Ric1} can be obtained directly from  Theorem 10.1 in \cite{Filipovic2009}.
\end{proof}

\begin{remark}\label{remarktoprop:icx}
It can be easily checked that the conditions for Proposition \ref{prop:icx} hold in the following two cases:
\begin{enumerate}
\item The non-linear Vasi\v cek model with state space $\ccO=\R$ and $\bar b^1 = \underline b^1$,
\item the non-linear CIR model on the state space $\ccO=\R_{>0}$.
\end{enumerate}
\end{remark}

By the above result we can use the classical Fourier-inversion technique for these affine processes when pricing increasing and convex payoffs (like Call options) or decreasing and concave ones, see Section 10.3 in \cite{Filipovic2009} for examples and details in this direction. In Example \ref{ex:Heston} we will sketch an application in a Heston model with parameter uncertainty in the stochastic volatility.

\section{An It\^o-formula for non-linear affine processes}\label{sec:Ito}

In this section we will construct new processes from non-linear affine processes by simple transformations. The main tool for this will be a suitable formulation of the  It\^o-formula in our setting. 

Consider a twice continuously differentiable function $F \in C^2(\R)$. If we start from a non-linear affine process $\cA(x,\Theta)$ and consider $\tilde X:=F(X)$ then for any $P\in \cA(x,\Theta)$ the process $\tilde X$ is a $P$-semimartingale and we denote its (differential) semimartingale characteristics by $\tilde \alpha$ and $\tilde \beta^P$ (starting from $\alpha$ and $\beta^P$ from Equality \eqref{eq:semimartchar}). In this section we answer the question if the non-linear process $\tilde X$ itself, i.e.~the associated semimartingale laws can be studied independently of $X$. This corresponds one-to-one to the question if there exist an independent formulation of the non-linear process $\tilde X$. The following proposition gives a positive answer to this question.

We define the  interval-valued functions $a^F$ and $b^F$ by 
\begin{align}
a^F(x)& := [ (F^\prime(x))^2 (\underline a^0 + \underline a^1 x^+), (F^\prime(x))^2 (\bar a^0+\bar a^1 x^+) ]
\end{align}
and
\begin{align}\label{eq:bF}
	b^F(x) &:= 
	\Big[ \inf_{(\beta,\alpha) \in b^*(x)\times a^*(x)} \Big(F^\prime(x) \beta + \half F^{\prime\prime}(x) \alpha\Big),
	      \sup_{(\beta,\alpha) \in b^*(x)\times a^*(x)} \Big(F^\prime(x) \beta + \half F^{\prime\prime}(x) \alpha\Big) \Big].
\end{align}
The non-linear process $\tilde X$ inherits certain bounds from $X$ which is characterized in the following proposition.

\begin{proposition}
	\label{Ito}
	Let $\mathcal{A}(x,\Theta)$ be a non-linear affine process and $F\in C^2$. Then, for every $P\in \cA(x,\Theta)$, $\tilde X=F(X)$ is a $P$-semimartingale with differential characteristics $\tilde \alpha$ and $\tilde \beta^P$ satisfying
	\begin{align}
	   \tilde \alpha_s  & \in a^F(X_s), \\
	   \tilde \beta^P_s & \in b^F(X_s).
	\end{align}
\end{proposition}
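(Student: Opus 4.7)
The plan is simply to apply the classical It\^o formula to identify the characteristics of $\tilde X = F(X)$ explicitly in terms of $(\beta^P,\alpha)$, and then read off the interval containments directly from the definitions of $a^F$ and $b^F$.

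First, fix $P\in\cA(x,\Theta)$ and write $X = x + B^P + M^P$ with $B^P = \int_0^\cdot \beta^P_s\,ds$ and $\langle M^P\rangle = \int_0^\cdot \alpha_s\,ds$. Since $F\in C^2$, applying the classical It\^o formula gives
\begin{equation*}
  F(X_t) \;=\; F(x) + \int_0^t F'(X_s)\,dM^P_s + \int_0^t \Big(F'(X_s)\beta^P_s + \tfrac{1}{2}F''(X_s)\alpha_s\Big)\,ds.
\end{equation*}
The first integral defines a continuous $P$-$\bbF$-local martingale $\tilde M^P$ (because $F'$ is continuous, hence locally bounded along the continuous path $X$), and the second integral is a process of locally finite variation, absolutely continuous in $t$. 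Hence $\tilde X$ is a $P$-$\bbF$-semimartingale in the sense of Section~\ref{sec:setup}, with absolutely continuous characteristics given by
\begin{equation*}
  \tilde \beta^P_s \;=\; F'(X_s)\beta^P_s + \tfrac{1}{2}F''(X_s)\alpha_s, \qquad
  \tilde \alpha_s \;=\; (F'(X_s))^2\,\alpha_s,
\end{equation*}
where the formula for $\tilde\alpha$ follows from the stochastic-integral identity $\langle \tilde M^P\rangle_t = \int_0^t (F'(X_s))^2\,d\langle M^P\rangle_s$.

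Second, I verify the two interval containments $dP\otimes dt$-a.e.\ using that $P$ is affine-dominated by $\Theta$, i.e.\ $\alpha_s \in a^*(X_s)$ and $\beta^P_s\in b^*(X_s)$. For $\tilde\alpha$: the multiplier $(F'(X_s))^2\geq 0$ preserves the ordering of the endpoints of the interval $a^*(X_s) = [\underline a^0+\underline a^1 X_s^+,\bar a^0+\bar a^1 X_s^+]$, so $\tilde\alpha_s \in (F'(X_s))^2 \cdot a^*(X_s) = a^F(X_s)$. For $\tilde\beta^P$: the pair $(\beta^P_s,\alpha_s)$ lies in the product set $b^*(X_s)\times a^*(X_s)$, and the linear functional $(\beta,\alpha)\mapsto F'(X_s)\beta + \tfrac{1}{2}F''(X_s)\alpha$ evaluated at this point is therefore bounded below by its infimum and above by its supremum over $b^*(X_s)\times a^*(X_s)$; this is exactly the interval $b^F(X_s)$ defined in~\eqref{eq:bF}.

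There is essentially no hard step here; the proof is a clean application of It\^o's formula followed by a definitional check. The only technical point worth a sentence is to ensure that both $\tilde\beta^P$ and $\tilde\alpha$ are genuine densities (so that $\tilde X$ has a.c.\ characteristics), which is immediate from the integrability of $\beta^P$ and $\alpha$ together with the local boundedness of $F'\circ X$ and $F''\circ X$ along continuous paths.
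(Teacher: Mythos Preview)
Your proof is correct and follows essentially the same approach as the paper: apply the classical It\^o formula to $F(X)$ under a fixed $P\in\cA(x,\Theta)$, read off the differential characteristics $\tilde\beta^P_s = F'(X_s)\beta^P_s + \tfrac{1}{2}F''(X_s)\alpha_s$ and $\tilde\alpha_s = (F'(X_s))^2\alpha_s$, and then check the interval containments directly from the definitions of $a^F$ and $b^F$. Your write-up is slightly more detailed in justifying why the stochastic integral is a local martingale and why the containments follow, but the argument is the same.
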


\begin{proof}
	Let $t\in [0, T]$. By definition, $P \in \mathcal{A}(x,\Theta)$	 implies that  
	\begin{equation*}
		X_t= X_0 + \int_{0}^{t}\beta_s^P ds +  M_t^P
	\end{equation*}
	where $\beta_s^P \in b^*(X_s)$, $\alpha_s \in a^*(X_s)$ and $M^P$ is the continuous local martingale part in the $P$-semimartingale decomposition of $X$. As previously, we denote $\langle M^P \rangle=\int_0^\cdot \alpha_s ds$.
	Since $F\in C^2(\R)$, the It\^o formula yields that
	\begin{equation*}
		F(X_t)= F(X_0) + \int_0^t \big(F^\prime(X_s) \beta_s^P + \frac{1}{2} F^{\prime \prime}(X_s)\alpha_s\big) ds + \int_0^t  F^\prime(X_s) dM^P_s
	\end{equation*}	
	and, hence, 
	\begin{align}
		\tilde{\beta}^{{P}}_s &= F^\prime(X_s) \beta^P_s + \frac{1}{2}F^{\prime \prime}(X_s)\alpha_s  \label{temp933}\\
		\tilde{\alpha}_s &=  (F^\prime(X_s))^2 \alpha_s. \notag
	\end{align}
	Now the properties $\tilde \alpha_s   \in a^F(X_s)$ and $\tilde \beta^P_s  \in b^F(X_s)$ can be checked directly. 
\end{proof}

\begin{remark}
Intuitively, the above result allows to construct the non-linear process $\tilde X=F(X)$ when $X$ is non-linear affine. The new bounds for the (differential) semimartingale characteristics are given by $b^F(X)$ and $a^F(X)$, respectively. However, the drift and volatility of $\tilde X$ now relate to each other, which often gives a substantially smaller class in comparison to all semimartingale laws whose drift and volatility stay in $b^F(X_t)$ and $a^F(X_t)$. 
\end{remark}

In general, (non-linear) affine processes are stable under affine transformation. 
The following example shows that, we may even consider the non-linear transformation $F(x)=x^2$, at least in some special cases.

\begin{example}
	Let $\cA(x,\Theta)$ be a non-linear Vasi\v cek model satisfying $\bar b^0=\underline b^0 =0$, and $\tilde X = F(X)=X^2$. 
	We apply Proposition \ref{Ito}: first, note that since $F^{\prime \prime}=2>0$,
	\begin{equation*}
		{b}^F(x) = 
			 [2x^2\underline{b}^1+ \underline{a}^0, 2x^2\bar{b}^1+ \bar{a}^0], 
	\end{equation*}	
	and  ${a}^F(x)=  [4 x^2 \underline{a}^0, 4x^2 \bar{a}^0 ]$.   
  Then, $b^F$ and $a^F$ can even be written as functions of $\tilde X=X^2$. This would not be the case if $\bar b^0=\underline b^0 =0$ does not hold, since $b^F$ would depend on $x$ (which is not a function of $x^2$). 
  Under this observation we may directly study the semimartingale characteristics of $\tilde X$. Replacing $x^2$ by $\tilde x$ in $b^F$ and $a^F$ we indeed observe an affine structure and it is tempting to conjecture that we obtained a non-linear CIR model. In general, this is not the case: for simplicity, choose $\underline a^0 = \underline b^1 = 0$ and $\bar a^0=\bar b^1 = 1$ and $x=1$. Then $b^F(1)=[0,3]$ and $a^F(1)=[0,4]$. For a non-linear CIR model, any choices of $(\tilde \beta, \tilde \alpha)$ in $b^F \times a^F$ should be possible. Now choose, say, $\tilde \alpha=4$ (corresponding to a maximar volatility of $\alpha = 1$ in the original model). Then not all choices of $\tilde \beta \in [0,3]$ are reached by the original model: indeed, one immediately obtains from \eqref{temp933} that $\tilde \beta$ needs to lie in $[1,3]$. 

  In the choice where only one parameter (either $\alpha$ or $\beta$) carries uncertainty, this problem of course vanishes. This is the case for the existing transformations of $g$- and $G-$Brownian motion in the literature and we provide further examples in this direction below.
\end{example}	

The above example also illustrates, that non-linear transformations of processes under ambiguity should be handled with care. 
The following example shows how to obtain a geometric kind of dynamics, which allows us to obtain the \emph{non-linear Black-Scholes} model as considered in \cite[Example 3]{EpsteinJi2013} and \cite{Vorbrink14}.
Both works consider the case where there is only volatility uncertainty.

\begin{example}
	Let $\cA(x,\Theta)$ be a non-linear affine process and consider $F(X)= e^X$. 
  Again, we apply Proposition \ref{Ito}. 
  First, note that with $\tilde x = e^x$,
	$$ a^F(x) = (e^x)^2 a^*(x). $$
	Moreover, since $a^*(x) = [\underline a^{0}+\underline a^{1} x^+, \bar a^{0}+  \bar a^{1} x^+]$, we obtain
	\begin{align}
	   a^F(x) = (\tilde x)^2[\underline a^{0}+\underline a^{1} (\log \tilde x)^+, \bar a^{0}+  \bar a^{1} (\log \tilde x)^+] = \tilde a(\tilde x),
	\end{align}
	and we already computed $\tilde a$. In a similar manner, one obtains $\tilde b$ from \eqref{eq:bF} noting that
	\begin{align*}
	{b}^F(x) &= 
	\begin{cases}
     [e^x( \underline b^0 + \underline b^1 x)+ \frac{1}{2}e^x( \underline a^0 + \underline a^1 x^+), 
	 e^x ( \bar b^0 + \bar b^1 x)+ \frac{1}{2}e^x ( \bar a^0 + \bar a^1 x^+)], & x  \ge 0 \\
	 [e^x( \underline b^0 + \bar b^1 x)+ \frac{1}{2}e^x \underline a^0 , 
	 e^x ( \bar b^0 + \underline b^1 x)+ \frac{1}{2}e^x  \bar a^0 ],& x < 0.
	\end{cases} 
	\end{align*}
     The state space of $e^X$ is of course $\R_{>0}$.
\end{example}

\begin{example}[The non-linear Black-Scholes model]
Allowing for \emph{drift} and \emph{volatility} uncertainty in the log-price of a stock, one arrives
at a non-linear Black-Scholes model. We consider a Brownian motion with drift and volatility uncertainty, which is in our language a non-linear Vasi\v cek model with $\underline b^1=\bar b^1 = 0$. Furthermore, we assume that 
the stock price is given by  $S = \exp (X)$, i.e.~$F(x)=e^x$. Then, the calculations from the previous example 
immediately yield that the stock price is given by the non-linear process $\tilde X$ where 
$a^F(x)=\tilde a(F(x))$ and $b^F(x)=\tilde b(F(x))$ with 
$$ \tilde a(x) = x^2 [ \underline a^0, \bar a^0] $$
and 
$$ \tilde b(x) =  [ x \underline b^0 + \tfrac 1 2 x \underline a^0, x \bar b^0 + \tfrac 1 2 x \bar a^0]. $$
Option pricing for monotone convex (concave) pay-offs can immediately be done by Proposition \ref{prop:icx}, see  Example 3 in \cite{EpsteinJi2013} for explicit formulae for call options (with no uncertainty of the drift). The article \cite{Vorbrink14} excludes drift uncertainty by arguing that under risk-neutral pricing the drift is known. 
\end{example}

\begin{example}[The Heston model with uncertainty in the volatility parameters]\label{ex:Heston}~\\
The model put forward in \cite{Heston} is one of the most popular models for stochastic volatility, which also is heavily used in foreign exchange markets. Model and calibration risk is an important issue, see for example \cite{GuillaumeSchoutens2012} in this regard.  
Here we give a short outline how a non-linear version could be constructed, allowing for parameter uncertainty in volatility only (and not in the drift of the stock price or in the correlation of volatility and stock price). In this regard, we extend $\Omega$ in the classical way to construct an additional (independent) Brownian motion $\tilde V$ which allows us to construct two correlated Brownian motions $V$ and $W$. The correlation is fixed and denoted by $\rho$. Each $P\in \fP(\Omega)$ is extended by leaving $\tilde V$ untouched, such that $(V,W)$ will be a two-dimensional Brownian motion where $V$ and $W$ have correlation $\rho$ and we denote this new semimartingale law again by $P$.

Consider a non-linear CIR process $\cA(x,\Theta)$ with state space $\ccO=\R_{>0}$ as introduced in Section \ref{sec:CIR}. The stock price $S$ is given by the strong solution of the SDE
$$
	dS_t = S_t   X_t dV_t , \qquad 0 \le t \le T
$$ where $X$ is the canonical process on $\Omega$ (and hence a non-linear process). Hence the volatility $X$ stems from a non-linear CIR model which means intuitively, that we have a CIR model with parameter uncertainty with upper and lower bounds $\bar b^0, \bar b^1, \bar a^1$ and $\underline b^0, \underline b^1, \underline a^1$, respectively. For simplicity we chose a vanishing risk-free rate of interest.  

We show how to compute a call-price in this non-linear Heston model in the following. The call price $C(T,K)$ for maturity $T\le T^*$ and strike $K>0$ is given by 
the supremum of the expectations $E^P[(S_T-K)^+]$ over all (extended) semimartingale laws $P$ from $\cA(x,\Theta)$. 
Since the pay-off function $(s-K)^+$ is increasing and convex, the arguments of Proposition \ref{prop:icx} apply and 
$$ C(T,K) = E^{\bar P}[(S_T-K^+)], $$
where $\bar P$ is the worst-case semimartingale law which achieves the supremum. Again from the proof of Proposition \ref{prop:icx} we find that under $\bar P$, $X$ is a (classical) CIR-process with parameters $\bar b^0, \bar b^1, \bar a^1$. The call price formula can be found in \cite{Heston}, see also Section 10.3.3.~in \cite{Filipovic2009} for a derivation using Fourier inversion techniques. 
\end{example}

\section{Affine term structure models}\label{sec.:ATSM}
One of the most important application of affine models is in term structure models. In this regard, we provide in the following a term-structure equation for non-linear affine models implying prices for derivatives or bond-prices. 

Consider a payoff $f(X_T)$ taking place at time $T>0$. In the classical setting, arbitrage-free prices are given by expectations of the discounted pay-off under a risk-neutral measure. 
According to the superhedging duality in \cite{Biaginie.a.2017} (Theorem 5.1), in the case we consider here -- when there is a family of such measures  --  upper bounds of these price processes (and hence the smallest superhedging price) given $X_t=x$ are given by
\begin{align}\label{eq:FTt}
F(T-t,x) = \sup_{P \in \cA(t,x,\Theta)}E^P\Big[ e^{-\int_t^T X_s ds} f(X_T)| X_t=x\Big], \quad 0 \le t \le T.
\end{align}
The following result states the non-linear term-structure equation for the pay-off $f(X_T)$.

\begin{proposition}
Assume that $f$ is Lipschitz-continuous. Then $F(t,x)$ is a viscosity solution of
\begin{align}\label{eq:F}
\partial_t F(t,x) - \sup_{\theta \in \theta } \ccL^\theta F(t,x) + x F(t,x) &=0,
\end{align}
with boundary condition $F(0,x)=f(x)$. If in addition,
\begin{enumerate}[(i)]
\item $\underline a^0 >0$, $\ccO=\R$, and $f$ is bounded, then $F(t,x)$ is the unique solution of \eqref{eq:F}, or
\item if $\underline a^0=\bar a^0=0$, $\underline b^0 \ge \nicefrac{\bar a^1} 2>0$ and $\ccO=\R_{>0}$, then
$F(t,x)$ is the only viscosity solution, such that
	\begin{align}
	\sup_{(t,x) \in [0,T]\times \R_{>0}} \frac{|F(t,x)|}{1+x} < \infty.
	\end{align}
\end{enumerate}
\end{proposition}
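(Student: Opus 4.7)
The plan is to mimic the programme of Section~\ref{sec:DPP-PDE} and Theorem~\ref{thm:PDE}, with the added discount factor $e^{-\int_t^{\cdot} X_s ds}$ producing the extra zero-order term $xF$. It is convenient to reverse time and define $w(s,x):=F(T-s,x)$ on $[0,T]\times\ccO$, so that $w$ solves the backward equation $-\partial_s w-\sup_\theta \ccL^\theta w+xw=0$ with terminal condition $w(T,x)=f(x)$ and admits the representation $w(s,x)=\sup_{P\in\cA(s,x,\Theta)}E^P[e^{-\int_s^T X_u du}f(X_T)]$. The first step is to establish a DPP for $w$: combining the conditioning/pasting result of Lemma~\ref{le:DPP-help} with the tower property applied to the multiplicative functional $e^{-\int_s^\tau X_u du}$ yields, for every $\bbF$-stopping time $\tau\in[s,T]$, the identity $w(s,x)=\sup_{P\in\cA(s,x,\Theta)}E^P[e^{-\int_s^\tau X_u du} w(\tau,X_\tau)]$. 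This is purely a rerun of the argument used for Proposition~\ref{prop:DPP}, since multiplicativity of the exponential is what makes the argument factor through conditioning.

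Next I would show joint continuity of $w$. In case~(ii) the bound $e^{-\int_s^T X_u du}\le 1$ follows from Proposition~\ref{prop:positivity}, so the Lipschitz bound on $f$ combined with Lemma~\ref{le:flow-estimate} (and a Gronwall argument giving $\sup_{P\in\cA(s,x,\Theta)}E^P[X_T]\le C(1+x)$ from the affine-dominated drift) yields both the linear growth in $x$ needed for the uniqueness class and local $\tfrac12$-H\"older continuity in $s$, exactly as in Lemma~\ref{le:value-funct-cont}. In case~(i) the discount factor is no longer bounded, but boundedness of $f$ and the bound $|\beta^P|\le\mathcal{K}(1+|X|)$ together with BDG give an exponential moment $\sup_{P\in\cA(s,x,\Theta)}E^P[\exp(c\int_s^T |X_u|du)]<\infty$ locally uniformly in $x$ via a standard Novikov/Gronwall estimate; Cauchy--Schwarz then reduces continuity of $w$ to the flow estimate of Lemma~\ref{le:flow-estimate}.

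For the viscosity property I would mirror the proof of Theorem~\ref{thm:PDE}. Let $\varphi\in C^{2,3}_b$ satisfy $\varphi\ge w$ with equality at $(s_0,x_0)$ and apply It\^o's formula to the product $D^{s_0}_s\varphi(s,X_s)$ with $D^{s_0}_s:=e^{-\int_{s_0}^s X_u du}$. The extra term $-X_s D^{s_0}_s\varphi(s,X_s)$ appears and, at the Taylor-expansion level at $(s_0,x_0)$, contributes exactly $-x_0\varphi(s_0,x_0)$ to the generator. The local martingale part is a true martingale on small intervals by Remark~\ref{rem:martingale}, and all error terms are controlled by Lemma~\ref{le:flow-estimate} together with $D^{s_0}_s=1+O(u)$ on small horizons. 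Taking expectations, dividing by $u>0$, letting $u\downarrow 0$ and then taking the supremum over $P\in\cA(s_0,x_0,\Theta)$ produces $-\partial_s\varphi(s_0,x_0)-\sup_\theta \ccL^\theta\varphi(s_0,x_0)+x_0\varphi(s_0,x_0)\le 0$, which is the subsolution inequality; the supersolution inequality is symmetric.

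Finally, translating back to $F(t,x)=w(T-t,x)$ and quoting uniqueness: in case~(i), $\underline a^0>0$ makes the diffusion coefficient Lipschitz on $\R$, and the zero-order term $xF$ is Lipschitz in $(x,F)$, so the classical comparison theorem from \cite{fleming-soner-06} (already used in Proposition~\ref{thm:PDE-uniquenessI}) extends verbatim; boundedness of $f$ together with the growth control from the continuity step confines $F$ to the class in which comparison holds. In case~(ii) we are on $\R_{>0}$ with the Feller-type condition $\underline b^0\ge \bar a^1/2$, and the additional term $xF$ has the correct sign on $\ccO=\R_{>0}$, so it is absorbed into the proper operator of \cite{Amadori2007}; invoking Theorem~\ref{thm:Amadori} in its form recalled in Appendix~\ref{app:COMP} gives uniqueness in the growth class $|F(t,x)|/(1+x)$ bounded. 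The main obstacle throughout is case~(i): one must check the exponential-moment estimate for the discount factor and verify that $F$ stays in a class where comparison applies, neither of which was needed in Theorem~\ref{thm:PDE}.
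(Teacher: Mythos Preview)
Your proposal is correct and follows exactly the approach the paper itself sketches: rerun the dynamic programming argument of Proposition~\ref{prop:DPP} with the multiplicative discount factor, then repeat the It\^o-based viscosity argument of Theorem~\ref{thm:PDE} (your product-rule application to $D^{s_0}_s\varphi$ is precisely how the extra $xF$ term appears), and finally invoke Propositions~\ref{thm:PDE-uniquenessI} and~\ref{thm:PDE-uniquenessII} for uniqueness. The paper also notes, as an alternative, that one can enlarge the state space to convert the Lagrange-form problem into the Mayer form already treated in Theorem~\ref{thm:PDE} (cf.\ \cite[Remark~3.10]{BouchardTouzi2011}); this bypasses your explicit product-rule computation and the exponential-moment issue in case~(i), but your direct route is equally valid and in fact more detailed than what the paper provides.
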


For a proof of this result one can argue the same way as in the proof of Theorem \ref{thm:PDE}. More precisely, dynamic programming yields for any stopping time $\tau$ taking values in $[t,T]$ that
$$ F(T-t,x) = \sup_{P \in \cA(t,x,\Theta)}E^P\bigg[ e^{-\int_t^\tau X_s ds} F(T-\tau,X_\tau)\bigg]. $$
Then, following the arguments in Theorem \ref{thm:PDE} leads to the desired result.
Alternatively, one could also enlarge the state space to transform this control problem which is in Lagrange form to one of the Mayer form like in Proposition \ref{prop:DPP} and Theorem \ref{thm:PDE} (see for example Remark 3.10 in \cite{BouchardTouzi2011}).

The term-structure equation now allows to obtain the bond prices %
by considering the pay-off $f(X_T)=1$.  We illustrate how an extension of the state space can be used to achieve a result similar to Proposition \ref{prop:icx} leading to closed-form bond prices in special cases.

In our approach,  upper  bond prices under the non-linear affine term structure model $\cA(t,x,\Theta)$, $x \in \ccO$, are given by
\begin{align}
  \bar p(t,T,x) = \sup_{P \in \cA(t,x,\Theta)}E^P\Big[ e^{-\int_t^T X_s ds} | X_t=x\Big], \quad 0 \le t \le T,
\end{align}
conditional on $X_t=x$. For arbitrary $\omega \in \Omega$ one obtains the bond price as $\bar p(t,T)(\omega) = \bar p(t,T,\omega_t)$.
 The respective lower bond price $\underline p(t,T,x)$ is obtained by replacing the supremum with an infimum. The following proposition shows that in important special cases these prices can be obtained in closed form.
 Again, for  $x \in \ccO$ recall from \eqref{astar} and \eqref{bstar}
 the upper bound of  $a^*(x)$ and $b^*(x)$ defined by
 \begin{align*}%
 \bar a(x)  :=   \bar a^0 + \bar a^1 x^+, 
 \quad \quad
  \bar b(x)  :=  \bar b^{0} + \underline b^1 x \ind{x <0} + \bar b^1 x \ind{x \ge 0}, 
 \end{align*}
and recall $ \bar B^{1,x} =  \underline b^1  \ind{x <0} + \bar b^1  \ind{x \ge 0}$,  so we obtain that 
 $\bar b(x) = \bar b^{0} + \bar B^{1,x} x$.
 Moreover, recall from \eqref{SDEAB} the affine process with  coefficients $\bar a^{0}, \bar a^{1}, \bar b^{0}, \bar B^{1,x}$. 
\begin{proposition}\label{prop:bondprices}
Consider a non-linear affine process $\cA(x,\Theta)$, assume  for all $P \in \cA(x,\Theta)$ that
	\begin{align}
	   \beta^P_t \le \bar{b}^0 + \bar B^{1,x} X_t
	\end{align}
	$dP\otimes dt$-almost everywhere for $0 \le t \le T$ 
	and assume that either $\underline a^1 = \bar a^1 = 0$ or that for all $P\in\cA(x,\Theta)$, $X_t \ge 0$ $P \otimes dt$-a.s.
	If there exists  $\bar P\in\cA(x,\Theta)$ and  a $\bar P$-$\bbF$-Brownian motion $W$ such that the canonical process under $\bar P$ is the unique strong solution of \eqref{SDEAB}, then, for all $u\ge 0$ and $0 \le t \le T$,
	$$ \bar p(t,T,x) = \exp\big( \phi(t,u) + \psi(t,u) x \big), $$
	where $\phi,\ \psi$ solve the Riccati equations
 \begin{align} \label{Ric1bond}
    \partial_t \phi(t,u) &=  \frac{1}{2} \bar a^0 \psi(t,u)^2 + \bar  b^0 \psi(t,u) ,  \phantom{-1} \ \,\qquad \phi(0,u)= 0,\\
    \partial_t \psi(t,u) &=  \frac{1}{2} \bar a^1 \psi(t,u)^2 + \bar  B^{1,x} \psi(t,u)-1, \qquad \psi(0,u)= u.
    \label{Ric2bond}
 \end{align}
\end{proposition}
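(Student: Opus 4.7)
The plan is to mirror the proof of Proposition~\ref{prop:icx}, this time for the path-functional pay-off $e^{-\int_t^T X_s\,ds}$ rather than the terminal pay-off $e^{u X_T}$. The key step is to show that the supremum defining $\bar p(t,T,x)$ is attained at the law $\bar P$ of the unique strong solution of \eqref{SDEAB}. Once this is established, the exponential-affine representation together with the Riccati system \eqref{Ric1bond}--\eqref{Ric2bond} follows from the classical affine term-structure theory (e.g.\ Theorem~10.2 in \cite{Filipovic2009}) applied to this specific affine process; the source term $-1$ in the $\psi$-equation is the standard contribution of the discount factor $e^{-\int X_s\,ds}$.

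To deal with the path dependence in the comparison step, I propose to enlarge the canonical space so that the running integral $Y_t := \int_0^t X_s\,ds$ becomes an additional coordinate. Under every $P \in \cA(t,x,\Theta)$, the pair $(X,Y)$ is then a two-dimensional continuous semimartingale whose second component has differential characteristics $(X_t,0)$ shared across all laws. The bond pricing problem becomes a terminal-value problem with pay-off $g(x,y) = e^{-y}$, which is decreasing and convex in $y$ and independent of $x$. A two-dimensional version of Theorem~2.2 in \cite{BergenthumRueschendorf2007}, combined with the hypothesis $\beta^P_t \le \bar b^0 + \bar B^{1,x} X_t$ and the affine-domination $\alpha_t \le \bar a(X_t)$, then yields $E^P[e^{-\int_t^T X_s\,ds}] \le E^{\bar P}[e^{-\int_t^T X_s\,ds}]$ for every $P \in \cA(t,x,\Theta)$. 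Under the CIR-type hypothesis $\underline a^0 = \bar a^0 = 0$, $\underline b^0 \ge \bar a^1/2 > 0$, Proposition~\ref{prop:positivity} guarantees $X_t > 0$ $P$-a.s., so that $\bar B^{1,x}$ reduces to the constant $\bar b^1$ along every path; in the Vasi\v cek sub-case $\underline a^1 = \bar a^1 = 0$ the sign of $X$ does not enter the diffusive part, so the comparison is unaffected.

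The main obstacle is the two-dimensional state-space extension and the verification that the propagation-of-order framework of \cite{BergenthumRueschendorf2007} applies cleanly to the decreasing convex pay-off $g(x,y) = e^{-y}$ on the enlarged space. A cleaner alternative is a direct PDE verification: by the term-structure equation in the preceding proposition, $\bar p$ is a viscosity solution of \eqref{eq:F}. Plugging the ansatz $\exp(\phi(T-t) + \psi(T-t)\,x)$ with initial condition $\phi(0) = \psi(0) = 0$ into \eqref{eq:F}, one finds that the PDE reduces to the Riccati system \eqref{Ric1bond}--\eqref{Ric2bond} precisely when the pointwise supremum in the generator is attained at $\theta = (\bar b^0, \bar B^{1,x}, \bar a^0, \bar a^1)$, which has to be checked using the sign of $\psi$ along the Riccati flow together with $x \ge 0$ (in the CIR case) or $a^1 = 0$ (in the Vasi\v cek case). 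Uniqueness of the viscosity solution, as supplied by Propositions~\ref{thm:PDE-uniquenessI} and~\ref{thm:PDE-uniquenessII} for the respective sub-cases, then identifies the ansatz with $\bar p$ and concludes the argument.
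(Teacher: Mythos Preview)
Your first route---enlarging the state space to include the running integral and applying the two-dimensional semimartingale comparison of \cite{BergenthumRueschendorf2007}---is precisely what the paper does. The only difference is a sign convention: the paper sets $Y^1 = -\int_0^\cdot X_s\,ds$ and $Y^2 = X$, so that the bond pay-off becomes $(y^1,y^2)\mapsto e^{y^1}$, which is \emph{increasing} and convex in both coordinates. This places the comparison squarely inside the increasing-convex ordering framework, avoiding the mixed-monotonicity issue you would otherwise have to address with $g(x,y)=e^{-y}$. With this sign choice, the componentwise drift inequality $\beta_t \le \bar\beta_t$ and the positive-semidefinite ordering $\alpha_t \le_{\mathrm{psd}} \bar\alpha_t$ follow directly from the hypotheses (the first coordinate contributes nothing to the diffusion matrix and its drift is identical under every $P$), and Theorem~2.2 of \cite{BergenthumRueschendorf2007} gives $E^P[e^{-\int_0^t X_s\,ds}] \le E^{\bar P}[e^{-\int_0^t X_s\,ds}]$. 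The paper then reads off the Riccati system from Theorem~10.4 in \cite{Filipovic2009}. You correctly flag that the two-dimensional PO property needs to be checked; the paper defers this to Appendix~\ref{app:PO}, though the appendix as written treats only the one-dimensional case explicitly.

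Your second route via the term-structure PDE and viscosity uniqueness is a genuine alternative, but it does not close under the hypotheses of the proposition as stated. The uniqueness results you cite (Propositions~\ref{thm:PDE-uniquenessI} and~\ref{thm:PDE-uniquenessII}, and their term-structure analogues) require either $\underline a^0 > 0$ on $\ccO=\R$, or the full CIR package $\underline a^0=\bar a^0=0$, $\underline b^0 \ge \bar a^1/2 > 0$ on $\ccO=\R_{>0}$. Proposition~\ref{prop:bondprices} assumes neither; it only asks that $\underline a^1=\bar a^1=0$ \emph{or} that $X_t\ge 0$ under every $P$, together with the drift bound and the existence of $\bar P$. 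So your PDE argument would prove a weaker statement, covering only the two sub-cases where uniqueness is available, rather than the proposition in full generality. The comparison route sidesteps this because it never invokes PDE uniqueness.
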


\begin{proof}
This results also follows by using semimartingale comparison. In this regard let $P \in \cA(x,\Theta)$ and consider the two-dimensional process $Y=(Y^1,Y^2)$ where $Y^1 = -\int_0^\cdot X_s ds$ and $Y^2 = X$. Then, there is no parameter uncertainty with respect to the dynamics of $Y^1$ since its differential semimartingale characteristics are obtained from $dY^1_t =- X_t dt$. 

 By assumption there exists a $\bar P\in \cA(x,\Theta)$ and a $\bar P$-$\bbF$-Brownian motion $W$, such that $X$ is the unique strong solution of the  stochastic differential equation  \eqref{SDEAB}.
 Denote by $\beta, \alpha$ the differential semimartingale characteristics under $P$ of the two-dimensional semimartingale $Y$ and by $\bar \beta, \bar \alpha$ those of $Y$ under $\bar P$. It is easily verified that $\beta_t \le \bar \beta_t$ and also that $\alpha_t \le_{\text{psd}} \bar \alpha_t$ in the positive semidefinite order\footnote{ Here, $A \le_{\text{psd}} B$ means $x^\top (B-A) x \ge 0$ for all $x \in \R^2$.}.

Since $(y^1,y^2) \mapsto e^{u^1 y^1}$ for $u \ge 0$ is increasing and convex, Theorem 2.2 in \cite{BergenthumRueschendorf2007}  (the propagation of order (PO) property is shown in Appendix \ref{app:PO}) yields that,
$$ E^P\big[e^{-\int_0^t X_s ds}] \le  E^{\bar P}\big[ e^{-\int_0^t X_s ds}\big]. $$
Since, $P\in\cA(x,\Theta)$ was chosen arbitrarily and under $\bar P$, $X$ is an affine process with parameters 
$\bar a^{0}, \bar a^{1}, \bar b^{0}, \bar B^{1,x}$, the affine representation follows and the Riccati equations in \eqref{Ric1bond}-\eqref{Ric2bond} can be obtained directly from  Theorem 10.4 in \cite{Filipovic2009}.
\end{proof}

\begin{remark}Again, as in Remark \ref{remarktoprop:icx}, it can be easily checked that the conditions for Proposition \ref{prop:bondprices} hold in the following two cases:
\begin{enumerate}
\item The non-linear Vasi\v cek model with state space $\ccO=\R$ and $\bar b^1 = \underline b^1$,
\item the non-linear CIR model on the state space $\ccO=\R_{>0}$.
\end{enumerate}
It is remarkable, that in the general non-linear Vasi\v cek model with parameter uncertainty on the speed of mean reversion, the classical exponential affine bond pricing formula ceases to hold. Thus, in this model, the interval of parameters can not directly be backed out from a interpolation of bid and ask prices with a standard Vasi\v cek model. This however is the case in the CIR model and in the Vasi\v cek model where $b^1$ is known.
\end{remark}

The previous results and examples directly allow the treatment of term-structure models based on the non-linear Vasi\v cek model and on the non-linear CIR model (see Sections \ref{non-linearVasicek} and \ref{sec:CIR}). 
An important difficulty for the modeller in practical situations is that she has to make her choice between these  models with strong implications: for example, the state space can allow for negative values or can strictly exclude them, a well-documented difficulty of the affine models after the European crisis, see \cite{Carver2012}. The following example shows that in a non-linear setting one is able to mix these two type of models and the modeller no longer has to decide a priori if she allows or excludes negative values.

\subsection{The non-linear Vasi\v cek-CIR model} \label{sec:non-linearVasicekCIR}

If one is not able to restrict the state space a priori, one can consider the following non-linear affine model: assume that both parameters $a_0$ and $a_1$ are subject to parameter uncertainty (or at least one of them with the other parameter not vanishing). Intuitively, this means that the model may switch between a Vasi\v cek-like or CIR-like behaviour. In particular, when one is not able to restrict the state space a priori to $\R_{\ge 0}$, this non-linear model allows to incorporate both model approaches in a robust (i.e.~non-linear) sense.

In the interest rate markets in the early years after 2000, market participants believed in positive interest rates and thus favoured the CIR-model. The credit crises led to decreasing interest rates and the Vasi\v cek model came back, as it allows for negative interest rates. This effect is well-known and its implications for banking are quite important, see for example \cite{Carver2012,Patel2017,orlando2016,RussoFabozzi2017}.
In the near future, however, when interest rates may rise again one could be interested in deviating again from the Vasi\v cek-model. With the non-linear Vasi\v cek-CIR model such a switch is no longer necessary and one is able to behave consistently through such seemingly different time periods. 

More precisely, assume that $\underline a^0 >0$ and consider the state space $\ccO=\R$. Uniqueness for the non-linear PDE \eqref{eq:def:PDE} follows readily from Proposition \ref{thm:PDE-uniquenessI}. In this general case, there will be no explicit solutions like for example in Proposition \ref{prop:bondprices} above and we have to rely on numerical techniques.

Figure \ref{fig:VasicekCIR} illustrates the differences in option pricing of the non-linear Vasi\v cek-CIR model to both the Vasi\v cek and the CIR model. While in the non-linear CIR model, this price can still be computed explicitely, this no longer holds for the other two models.  
The parameters of the plot are chosen for best illustration purposes. 
The plot shows the (upper) price of a Call option under a non-linear affine model with the parameters stated in the plot. 
While the non-linear Vasi\v cek model puts clearly weight on $\R_{<0}$, the CIR model shifts the mass towards the right, leading to higher option values for $x > 0.3$. 

The graph further highlights the mentioned difficulty of the modeler to choose from one model: starting from positive values (i.e.~here $x> 0.3$), he possibly would prefer to choose the (classical/non-linear) CIR model, while for falling values prices become prohibitively low, illustrating the need to switch to the Vasi\v cek model. The non-linear Vasi\v cek-CIR model clearly does not suffer from this problem (it is clear by definition that the parameters of the non-linear Vasi\v cek-CIR model can be chosen to reproduce the option prices of the two other models).

\begin{figure}[h]

	 \begin{overpic}[width=12cm, trim=0 0 0 30, clip]{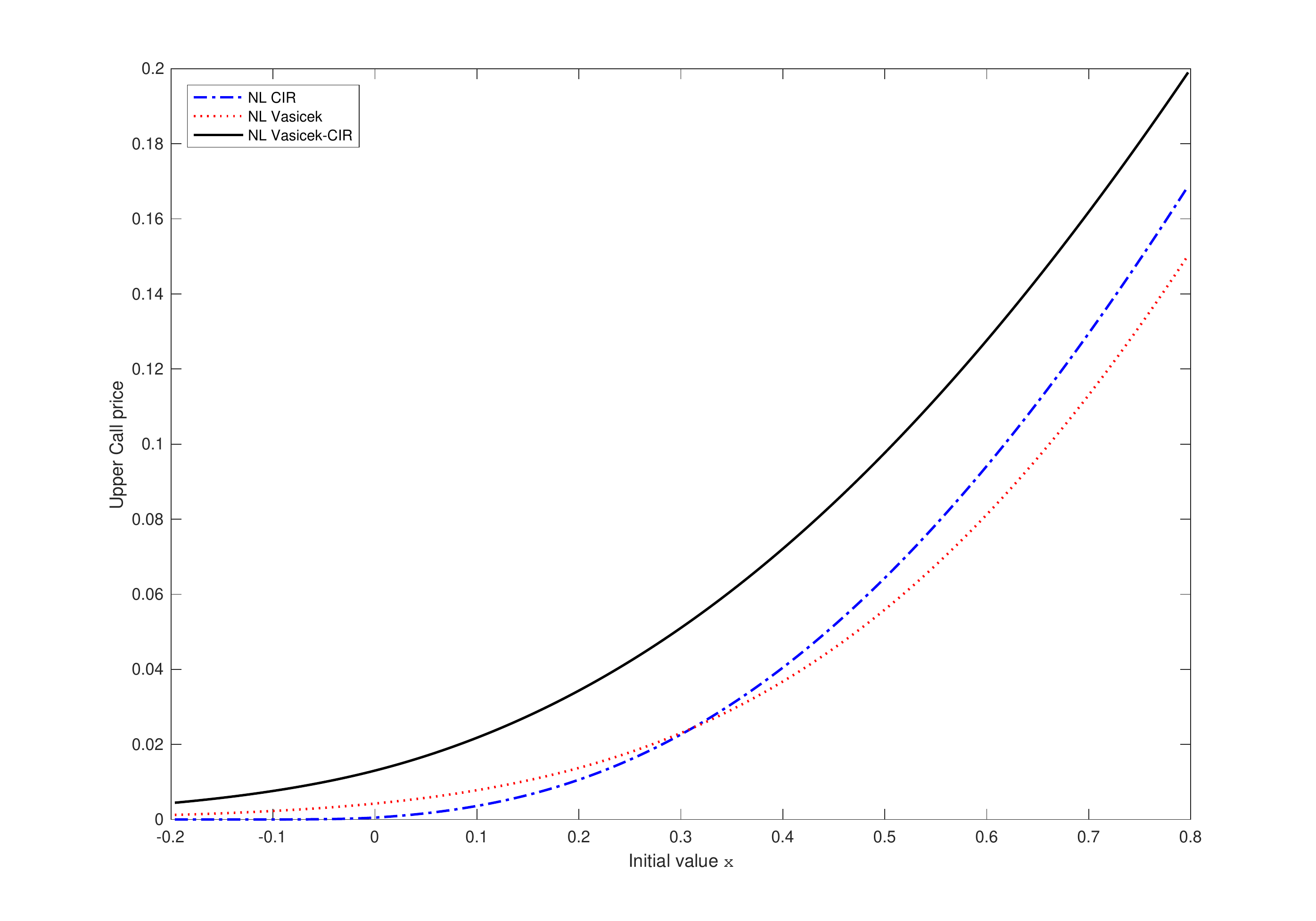} \end{overpic}  
	 .

\begin{tabular}{lrrrrrrrr} \toprule
Parameters & $\overline a_0$ & $\underline a_0$ & $\overline a_1$ &  $\underline a_1$ & $\overline b_0$  & $\underline b_0$ & $\overline b_1$ & $\underline b_1$ \\ \midrule
& 0.08 & 0.00 & 0.2 & 0.00 & 0.15 & 0.05 & -0.5  & \ -1  \\ \bottomrule
\end{tabular}

\caption{\label{fig:VasicekCIR} This figure shows the upper Call price for the non-linear Vasi\v cek, the non-linear CIR and the non-linear Vasi\v cek-CIR model. The first two models are obtained from the latter by simply letting $\bar a^1=0$ ($\bar a^0 = 0$, respectively). The Call price has strike $0.5$ and the parameters are given in the table above. }
\end{figure}

\section{Model risk}\label{sec:modelrisk}

In financial applications, model risk is an important factor for risk management. In the remarkable work \cite{cont-06}, a systematic framework for the management of model risk has been proposed which we recall shortly and thereafter apply to the non-linear affine models. The importance of this topic is illustrated by the intensive research in this area, see for example \cite{BannoerScherer13,GuillaumeSchoutens2012,BreuerCsiszar2016,BarrieuScandolo2015,FonsecaGrasselli2011} among many others.

In this approach, the market contains a number of benchmark instruments which are liquidly traded instruments and the observation consists in bid and ask prices thereof. Moreover, there is a set of arbitrage-free pricing models $\cQ$ which is consistent with the observations of the benchmark instruments.

In our framework, both can be described through a non-linear affine model: the non-linear affine model specifies a set $\cQ$ of pricing measures, as for example in the non-linear affine term-structure approach studied in Section \ref{sec.:ATSM}. Consistent bid and ask prices can be obtained by suprema and infima over these pricing measures, 
exactly as it was done for $\underline p(t,T)$ and $\bar p(t,T)$ in the previous section. 

A \emph{coherent} measure of model uncertainty for a payoff function $\psi:\R \to \R$ with $\psi(X_T)$ denoting the payoff at time $T$ can be computed from the upper and lower price bounds
\begin{align}
    \bar \pi(\psi) = \sup_{Q \in \cQ}E^Q[\psi(X_T)], \qquad \underline \pi(\psi) = \inf_{Q \in \cQ}E^Q[\psi(X_T)].
\end{align}
The measure of model uncertainty on the derivative $\psi$ is given by
$$\mu_\cQ(\psi) = \bar \pi(\psi) - \underline \pi(\psi). $$

Some examples are provided in \cite{cont-06}, including the non-linear Black-Scholes model. 
We illustrate the application of non-linear affine models in this framework with a short study of model risk in the non-linear Vasi\v cek model.

\subsection{The non-linear Vasi\v cek model}
As an example we consider the non-linear Vasi\v cek-model introduced in Section \ref{non-linearVasicek}.
 Recall that this model is characterized by the assumption that $[\underline a^1,\bar a^1]=\{0\}$. 
In the non-linear case, non-linear expectations are given by the solution of the non-linear Kolmogorov equation \eqref{eq:Kolmogorov}. Proposition \ref{prop:icx} allows to trace this solution back to existing solutions for affine models if the payoffs are increasing and convex (decreasing and concave), see Example \ref{ex:Heston}. For more general payoffs we rely on numerical methods which we illustrate now.

\subsubsection{Options} 
The model risk for options in this model is illustrated in the following two pictures, where we price a call and a butterfly. To construct the set $\Theta$, we take estimated parameter values together with their 95\% confidence intervals from the literature. The results is shown in Figure \ref{fig:Vasicekder}. While for the Call option the model risk increases monotonically with the initial value, the maximal model risk for the butterfly is attained for the initial value $x$ directly at the maximal payoff.

\begin{figure}[h]

\begin{minipage}{20cm}
\hspace{-1.1cm}
	 \begin{overpic}[width=7.5cm]{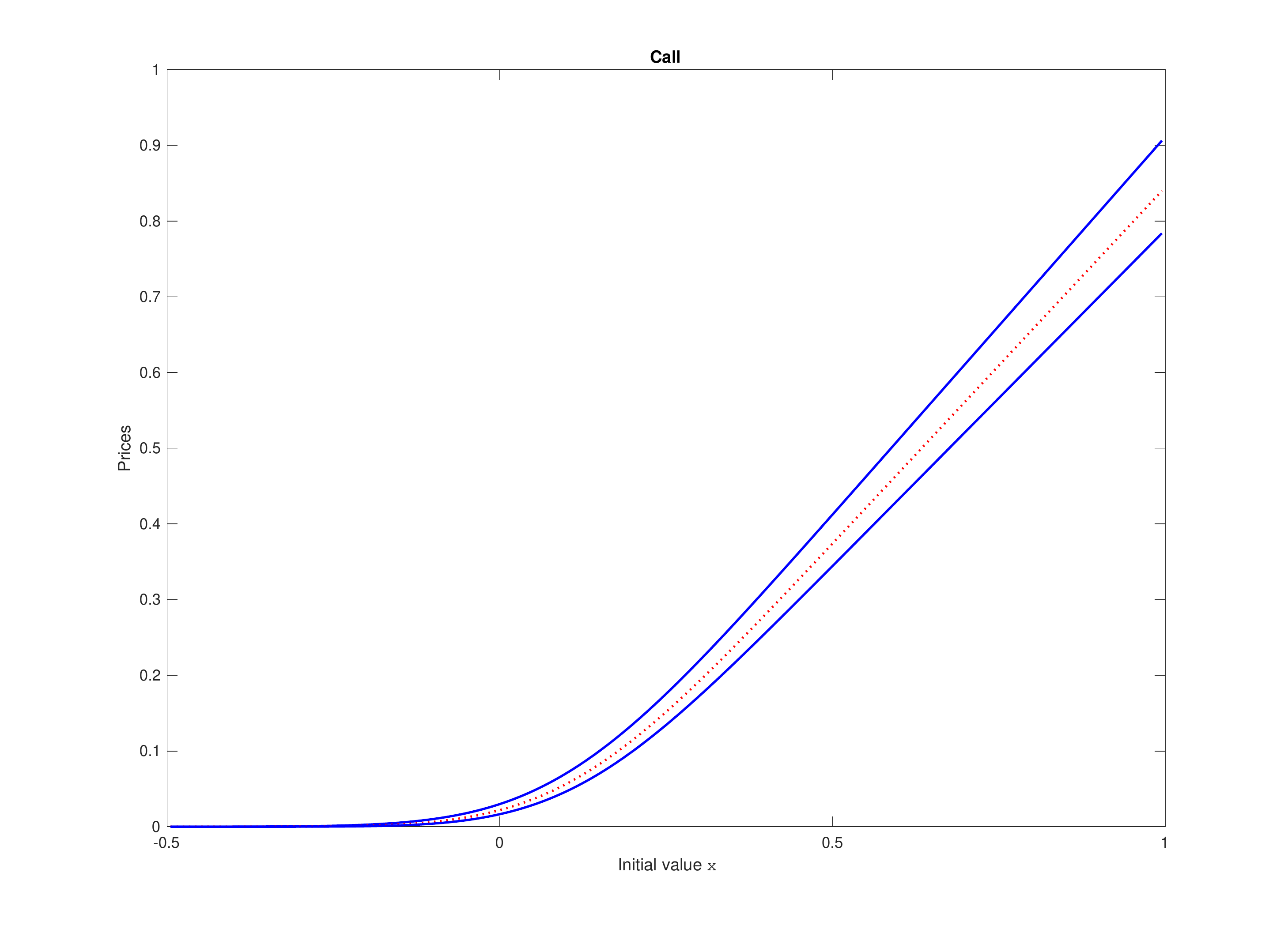} \end{overpic}  \hspace{-8mm}
	 \begin{overpic}[width=7.5cm]{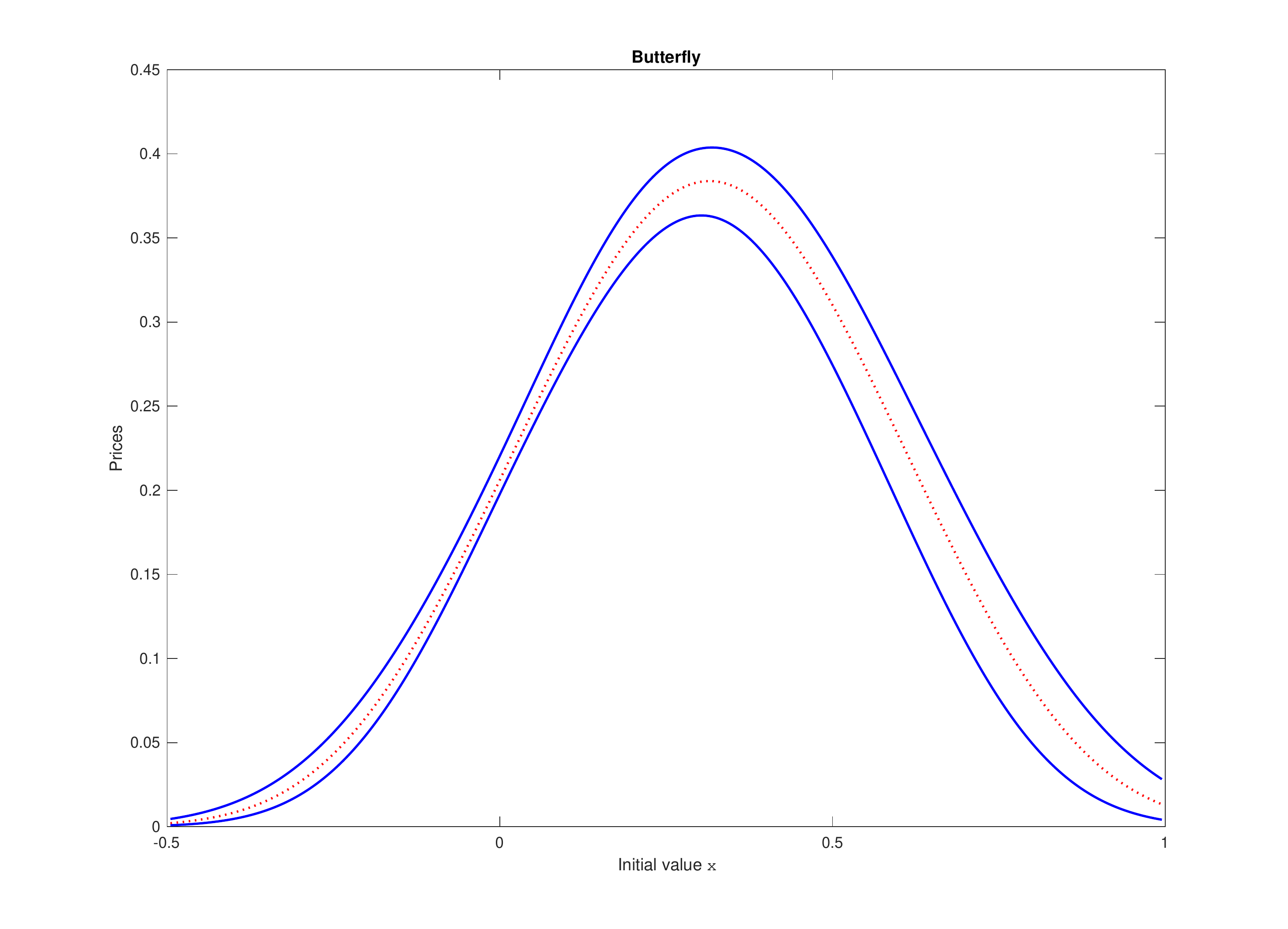} \end{overpic} 
\end{minipage}
\begin{tabular}{lrrrrrrrrr} \toprule
Parameters & $\overline a_0$ & $a_0$ & $\underline a_0$ & $\overline a_1$ & $a_1$ & $\underline a_1$ & $\overline b_0$ & $b_0$ & $\underline b_0$ \\\midrule
& 0.0003 & 0.003 & 0.017 & 0.00 & -0.06 & -0.11 & 0.026 & 0.023 & 0.019 \\ \bottomrule
\end{tabular}
\caption{\label{fig:Vasicekder} This figure shows the solution of the non-linear Kolmogorov equation for the non-linear Vasi\v cek-model with boundary condition $f(x)=(x-K)^+$ and $f(x)=  (x-K_1)^+ -2(x-K_2)^+ + (x-K_3)^+$, $K=0.1$, $K_1=-0.2$, $K_2=0.3$, and $K_3=0.8$. 
For each of the figures, the dashed lines show the solution of the Vasi\v cek model with no uncertainty. 
The upper and lower solid lines gives the upper and lower price bounds. The parameters for the computation are given in the table below the pictures.}
\end{figure}

\section{Conclusion}\label{sec:conclusion}
In this paper we introduced affine processes under uncertainty. This extends the existing class of non-linear L\'evy processes  to Markov processes where the interval for the parameter uncertainty may depend on the current state (in an affine way, however). We obtained a dynamic programming principle implying a non-linear Kolmogorov equation which can be used to price options in a fast and efficient way. Many existing models can be embedded into a setting with parameter uncertainty which we illustrate with a number of examples. However, the non-linear framework also allows for new model variations which did not exist in the classical approach and we illustrate this with a term-structure Vasi\v cek-CIR model, where the modeler does not need to  decide a priori if the state space should include negative rates or not, a strong restriction in existing models. 
The generalization to higher dimensions or to the case with jumps is left for future research. Here, we  concentrated on the conceptual introduction of state-dependent parameter uncertainty and chose the simplest but still highly interesting example for the illustration of our ideas.

\begin{appendix}
\section{Semimartingale comparison} \label{app:PO}
In this section we recall the main result in \cite{BergenthumRueschendorf2007} on comparison of semimartingales with Markov processes and show that the crucial \emph{propagation of order} (PO) property is satisfied for a large class of Markov processes, in particular for affine processes. To the best of our knowledge, existing results in the literature require Lipschitz assumptions on the coefficient, which will not hold in our case. 

As function class $\cF$ we will consider increasing and convex functions. For a real-valued Markov process $S^*$ and a terminal time $T$ we define the propagation operator
$$ \cG^g(t,x) = E[g(S^*_{T})|S^*_t=x]. $$

\begin{assumption}For some function class $\cF$ and some Markov process $S^*$ we say that PO$(S^*,\cF)$ holds if $\cG^g(t,\cdot) \in \cF$ for all $0 \le t \le T$ and for all $g \in \cF$.
\end{assumption}

Propagation of monotonicity and convexity follows in a very elegant way through total positivity of the transition densities of continuous Markov processes.   

Proposition 3.1 in \cite{Kijima2002} immediately yields the following result. 
\begin{proposition}\label{prop:i}
Assume that $S^*$ is a strong Markov process having continuous sample paths and that 
 $g$ is increasing (decreasing). Then $\cG^g(0,\cdot)$ is increasing (decreasing).
\end{proposition}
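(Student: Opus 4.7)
The plan is to derive the monotonicity of $x \mapsto \cG^g(0,x)$ from a pathwise coupling of two versions of $S^*$ started from ordered initial values, exploiting the continuity of the sample paths together with the strong Markov property. It suffices to treat the increasing case, as the decreasing case follows by applying the same argument to $-g$. Fix $x_1 \le x_2$: my aim is to construct, on a common probability space, two copies $S^{*,(1)}$ and $S^{*,(2)}$ of the process, with respective initial values $x_1$ and $x_2$, such that $S^{*,(1)}_T \le S^{*,(2)}_T$ almost surely. Monotonicity of $g$ then yields $g(S^{*,(1)}_T) \le g(S^{*,(2)}_T)$, and taking expectations gives $\cG^g(0,x_1) \le \cG^g(0,x_2)$.

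The coupling is built in two steps. I first take two independent copies $S^{*,(1)}$ and $S^{*,(2)}$ of $S^*$ on a product space, starting from $x_1$ and $x_2$ respectively, and introduce the stopping time
\[
\tau := \inf\{t \ge 0 : S^{*,(1)}_t \ge S^{*,(2)}_t\}, \qquad \inf \emptyset := +\infty.
\]
On $\{\tau > T\}$ the two paths are already ordered on $[0,T]$ and nothing further needs to be done. On $\{\tau \le T\}$, continuity of both paths together with the initial ordering $x_1 \le x_2$ forces $S^{*,(1)}_\tau = S^{*,(2)}_\tau$. Invoking the strong Markov property at $\tau$, I then replace $S^{*,(2)}$ on $[\tau,T]$ by the trajectory that $S^{*,(1)}$ follows on $[\tau,T]$ starting from the common value $S^{*,(1)}_\tau = S^{*,(2)}_\tau$. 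The modified process $\widetilde S^{*,(2)}$ still has the law of $S^*$ started at $x_2$, and by construction $S^{*,(1)}_t \le \widetilde S^{*,(2)}_t$ for all $t \in [0,T]$.

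The main obstacle is to make the last splicing step rigorous, i.e.\ to verify that the modified process $\widetilde S^{*,(2)}$ indeed retains the prescribed Markovian law: this requires a measurable selection of the transition kernels at the random time $\tau$ and is precisely what the Kijima--Karlin-type argument of Proposition~3.1 in \cite{Kijima2002} provides. An equivalent and perhaps cleaner reformulation proceeds via stochastic monotonicity: for a one-dimensional strong Markov process with continuous paths, the family of conditional laws $(\mathcal{L}(S^*_T \mid S^*_0 = x))_{x}$ is stochastically non-decreasing in $x$, which is equivalent to $x \mapsto E[g(S^*_T)\mid S^*_0 = x]$ being non-decreasing for every non-decreasing $g$. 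Either route ultimately rests on the fact that in one dimension continuity of paths prevents two ordered trajectories from crossing without first meeting, and the strong Markov property lets them be identified at the meeting time.
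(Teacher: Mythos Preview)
Your coupling argument is correct and is a standard alternative route to this result. The paper itself offers no proof beyond citing Proposition~3.1 in \cite{Kijima2002}; as the paragraph preceding the proposition indicates, Kijima's method goes through the total positivity of the transition density of a one-dimensional continuous strong Markov process: a totally positive kernel of order two satisfies the variation-diminishing property and in particular maps increasing functions to increasing functions. Your argument instead builds a pathwise-ordered coupling directly, using that continuous one-dimensional paths cannot cross without first meeting and then coalescing the two copies via the strong Markov property at the meeting time. Both routes are valid; the total-positivity approach yields the stronger likelihood-ratio ordering of the transition laws and dovetails naturally with the convexity proof in the subsequent proposition, whereas the coupling is more elementary and does not presuppose the existence of a transition density. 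One minor inaccuracy: you say the splicing step ``is precisely what the Kijima--Karlin-type argument of Proposition~3.1 in \cite{Kijima2002} provides,'' but Kijima's argument there is the total-positivity one, not a coupling construction, so your coupling should be regarded as self-contained rather than as completed by that reference.
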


For the propagation of convexity we need an additional step, because the considered processes in \cite{Kijima2002} are in fact martingales. The proof crucially uses the variation-diminishing property of totally positive functions.
\begin{proposition}\label{prop:cx}
Assume that $S^*$ is a strong Markov process with state space $S$ having continuous sample paths and that there exist $\pi_0,\ \pi_1 \in \R$ with $\pi_1 \neq 0$, such that 
\begin{align}\label{pi}
          E[S^*_{T}|S^*_0=x]=\pi_0 +  x \pi_1, \qquad x \in S.
\end{align}
Then for convex (concave) functions  $g$  it holds that $\cG^g(0,\cdot)$ is convex (concave).
\end{proposition}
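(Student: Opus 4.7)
The plan is to extend the monotonicity argument behind Proposition \ref{prop:i} from one sign change to two, by combining the total positivity of the transition kernel of $S^*$ (the property invoked in \cite{Kijima2002} via continuity of sample paths) with Karlin's variation-diminishing property, and then feeding in the affine-mean assumption at the appropriate place.

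First, I would recall the standard characterization: a function $f:S\to\R$ is convex if and only if, for every affine function $M(x)=\alpha+\beta x$, the difference $f-M$ has at most two sign changes on $S$, and if it has exactly two, then reading left to right the sign pattern is $+,-,+$. The concave case is dual, with pattern $-,+,-$. The next ingredient is that the transition kernel $p_T(x,dy)$ of the continuous, strong Markov process $S^*$ is totally positive in the sense needed for Karlin's variation-diminishing theorem (this is the same structural property, coming from the fact that continuous paths cannot overshoot without crossing, that drove Proposition~\ref{prop:i}). For a test function $h$, the transform $(Kh)(x):=\int h(y)\,p_T(x,dy)=\cG^h(0,x)$ has no more sign changes than $h$, and when the count is preserved the sign pattern is preserved.

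Now fix a convex $g$ and an arbitrary affine $M(x)=\alpha+\beta x$. The role of the assumption $\pi_1\neq 0$ is exactly that the map $L\mapsto \cG^L(0,\cdot)$ from affine functions in $y$ to affine functions in $x$ is surjective: choosing $\tilde\beta:=\beta/\pi_1$ and $\tilde\alpha:=\alpha-\pi_0\tilde\beta$, the affine $L(y):=\tilde\alpha+\tilde\beta y$ yields
\[
\cG^L(0,x)=\tilde\alpha+\tilde\beta(\pi_0+\pi_1 x)=\alpha+\beta x=M(x).
\]
Therefore $\cG^g(0,x)-M(x)=\cG^{g-L}(0,x)=K(g-L)(x)$. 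Since $g$ is convex, Step~1 says $g-L$ has at most two sign changes, and if two, the pattern is $+,-,+$. By Karlin's VDP, the same holds for $\cG^g(0,\cdot)-M$. As $M$ was arbitrary, the characterization of Step~1 applied in the reverse direction shows $\cG^g(0,\cdot)$ is convex. The concave case follows by replacing $g$ with $-g$.

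I expect the delicate point to be the variation-diminishing step at the sign-change level $\leq 2$, since this typically requires the kernel to be totally positive of order $\geq 3$; verifying this refined property (or finding a workaround, e.g.\ via a coupling/Strassen-type argument or an approximation of convex $g$ by differences of monotone pieces handled by Proposition~\ref{prop:i}) is the conceptual core. A secondary technical issue is integrability: for unbounded convex $g$ one would apply the argument to truncations $g_n := ((-n)\vee g)\wedge n$ or to $g\wedge (M_n)$ for a sequence of dominating affine functions, and then pass to the limit using monotone convergence.
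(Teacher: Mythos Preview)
Your proposal is correct and follows essentially the same route as the paper: both subtract an arbitrary affine function $M(x)=\alpha+\beta x$ from $\cG^g(0,\cdot)$, use the affine-mean hypothesis with $\pi_1\neq 0$ to rewrite this difference as $\int q_T(x,y)\{g(y)-L(y)\}\,dy$ for an affine $L$, and then invoke the variation-diminishing property of the totally positive kernel (i.e.\ the argument of Proposition~3.2 in \cite{Kijima2002}) to conclude. Your write-up is in fact more explicit than the paper's, which simply records the key identity and defers the remainder to Kijima; your flagged caveat about needing total positivity of order at least $3$ is precisely what underlies Kijima's two-sign-change argument.
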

\begin{proof}
We modify the first step in Proposition 3.2 in \cite{Kijima2002}. In this regard, note that Equation \eqref{pi} yields that
	\begin{align}\label{eq:x}
		x &= \frac{E[S^*_{T}|S^*_0=x] - \pi_0}{\pi_1}. 
	\end{align}
Moreover, we follow the notation in \cite{Kijima2002} and denote by $q_T(x,y)$ the transition density of the Markov process $S^*$, i.e.
	\begin{align}\label{eq:cG}
		\cG^g(0,x) = E[g(S^*_{T})|S^*_0=x] = \int_S g(y) \, q_T(x,y) dy. 
	\end{align}
Let $\tilde \alpha_1,~\tilde \alpha_2 \in \R$. From \eqref{eq:x} and \eqref{eq:cG} we obtain that
	\begin{align*} 
		\cG(0,x) - \tilde \alpha_1 x - \tilde \alpha_2 & =\int_\R q_{T}(x,y) \Big\{ g(y)- \tilde \alpha_1 x  - \tilde \alpha_2\Big\} dy \\
		&= \int_\R q_{T}(x,y) \Big\{ g(y)- \frac{\tilde \alpha_1}{\pi_1} y  + \frac{\tilde \alpha_1\pi_0}{\pi_1} -\tilde \alpha_2\Big\} dy. 
	\end{align*}
Since $ \frac{\tilde \alpha_1}{\pi_1} y  + \frac{\tilde \alpha_1\pi_0}{\pi_1} -\tilde \alpha_2=: \alpha_1 y + \alpha_2 $ is an affine function of $y$,  the claim follows by repeating the arguments in Proposition 3.2 in \cite{Kijima2002}.
\end{proof}

Consider in addition a different Markov process $S$, possibly on a different probability space and denote by
\begin{align*} 
	\cF_{icx}:=\{ f:\R \to \R, \text{increasing and convex} \}.
\end{align*}

A combination of Proposition \ref{prop:i} with \ref{prop:cx} yields the following result. 
\begin{proposition}\label{prop:PO}
Let $S^*$ be a strong and homogeneous Markov process with continuous sample paths and that    
\begin{align}
          E[S^*_{t}|S^*_0=x]=\pi_0(t) +  x \pi_1(t), \qquad 0 \le t \le T
\end{align} holds with $\pi_1(t)\neq 0$ for all $t \in [0,T]$.
Then PO$(S^*,\cF_{icx})$ holds.
\end{proposition}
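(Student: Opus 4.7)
The plan is to combine Propositions \ref{prop:i} and \ref{prop:cx} after first reducing the problem at time $t$ to a problem at time $0$ by means of homogeneity. Fix $g \in \cF_{icx}$ and $t \in [0,T]$. Because $S^*$ is a strong, time-homogeneous Markov process with continuous paths, we have
\begin{align*}
  \cG^g(t,x) \;=\; E[g(S^*_T)\mid S^*_t = x] \;=\; E[g(\tilde S^*_{T-t}) \mid \tilde S^*_0 = x],
\end{align*}
where $\tilde S^*$ is a copy of $S^*$ initialized at $x$; thus it suffices to show that $x \mapsto E[g(\tilde S^*_{T-t}) \mid \tilde S^*_0 = x]$ is increasing and convex.

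First I would treat monotonicity: since $g$ is increasing by hypothesis and $\tilde S^*$ satisfies the assumptions of Proposition \ref{prop:i} (strong Markov, continuous paths), that proposition applied with terminal time $T-t$ immediately yields that $x \mapsto E[g(\tilde S^*_{T-t}) \mid \tilde S^*_0=x]$ is increasing.

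Next I would address convexity using Proposition \ref{prop:cx} applied with terminal time $T-t$. The proposition requires that $E[\tilde S^*_{T-t}\mid \tilde S^*_0=x]$ be an affine function of $x$ with non-zero slope; this is exactly the standing hypothesis $E[S^*_{T-t}\mid S^*_0=x] = \pi_0(T-t) + x\,\pi_1(T-t)$ with $\pi_1(T-t)\neq 0$. Since $g$ is convex, Proposition \ref{prop:cx} then gives convexity of $x \mapsto E[g(\tilde S^*_{T-t})\mid \tilde S^*_0=x]$. Combining both properties yields $\cG^g(t,\cdot) \in \cF_{icx}$, which is the PO property.

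The only substantive point is the clean invocation of the homogeneous Markov property to shift the conditioning time to $0$; once that is done, Propositions \ref{prop:i} and \ref{prop:cx} do all of the actual work, so I do not expect any further obstacle. One should just be careful that the hypothesis on $\pi_1$ is assumed for all $s \in [0,T]$, so that in particular it applies to the relevant value $s = T-t$.
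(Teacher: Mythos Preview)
Your proposal is correct and follows essentially the same approach as the paper: use homogeneity of $S^*$ to rewrite $\cG^g(t,x)$ as an expectation at time $0$ with horizon $T-t$, then apply Proposition~\ref{prop:i} for monotonicity and Proposition~\ref{prop:cx} (with the affine-expectation hypothesis at time $T-t$) for convexity. Your write-up is in fact slightly cleaner than the paper's, since you make the time shift explicit once and for all before invoking the two propositions.
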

\begin{proof}
Let $g \in \cF_{icx}$. 
First, Proposition \ref{prop:i} yields that $\cG^g(0,\cdot)$ is increasing. As the choice of $T$ was arbitrary, we obtain that also $\cG^g (t,\cdot)$ is increasing by repeating the argument of Proposition \ref{prop:i} and using homogeneity of $S^*$. 

Second, Proposition \ref{prop:cx} yields that $\cG^g(0,\cdot)$ is convex. Denote 
$$ \cH^g(t,x) = E[g(S^*_{t})|S^*_0=x]. $$
Then, $\cG^g(t,x)=\cH^g(T-t,x)$ since the Markov process is homogeneous. Now Proposition \ref{prop:cx} yields that $\cH^g(t,\cdot)$ is convex since the choice of $T$ was arbitrary and the claim follows.
\end{proof}

We remark that for a continuous, one-dimensional affine process $X$ 
with $[\underline a^1,\bar a^1]=\{0\}$ or  $\ccO=\R_{>0}$, affinity of the expectation in \eqref{pi} is satisfied which follows 
directly from the exponential-affine structure of the Laplace transform $ E[e^{uX_t}|X_0=x]=\exp(\phi(t,u)+\psi(t,u) x)$. Indeed, an application of Theorem 13.2 in \cite{JacodProtter2004} yields that
$$ E[X_t|X_0=x] = \partial_u \phi(t,0)+ x \partial_u \psi(t,0), $$
where we use the explicit expressions for $\phi$ and $\psi$ from Section 10.3.2 in  \cite{Filipovic2009}.

\section{Comparison results} \label{app:COMP}
In this section we recall the comparison results from \cite{Amadori2007} in our notation. Again, the crucial point for this results is that Lipschitz assumptions on the full domain do not hold. Note that we only consider the one-dimensional, time-homogeneous case here, which simplifies the matter significantly. While minimization is the core topic of \cite{Amadori2007}, the financial applications mainly treat maximization, such that we concentrate on the maximization. The stated results follow from the original results by replacing $\psi$ with $-\psi$.

Fix the state space $\ccO=\R_{>0}$ and consider the controlled diffusion $X=X^\theta$
   \begin{align}\label{app:controlleddiffusion} 
      dX_s = b(X_s,\theta_s) ds + \sqrt{a(X_s,\theta_s)} dW_s, \quad s>t 
   \end{align}
with initial condition $X_t=x \in \ccO$. 
Our application will be in Proposition \ref{thm:PDE-uniquenessII}, which considers the non-linear CIR-modell. Since then $\underline a^0=\bar a^0=0$ we consider $\Theta = [\underline b^{0},\bar b^{0}] \times [\underline b^{1},\bar b^{1}] \times \{0\} \times [\underline a^{1}, \bar a^{1}] $ and an adapted process $(\theta_s)$ taking values in $\Theta$. Moreover,  for each $x\in \ccO$, $\theta\in \Theta$, $b(x,\theta)=b^0+b^1x$ and $\sqrt{a(x,\theta)}=\sqrt{a^1 x}$ which is not Lipschitz at $0$.

Now, fix a continuous function $\psi:\ccO \to \R$ and  define the value function $v:[0,T]\times \ccO\to \R$ by
\begin{equation*}
v(t,x):=\sup_{(\theta_s)} E_{x,t}[\psi (X_T)],
\end{equation*}
where the supremum ranges over all adapted processes $(\theta_s)$ taking values in $\Theta$, $X$ is the controlled diffusion satisfying \eqref{app:controlleddiffusion},  and $E_{t,x}$ refers to the conditional expectation conditioning on $X_t=x$.
The associated \emph{Hamilton-Jacobi-Bellman} equation is given in \eqref{eq:def:PDE}.

First, Assumption 1 in \cite{Amadori2007} holds. Indeed, note that the functions $f$ and $r$ therein equal to zero in our case, that the functions $b$ and $\sqrt{a}$ are Lipschitz-continuous on $[\epsilon,\infty)$ for all $\epsilon>0$ and all $\theta \in \Theta$. Moreover, let $\parallel \cdot \parallel_{C^{0,1}([\epsilon,\infty))}$ denote the Lipschitz\footnote{This is the H\"older coefficient for exponent $\alpha$  with $\alpha=1$, see Section 5.1 in \cite{evans10}.} coefficient on $[\epsilon,\infty)$, then clearly
$$ \sup \{ \parallel \sqrt{a(\cdot,\theta)} \parallel_{C^{0,1}([\epsilon,\infty))}: \theta \in \Theta \} < \infty,$$
and, similarily, for $b$ and all conditions of Assumption 1 hold. 

Second, Assumption 4 is  implied by the Feller condition $\underline b^0 \ge \nicefrac{\bar a^1} 2>0$. Indeed, Assumption 4 requires that 
$$ \limsup_{x \to 0} \sup_{\theta \in \Theta}\Big( \frac 1 x - \frac{2b(x,\theta)}{a(x,\theta)} \Big) < \infty. $$
Since $\underline b^0 \ge \nicefrac{\bar a^1} 2> 0$ implies that
\begin{align*}
   \frac 1 x - \frac{2b(x,\theta)}{a(x,\theta)} &= \frac 1 x \Big(1- \frac{2b^0}{a^1  }\Big) -\frac{2 b^1}{a^1}
       \le -\frac{2 b^1}{a^1},
\end{align*}
Assumption 4 also holds.
In our notation, the uniqueness result following immediately from the comparison principle given in Theorem 4 in \cite{Amadori2007} reads as follows. We refer to the Section \ref{sec:Kolmogorov} for the definitions of viscosity solutions, super- and subsolutions.

\begin{theorem}\label{thm:Amadori}
Assume that $\underline b^0 \ge \nicefrac{\bar a^1} 2>0$. Let $v$  be a continuous locally bounded viscosity solution  of \eqref{eq:def:PDE}.
\begin{enumerate}[(i)]
   \item Assume
   \begin{align} \label{amadoriI} 
      \sup_{\ccO \times [0,T]} \frac{|v(t,x)|}{1 + x} < \infty.
   \end{align}
   Let $u$ be a locally bounded viscosity solution satisfying \eqref{amadoriI}, then $u=v$.
   \item  Assume
   \begin{align}\label{amadoriII}
     & \limsup_{x \to \infty} \sup_{t \in [0,T]} \frac{|v(t,x)|}{x^2} = 0, \quad
        \lim_{x \to 0} \sup_{t \in [0,T]} \frac{|v(t,x)|}{h_\epsilon(x)} = 0.
  \end{align}
  Let $u$ be a locally bounded viscosity solution satisfying \eqref{amadoriII}, then $u=v$.
\end{enumerate}

\end{theorem}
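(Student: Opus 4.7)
The plan is to deduce the statement directly from Theorem~4 of \cite{Amadori2007}, whose applicability has in fact been partially argued already in the paragraphs preceding the statement. The remaining task is to verify that the two structural hypotheses of that theorem (Assumption~1 and Assumption~4 in the numbering of \cite{Amadori2007}) hold in our setting, and then to translate its conclusion into the two growth regimes \eqref{amadoriI} and \eqref{amadoriII}.

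For the regularity hypothesis (Assumption~1 of \cite{Amadori2007}), note that in our controlled diffusion \eqref{app:controlleddiffusion} we have $b(x,\theta)=b^0+b^1x$ and $\sqrt{a(x,\theta)}=\sqrt{a^1x}$, with the discount and running-cost functions in \cite{Amadori2007} set to zero. Since $\Theta$ is compact, both $|b^1|$ and $\bar a^1$ are uniformly bounded in $\theta$, so $b(\cdot,\theta)$ is globally Lipschitz with constant independent of $\theta$, and $\sqrt{a(\cdot,\theta)}$ is Lipschitz on each $[\epsilon,\infty)$ with constant $\tfrac{\sqrt{\bar a^1}}{2\sqrt{\epsilon}}$. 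This gives the uniform local Lipschitz bounds required.

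For the non-attainment hypothesis (Assumption~4 of \cite{Amadori2007}), I would compute
\begin{equation*}
\frac{1}{x}-\frac{2b(x,\theta)}{a(x,\theta)}
=\frac{a^1-2b^0}{a^1 x}-\frac{2b^1}{a^1}.
\end{equation*}
The Feller-type condition $\underline b^0\ge \nicefrac{\bar a^1}{2}>0$ ensures $a^1-2b^0\le \bar a^1-2\underline b^0\le 0$ uniformly in $\theta\in\Theta$, so the first term on the right-hand side is non-positive for every $\theta$ and every $x>0$. The second term is bounded uniformly in $\theta$ by $2|\underline b^1|/\underline a^1$ (where $\underline a^1>0$ because the family is proper). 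Hence $\limsup_{x\to 0}\sup_{\theta\in\Theta}\bigl(\tfrac{1}{x}-\tfrac{2b}{a}\bigr)<\infty$.

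With both assumptions verified, Theorem~4 of \cite{Amadori2007} yields a comparison principle between any sub- and supersolution in the appropriate growth class. Applying this to $u-v$ and to $v-u$ in turn gives $u=v$, proving part (i) under the linear-growth bound \eqref{amadoriI} and part (ii) under the sharper asymptotic bounds \eqref{amadoriII}. The main obstacle I anticipate is not a conceptual one but a notational one: ensuring that our Hamiltonian $G$ in \eqref{eq:def:PDE-generator} corresponds exactly to the HJB operator of the controlled diffusion \eqref{app:controlleddiffusion} as formulated in \cite{Amadori2007}, and that the viscosity solution concept used there agrees with the one in Section~\ref{sec:Kolmogorov} (which it does, since both are formulated with test functions touching from above/below and the equation has no jump term in one dimension).
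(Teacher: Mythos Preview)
Your proposal is correct and follows essentially the same route as the paper: verify Assumptions~1 and~4 of \cite{Amadori2007} (which the paper in fact does in the paragraphs immediately preceding the theorem) and then invoke the comparison principle of Theorem~4 therein twice---once with $u$ as subsolution and $v$ as supersolution, once with the roles reversed---to conclude $u=v$. One small inaccuracy worth flagging: your claim that ``$\underline a^1>0$ because the family is proper'' is not justified by the paper's definition of proper (which in the CIR case only gives $\bar a^1>0$); the uniform bound on $-2b^1/a^1$ does require $a^1$ to be bounded away from zero, but this is an implicit additional hypothesis shared with the paper's own verification rather than a consequence of properness.
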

\begin{proof}
Recall that a viscosity solution is a supersolution and a subsolution. Since $u$ and $v$ are both viscosity solutions, $u=v$ holds on $\ccO\times\{T\}$.
For (i), note that the conditions hold with $\gamma=1$ both for $u$ and $v$, such that applying Theorem 4 in \cite{Amadori2007} twice (once as a supersolution and once as a subsolution) yields  $u=v$ on $\ccO \times [0,T]$.
The claim (ii) follows similarly. 
\end{proof}

\end{appendix}

\end{document}